\setlist{itemsep=4pt, topsep=4pt}
\def\chaptermark#1{}%whatever
\def\chapter{%
  \if@openright\cleardoublepage\else\clearpage\fi
  \thispagestyle{plain}\global\@topnum\z@
  \@afterindenttrue \secdef\@chapter\@schapter}
\def\@chapter[#1]#2{\refstepcounter{chapter}%
  \ifnum\c@secnumdepth<\z@ \let\@secnumber\@empty
  \else \let\@secnumber\thechapter \fi
  \typeout{\chaptername\space\@secnumber}%
  \def\@toclevel{0}%
  \ifx\chaptername\appendixname \@tocwriteb\tocappendix{chapter}{#2}%
  \else \@tocwriteb\tocchapter{chapter}{#2}\fi
  \chaptermark{#1}%
  \addtocontents{lof}{\protect\addvspace{10\p@}}%
  \addtocontents{lot}{\protect\addvspace{10\p@}}%
  \@makechapterhead{#2}\@afterheading}
\def\@schapter#1{\typeout{#1}%
  \let\@secnumber\@empty
  \def\@toclevel{0}%
  \ifx\chaptername\appendixname \@tocwriteb\tocappendix{chapter}{#1}%
  \else \@tocwriteb\tocchapter{chapter}{#1}\fi
  \chaptermark{#1}%
  \addtocontents{lof}{\protect\addvspace{10\p@}}%
  \addtocontents{lot}{\protect\addvspace{10\p@}}%
  \@makeschapterhead{#1}\@afterheading}
\newcommand\chaptername{Chapter}
\def\@makechapterhead#1{\global\topskip 7.5pc\relax
  \begingroup
  \fontsize{\@xivpt}{18}\bfseries\centering
    \ifnum\c@secnumdepth>\m@ne
      \leavevmode \hskip-\leftskip
      \rlap{\vbox to\z@{\vss
          \centerline{\normalsize\mdseries
              \uppercase\@xp{\chaptername}\enspace\thechapter}
          \vskip 3pc}}\hskip\leftskip\fi
     #1\par \endgroup
  \skip@34\p@ \advance\skip@-\normalbaselineskip
  \vskip\skip@ }
\def\@makeschapterhead#1{\global\topskip 7.5pc\relax
  \begingroup
  \fontsize{\@xivpt}{18}\bfseries\centering
  #1\par \endgroup
  \skip@34\p@ \advance\skip@-\normalbaselineskip
  \vskip\skip@ }
\def\appendix{\par
  \c@chapter\z@ \c@section\z@
  \let\chaptername\appendixname
  \def\thechapter{\@Alph\c@chapter}}
\newcounter{chapter}
\newif\if@openright
\def\@cite#1#2{{\m@th\upshape\bfseries%
[{#1\if@tempswa{\m@th\upshape\mdseries, #2}\fi}]}}
\theoremstyle{plain}
\newtheorem{thm}{Theorem}[section]% subsection
\newtheorem{cor}[thm]{Corollary}
\newtheorem{ass}[thm]{Assumption}
\newtheorem{prop}[thm]{Proposition}
\newtheorem{lem}[thm]{Lemma}
\theoremstyle{definition}
\newtheorem{defn}[thm]{Definition}
\theoremstyle{remark}
\newtheorem{rem}[thm]{Remark}
\numberwithin{equation}{subsection}
\newcommand{\nc}{\newcommand}
\newcommand{\rnc}{\renewcommand}
\nc\bA{\mathbb{A}}
\nc\bB{\mathbb{B}}
\nc\bC{\mathbb{C}}
\nc\bD{\mathbb{D}}
\nc\bE{\mathbb{E}}
\nc\bF{\mathbb{F}}
\nc\bG{\mathbb{G}}
\nc\bH{\mathbb{H}}
\nc\bI{\mathbb{I}}
\nc{\bJ}{\mathbb{J}} 
\nc\bK{\mathbb{K}}
\nc\bL{\mathbb{L}}
\nc\bM{\mathbb{M}}
\nc\bN{\mathbb{N}}
\nc\bO{\mathbb{O}}
\nc\bP{\mathbb{P}}
\nc\bQ{\mathbb{Q}}
\nc\bR{\mathbb{R}}
\nc\bS{\mathbb{S}}
\nc\bT{\mathbb{T}}
\nc\bU{\mathbb{U}}
\nc\bV{\mathbb{V}}
\nc\bW{\mathbb{W}}
\nc\bY{\mathbb{Y}}
\nc\bX{\mathbb{X}}
\nc\bZ{\mathbb{Z}}
\nc\cA{\mathcal{A}}
\nc\cB{\mathcal{B}}
\nc\cC{\mathcal{C}}
\rnc\cD{\mathcal{D}}
\nc\cE{\mathcal{E}}
\nc\cF{\mathcal{F}}
\nc\cG{\mathcal{G}}
\rnc\cH{\mathcal{H}}
\nc\cI{\mathcal{I}}
\nc{\cJ}{\mathcal{J}} 
\nc\cK{\mathcal{K}}
\rnc\cL{\mathcal{L}}
\nc\cM{\mathcal{M}}
\nc\cN{\mathcal{N}}
\nc\cO{\mathcal{O}}
\nc\cP{\mathcal{P}}
\nc\cQ{\mathcal{Q}}
\rnc\cR{\mathcal{R}}
\nc\cS{\mathcal{S}}
\nc\cT{\mathcal{T}}
\nc\cU{\mathcal{U}}
\nc\cV{\mathcal{V}}
\nc\cW{\mathcal{W}}
\nc\cY{\mathcal{Y}}
\nc\cX{\mathcal{X}}
\nc\cZ{\mathcal{Z}}
\nc\bfA{\mathbf{A}}
\nc\bfB{\mathbf{B}}
\nc\bfC{\mathbf{C}}
\nc\bfD{\mathbf{D}}
\nc\bfE{\mathbf{E}}
\nc\bfF{\mathbf{F}}
\nc\bfG{\mathbf{G}}
\nc\bfH{\mathbf{H}}
\nc\bfI{\mathbf{I}}
\nc{\bfJ}{\mathbf{J}} 
\nc\bfK{\mathbf{K}}
\nc\bfL{\mathbf{L}}
\nc\bfM{\mathbf{M}}
\nc\bfN{\mathbf{N}}
\nc\bfO{\mathbf{O}}
\nc\bfP{\mathbf{P}}
\nc\bfQ{\mathbf{Q}}
\nc\bfR{\mathbf{R}}
\nc\bfS{\mathbf{S}}
\nc\bfT{\mathbf{T}}
\nc\bfU{\mathbf{U}}
\nc\bfV{\mathbf{V}}
\nc\bfW{\mathbf{W}}
\nc\bfY{\mathbf{Y}}
\nc\bfX{\mathbf{X}}
\nc\bfZ{\mathbf{Z}}
\newcommand{\xra}{\mathop{\longrightarrow}^}
\nc{\dmo}{\DeclareMathOperator}
\nc{\wt}{\widetilde}
\rnc{\Re}{\operatorname{Re}}
\rnc{\Im}{\operatorname{Im}}
\rnc{\span}{\operatorname{span}}
\dmo{\rank}{rank}
\dmo{\End}{End}
\dmo{\Hom}{Hom}
\dmo{\Jac}{Jac}
\dmo{\Id}{Id}
\dmo{\Ann}{Ann}
\dmo{\Area}{Area}
\dmo{\CP}{\bC P^1}
\dmo{\rk}{rk}
\dmo{\rel}{rel}
\dmo{\ra}{\rightarrow}
\dmo{\AGL}{\mathrm{AGL}}
\dmo{\AO}{\mathrm{AO}}
\dmo{\Sym}{\mathrm{Sym}}
\dmo{\Hur}{\mathrm{Hur}}
\rnc{\Col}{\operatorname{Col}}
\nc{\ColOne}{\Col_{\bfC_1}}
\nc{\ColOneX}{\ColOne(X,\omega)}
\nc{\ColTwo}{\Col_{\bfC_2}}
\nc{\ColTwoX}{\ColTwo(X,\omega)}
\nc{\ColThree}{\Col_{\bfC_3}}
\nc{\ColThreeX}{\ColThree(X,\omega)}
\nc{\ColOneTwo}{\Col_{\bfC_1, \bfC_2}}
\nc{\ColOneTwoX}{\ColOneTwo(X,\omega)}
\nc{\ColOneThree}{\Col_{\bfC_1, \bfC_3}}
\nc{\ColOneThreeX}{\ColOneThree(X,\omega)}
\nc{\MOne}{\cM_{\bfC_1}}
\nc{\MTwo}{\cM_{\bfC_2}}
\nc{\MOneTwo}{\cM_{\bfC_1, \bfC_2}}
\nc{\MThree}{\cM_{\bfC_3}}
\nc{\MOneThree}{\cM_{\bfC_1, \bfC_3}}
\dmo{\For}{\cF}
\nc{\GL}{\mathrm{GL}^+(2, \bR)}
\renewcommand{\color}[1]{\unskip}
\title{Hurwitz-Hecke Invariant Subvarieties}
\author[Apisa]{Paul~Apisa}
\begin{document}
\maketitle
% removes page number from first page
%\thispagestyle{empty}

\begin{abstract}
We introduce a construction of affine invariant subvarieties in strata of translation surfaces whose input is purely combinatorial. We then show that this construction can be used to construct the Bouw-M\"oller Teichm\"uller curves and the seven Eskin-McMullen-Mukamel-Wright rank two orbit closures. The construction is based on the theory of Hurwitz spaces and is inspired by work of Delecroix, Rueth, and Wright.
\end{abstract}

%%%%%%%%%%%%%%%%%%% 
% TABLE OF CONTENTS
%%%%%%%%%%%%%%%%%%%
% allows subsections (depth 1) to be displayed in table of contents
%\setcounter{tocdepth}{1} 
%\tableofcontents

\section{Introduction}

Let $\cM_{g,n}$ be the moduli space of genus $g$ Riemann surfaces with $n$ distinct marked points. The cotangent bundle $\cQ_{g,n}$ of $\cM_{g,n}$ is the bundle whose fiber over a Riemann surface $X \in \cM_{g,n}$ is the space of quadratic differentials on $X$ whose only poles occur at marked points and have order at most $1$. This bundle admits an action of $\mathrm{GL}(2, \bR)$ that is generated by the action of Teichm\"uller geodesic flow and the action of $\bC^\times$ on the fibers. This action preserves the stratification of $\cQ_{g,n}$ given by prescribing the number of zeros of the quadratic differentials and their orders of vanishing. 

A miraculous result of Eskin-Mirzakhani \cite{EM} and Eskin-Mirzakhani-Mohammadi \cite{EMM} says that, given a Riemann surface $Y$ with a nonzero quadratic differential $q$, its orbit closure $\overline{\mathrm{GL}(2, \bR) \cdot (Y,q)}$ in the stratum containing it is defined by linear equations in period coordinates. Filip \cite{Fi2, Fi1} showed that these orbit closures are in fact varieties. To each $\mathrm{GL}(2, \bR)$-invariant subvariety, henceforth simply \emph{invariant subvariety}, there are three natural invariants called rank, rel, and the field of definition. These invariants, which were introduced by Wright \cite{Wcyl, Wfield}, will not play a significant role in the sequel, but will occasionally be mentioned (see Apisa-Wright \cite[Sections 3.1-3.2]{ApisaWrightDiamonds} for definitions). 

Apart from components of strata themselves, loci of covers furnish further examples of invariant subvarieties. For instance, if $f: X \ra Y$ is a holomorphic branched cover and $(Y, q)$ is a quadratic differential with every branch point marked, then $(X, f^*q)$ will have an orbit closure of the same dimension as that of $(Y, q)$. The orbit closure of $(X, f^*q)$ will be called a \emph{branched covering construction} or \emph{imprimitive}. In order to work exclusively with abelian differentials, we will make an exception to this definition, and declare that the orbit closure of $(X, f^*q)$ is not imprimitive if $f$ is the holonomy double cover of $Y$ determined by $q$ and $(Y, q)$ has a primitive orbit closure. We will only be interested in \emph{primitive} invariant subvarieties in the sequel. 

One source of examples of primitive invariant subvarieties are those contained in rank two degree\footnote{The \emph{degree} of an invariant subvariety is the degree of its field of definition as an extension of $\bQ$.} one invariant subvarieties, such as strata in genus two or Prym varieties. Calta \cite{Ca} and McMullen \cite{Mc, Mc6} produced infinitely many primitive orbit closures in strata in genus two and McMullen \cite{Mc2} produced infinitely many in Prym loci. McMullen also classified orbit closures \cite{Mc6, McM:spin, Mc4, Mc5} in genus two strata. Eskin-Filip-Wright \cite{EFW} showed, conversely, that there are always infinitely many rank one primitive invariant subvarieties contained in every primitive rank two degree one invariant subvariety. 

The list of currently known invariant subvarieties of strata of abelian differentials that are primitive and not rank one subvarieties contained in a rank two degree one subvariety is short. It is,
\begin{enumerate}
    \item an infinite collection of closed $\mathrm{GL}(2, \bR)$-orbits called the \emph{Bouw-M\"oller family}, discovered by Bouw and M\"oller \cite{BM},
    \item seven rank two invariant subvarieties discovered by McMullen-Mukamel-Wright \cite{MMW} and Eskin-McMullen-Mukamel-Wright \cite{EMMW}, and
    \item two ``sporadic" closed orbits discovered by Kenyon-Smillie \cite{KS} and Vorobets \cite{Vo}.
\end{enumerate}

In Section \ref{S:HHConstructions}, inspired by ideas of Delecroix-Rueth-Wright \cite{DRW} and using the theory of Hurwitz spaces, we will introduce a construction called the \emph{Hurwitz-Hecke construction} for constructing invariant subvarieties and use the construction to provide a new proof of results of Bouw-M\"oller \cite{BM} and Eskin-McMullen-Mukamel-Wright \cite{EMMW}, namely,

\begin{thm}\label{T:Main}
The Bouw-M\"oller family and Eskin-McMullen-Mukamel-Wright examples are invariant subvarieties.
\end{thm}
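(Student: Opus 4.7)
The plan is to exhibit, family by family, each of the listed examples as the output of the Hurwitz-Hecke construction of Section \ref{S:HHConstructions} applied to explicit combinatorial data. Since the construction produces affine invariant subvarieties by design, this immediately yields the theorem. The real content is therefore in the identification, not in the invariance.

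For the Bouw-M\"oller family, I would begin with their description as Teichm\"uller curves coming from quotients of triangle groups, which presents each generating translation surface as a branched cover of $\bC P^1$ whose monodromy takes values in a specific finite group. Translating this into the input of a Hurwitz space --- a finite group, a tuple of conjugacy classes, and the combinatorial type of the cover --- is essentially a repackaging. The substantive step is to identify an appropriate Hecke correspondence on this Hurwitz space and verify that the Bouw-M\"oller surface lies in its invariant locus; since these surfaces generate closed $\mathrm{GL}(2,\bR)$-orbits, a dimension count then forces the Hurwitz-Hecke locus to coincide with the Teichm\"uller curve. For the seven Eskin-McMullen-Mukamel-Wright examples, the strategy is the same, carried out one example at a time. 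Each rank-two orbit closure is already presented in \cite{MMW, EMMW} as a locus of covers carrying prescribed automorphisms, and this data translates directly to a Hurwitz space equipped with Hecke operators encoding the symmetries; I expect the seven cases to share enough structure that the verifications can be organized uniformly, with small case-by-case combinatorial input.

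The main obstacle is proving equality, rather than mere inclusion, between the Hurwitz-Hecke locus and the known orbit closure. That the named examples sit inside a Hurwitz-Hecke locus is the easy direction once the combinatorial input is chosen correctly; the reverse inclusion --- that no strictly larger $\mathrm{GL}(2,\bR)$-invariant subvariety has crept in --- is more delicate. In the Bouw-M\"oller case this will follow from a dimension count, since closed orbits are minimal. In the rank-two EMMW cases, the cleanest route is again a dimension count combined with the rank, rel, and field-of-definition invariants of Wright \cite{Wcyl, Wfield}, all of which should be readable off the combinatorial data and can be used to rule out an intermediate invariant subvariety between the named example and the Hurwitz-Hecke locus.
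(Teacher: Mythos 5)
There is a genuine gap, and it is twofold. First, you locate ``the real content'' in the identification step and treat the Hurwitz--Hecke construction as producing invariant subvarieties ``by design.'' It does not: Proposition \ref{P:HHConstructionCriterion} only yields an invariant subvariety after one verifies Hodge compatibility, real (or real number field) definition of the relevant local system, control of the zero orders of the eigenforms, and the numerical inequality involving $d_1$ and $d_2$. Verifying these for the specific combinatorial data is where most of the work of the paper lies: the genus and order-of-vanishing computations (Lemma \ref{L:BouwMollerGenus}, Corollary \ref{C:MainOrderOfVanishingComputation}), the $d_2$ estimates culminating in the order-of-vanishing game of Theorem \ref{T:Algorithm2}, and above all the proof that $d_1=0$, which goes through the boundary analysis of admissible covers and an Ellenberg-type finite-fiber criterion based on the self-intersection number of the Hecke correspondence (Proposition \ref{P:D1Estimate}, Lemmas \ref{L:FiniteFibers} and \ref{L:IntersectionNumber}). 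Indeed, for two of the EMMW examples, $(1,1,2,8)$ and $(1,1,2,2,12)$, that intersection-number criterion fails and a separate argument (Lemma \ref{L:NewD1Estimate}) is required; your proposal gives no mechanism to detect or handle this.

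Second, your identification step is circular relative to what the theorem is meant to establish. You invoke that the Bouw--M\"oller surfaces ``generate closed $\mathrm{GL}(2,\bR)$-orbits'' and that the EMMW loci are rank-two orbit closures ``already presented in \cite{MMW, EMMW}''; these are precisely the results of \cite{BM} and \cite{EMMW} that Theorem \ref{T:Main} is reproving, so they cannot be inputs. Likewise, using rank, rel, and field-of-definition invariants of the ``named example'' to exclude intermediate invariant subvarieties presupposes the named example is itself an invariant subvariety. The paper avoids this: it first proves the constructed locus is a Teichm\"uller curve, respectively a rank $m-2$ invariant subvariety (Propositions \ref{P:BouwMoller}, \ref{P:EMMW}, \ref{P:EMMW2}), and only then identifies it --- for Bouw--M\"oller via Wright's Hurwitz-space characterization of $T(N,M)$ \cite[Theorem 1.2]{Wright-BouwMoller} and an explicit matching of monodromy data, and for the polygonal examples via the lower bound on rank and rel of unfolding orbit closures from Mirzakhani--Wright \cite[Lemma 7.1]{MirWri2} (plus a degeneration argument for $(1,1,2,2,12)$), neither of which uses the Bouw--M\"oller or EMMW theorems.
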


We will now describe a specific case of the Hurwitz-Hecke construction, which is already sufficient to produce the Bouw-M\"oller family. We will use the following notation in the sequel, if $g$ is an element of a group that acts on a vector space $V$, then $\mathrm{Fix}_{g}(V) := \{ v \in V : g \cdot v = v\}$ and $|g|$ will be the order of $g$. For $n \in \bZ$, set $\zeta_n := \mathrm{exp}\left( \frac{2 \pi i }{n} \right)$. 

\begin{thm}\label{T:Main2}
Let $A$ be a finite abelian group. Let $H$ be a subgroup of $\mathrm{Aut}(A)$. Set $G = A \rtimes H$. The characters of $A$ can be written as a union of $H$-orbits $\bigcup_j \Phi_j$. Let $M_j$ be the span of $\Phi_j$ in $\bC[A]$. Let $\sigma = (g_1, \hdots, g_n)$ be a generating set of $G$ so that $g_1\cdot \hdots \cdot g_n = 1$ and so $g_1, \hdots, g_\ell$ belong to $A$ for some positive integer $\ell$. If $\Phi_j$ only contains the trivial character, set $m_j = 0$. Otherwise, set
\[ m_j := (n-2) |\Phi_j| - \sum_{i=1}^n \dim \mathrm{Fix}_{g_i}(M_j) \quad \text{and} \quad \rho_j := \sum_{i=1}^\ell \dim \mathrm{Fix}_{g_i}(M_j). \]
Suppose that $\Phi_1$ is closed under complex conjugation, $m_1 \ne 0$, and 
\[ n - \frac{m_1}{2} - \rho_1 - 3 = 0. \]
Suppose that there is an element $a \in A$ whose stabilizer in $H$ is trivial and so 
\[  2 < \sum_i m_i \left| \frac{1}{|\Phi_i|} \sum_{\chi \in \Phi_i} \chi(a)\right|^2.  \]
Let $\kappa_i$ be the smallest nonnegative integer so that $\zeta_{|g_i|}^{\kappa_i+1} = \chi(g_i)$ for some $\chi \in \Phi_1$. Suppose that 
\[ 2+\sum_{i=1}^{\ell} \frac{|A|}{|g_i|} \kappa_i = \sum_j m_j. \]
% Set $\kappa = (\kappa_1^{|A|/|g_1|}, \hdots, \kappa_\ell^{|A|/|g_\ell|})$, with superscripts denoting multiplicity. Suppose that the sum of entries in $\kappa$ is $\sum_i m_i$.
Then there is a rank $\frac{m_1}{2}$ rel $\rho_1$ invariant subvariety in $\cH(\kappa_1^{|A|/|g_1|}, \hdots, \kappa_\ell^{|A|/|g_\ell|})$ whose field of definition is the extension of $\bQ$ generated by $\left\{ \sum_{\chi \in \Phi_1} \chi(a) : a \in A \right\}$.
\end{thm}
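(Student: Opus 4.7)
The plan is to realize the claimed subvariety as the image of a natural map from a Hurwitz space. Let $\Hur(G, \sigma)$ denote the moduli space of Galois $G$-covers $f \colon X \to \bP^1$ with $n$ branch points and local monodromies $g_1, \ldots, g_n$. The hypotheses that $\sigma$ generates $G$ and satisfies $g_1 \cdots g_n = 1$ make $\Hur(G, \sigma)$ nonempty with connected generic cover, of complex dimension $n-3$ after dividing by $\mathrm{PSL}(2, \bC)$. Over this base sits a universal family whose fiberwise Hodge bundle decomposes under the $G$-representation $\bC[A] = \bigoplus_j M_j$. The translation surfaces will live on the quotient $X/H$, and the relevant differentials arise from the $M_1$-isotypic component of $H^0(X, \Omega_X^1)$; the hypothesis that $\Phi_1$ is closed under complex conjugation supplies the real structure needed to obtain honest abelian differentials.

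The core representation-theoretic step is a Chevalley-Weil computation. Using the local normal forms of the $G$-cover at each branch point together with the holomorphic Lefschetz formula, one would verify that
\[
\dim \mathrm{Hom}_G\bigl(M_j, H^0(X, \Omega_X^1)\bigr) = \frac{m_j}{2},
\]
so that $m_j = (n-2)|\Phi_j| - \sum_i \dim \mathrm{Fix}_{g_i}(M_j)$ emerges as exactly the Riemann-Hurwitz contribution on the $M_j$-piece. The quantity $\rho_1$ is the dimension of relative-period directions coming from the first $\ell$ branch points, whose monodromies lie in $A$ and whose preimages in $X/H$ become the zeros of the resulting abelian differential. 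The numerical hypothesis $2 + \sum_{i=1}^\ell \frac{|A|}{|g_i|}\kappa_i = \sum_j m_j$ is Riemann-Hurwitz on the total space and forces the construction into the stratum $\cH(\kappa_1^{|A|/|g_1|}, \ldots, \kappa_\ell^{|A|/|g_\ell|})$: the exponent $|A|/|g_i|$ counts preimages in $X/H$, while the defining formula $\zeta_{|g_i|}^{\kappa_i+1} = \chi(g_i)$ reads off the local vanishing order from the eigenvalue of $g_i$ on $M_1$.

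With these pieces in place, I would finish as follows. The $(n-3)$ moduli of branch points combined with a choice of section of the $M_1$-piece of dimension $\frac{m_1}{2} + \rho_1$ yields a total of $n - 3 + \frac{m_1}{2} + \rho_1$ parameters, which by the hypothesis $n - \frac{m_1}{2} - \rho_1 - 3 = 0$ matches $m_1 + \rho_1$, exactly the dimension of a rank-$\frac{m_1}{2}$ rel-$\rho_1$ invariant subvariety. The $\mathrm{GL}(2, \bR)$-action on $\bP^1$ equipped with a fixed meromorphic differential lifts to the universal family by pullback, and since the $M_1$-projection is equivariant, the image is $\mathrm{GL}(2, \bR)$-invariant. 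The strict inequality $2 < \sum_i m_i \bigl|\frac{1}{|\Phi_i|}\sum_{\chi \in \Phi_i} \chi(a)\bigr|^2$ at some $a \in A$ with trivial $H$-stabilizer serves to rule out the locus being contained in a smaller branched covering construction, i.e.\ to certify primitivity. The field of definition is then identified by letting $\mathrm{Gal}(\overline{\bQ}/\bQ)$ act on characters of $A$ through its cyclotomic quotient: this action permutes the $H$-orbits $\Phi_j$ with fixed field generated by the $H$-orbit sums $\sum_{\chi \in \Phi_1} \chi(a)$. The principal obstacle, and the reason the Hurwitz-Hecke framework is needed, is verifying that singling out the $M_1$-isotypic piece uniformly across the Hurwitz space cuts out genuinely linear equations in period coordinates rather than merely an algebraic subvariety; this compatibility of the $G$-decomposition with the Gauss-Manin connection is exactly what the Hecke correspondence language developed in \cite{DRW} is designed to handle.
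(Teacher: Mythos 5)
Your setup (a Hurwitz space of $G$-covers, the isotypic decomposition of $H^1$ of the quotient $Y = X/H$, a Chevalley--Weil count giving $m_j$, and the Riemann--Hurwitz numerics placing the forms in the stratum) matches the paper's, but the two steps that actually make the theorem true are missing or wrong. First, the invariance step: $\mathrm{GL}(2,\bR)$ does not act on $\bP^1$ preserving the family --- the covers $Y \ra \bP^1$ are not translation covers and $\eta$ is not pulled back from the base --- so ``lifting the action by pullback'' is not available, and the issue you defer at the end (that the $M_1$-piece cuts out \emph{linear} equations in period coordinates) is precisely the crux, not a technicality. The paper resolves it with the McMullen--Mukamel--Wright criterion (Theorem \ref{T:MMW-Criterion}): the image of the family is constructible, and at each of its points the fiber $(M_1^H)^{m_1+\rho_1} \subseteq H^1(Y,\Sigma_{sp};\bC)$ is a subspace of the correct dimension containing the form and defined over a real number field. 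Establishing that real definability is exactly where Proposition \ref{P:RMConstants} (the field $\bQ(S_1)$, real because $\Phi_1$ is conjugation-closed) enters; your proposal never uses this, and without it no linearity conclusion follows.

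Second, you misread the hypothesis that some $a \in A$ has trivial $H$-stabilizer with $2 < \sum_i m_i \bigl| \frac{1}{|\Phi_i|}\sum_{\chi \in \Phi_i}\chi(a)\bigr|^2$: it is not a primitivity certificate (the theorem claims no primitivity). It is what forces $d_1 = 0$, i.e.\ that the map from the family to $\Omega\cM_{g_Y}$ is generically finite, without which your closing parameter count only bounds the dimension of the image from above, and the MMW criterion (which needs dimension at least $m_1+\rho_1$) cannot be applied. In the paper this is Ellenberg's mechanism: $|HaH| = |H|^2$ and the displayed inequality make the self-intersection of the Hecke correspondence $D_{HaH} \subseteq Y \times Y$ negative (Lemma \ref{L:IntersectionNumber}), whence $\mathrm{Hur}_\sigma \ra \cM_{g_Y}$ has finite fibers (Theorem \ref{T:FiniteFibersCriterion}, Corollary \ref{C:FiniteFibersCriterion}, Lemma \ref{L:FiniteFibers}), feeding into Proposition \ref{P:D1Estimate}. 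Two smaller points: the family has $n-3+\frac{m_1}{2}$ parameters ($\rho_1$ does not contribute holomorphic sections), which equals $m_1+\rho_1$ under your hypothesis --- your count $n-3+\frac{m_1}{2}+\rho_1$ is off; and pinning down the stratum requires not just Riemann--Hurwitz but the eigenvalue argument of Corollary \ref{C:MainOrderOfVanishingComputation} (every form in the $M_1$-piece vanishes to order at least $\kappa_i$ at \emph{every} preimage of the $i$th branch point) combined with $\sum_i \frac{|A|}{|g_i|}\kappa_i = 2g_Y-2$ to exclude zeros elsewhere, which also gives $d_2 = 0$; you gesture at this but do not prove it.
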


%If the stabilizer of a is trivial, then hah' = a iff h = h' = id since letting a' = hah^{-1} we have a = a'hh' and so a' = a and hh'= id and h stabilizes a. 

The general Hurwitz-Hecke construction is presented in Proposition \ref{P:HHConstructionCriterion}, but, unlike Theorem \ref{T:Main2}, it is not purely combinatorial. A more general combinatorial construction of invariant subvarieties than Theorem \ref{T:Main2} will be presented in Theorem \ref{T:Algorithm2}. We defer its statement. Theorem \ref{T:Algorithm2} can be used to construct the examples of \cite{EMMW}. Its inputs are (1) an abelian group $A$, (2) a subgroup $H$ of its automorphism group, (3) a character of $A$, and (4) a generating set of $A \rtimes H$ satisfying combinatorial conditions. 

\vspace{1mm}

\noindent \textbf{Acknowledgements.} The author thanks Alex Wright for extensive, detailed, and helpful conversations
throughout the process of writing this paper. The author also thanks Jordan Ellenberg, Vincent Delecroix, Julian R\"uth, and Johannes Schwab for useful conversations. The author was partially supported by NSF Grant DMS Award No. 2304840. 

\section{Hurwitz-Hecke Invariant Subvarieties}\label{S:HHConstructions}

Suppose that $G$ is a finite subgroup of the symmetric group that contains a transitive subgroup $A$ with trivial point stabilizer. We will identify $A$ with the set upon which $G$ acts and let $H$ be the stabilizer of the identity. The permutation representation of $G$ is then an action on the group algebra $\bC[A]$. Let $\bC[A] = \bigoplus_{i} M_i^{n_i}$ be its decomposition into $G$-irreducible representations $M_i$, which occur with multiplicity $n_i$. 
%%%%%%%%%%%%%%%%
%
% DO NOT DELETE
%
%%%%%%%%%%%%%%%%%
%This permutation representation is faithful since we have supposed that $G$ is a subgroup of the symmetric group. Equivalently, we could have taken $G$ to be arbitrary and then supposed that $\{\mathrm{id}\} = \bigcap_{a \in A} aHa^{-1}$.
%%%%%%%%%%%%%%%%
%
% END DO NOT DELETE
%
%%%%%%%%%%%%%%%%%

Fix a collection of generators of $G$, $\sigma = (g_1, \hdots, g_n)$, so that $g_1g_2 \hdots g_n = 1$. If $B := \{b_1, \hdots, b_n\} \subseteq \bP^1$ is a collection of distinct points and $b$ is any point not in $B$, then we can choose a ``standard generating set" of $\pi_1(\bP^1 - B, b)$ consisting of positively oriented loops $\gamma_i$ around $b_i$ for each $i$ so that $\gamma_1 \cdot \hdots \cdot \gamma_n = 1$. In this way, $\sigma$ determines a map from $\pi_1(\bP^1 - B, b)$ to $G$ that sends $\gamma_i$ to $g_i$. Since $G$ acts by permutations on $A$, this data determines a degree $|A|$ branched cover of $\bP^1$. Any cover arising this way will be called be said to have \emph{monodromy} $\sigma$. Let $g_Y$ be the genus of such a cover and let $\Hur_\sigma$ be the Hurwitz space of genus $g_Y$ covers of $\bP^1$ with monodromy $\sigma$. Finally, we will fix a nonnegative integer $\ell$ and call the first $\ell$ branch points \emph{special}. 

To each $\left( f: Y \ra \bP^1 \right) \in \mathrm{Hur}_\sigma$ its Galois closure is a $G$-regular cover $\wt{f}: X \ra \bP^1$ that factors through $f$. The cohomology of $X$ is a $G$-representation and the cohomology of $Y$ can be identified with the $H$-invariant subspace of the cohomology of $X$. Let $m_i$ denote the multiplicity with which $M_i$ occurs in $H^1(X, \bC)$. Let $M_i^H$ be the $H$-invariant subspace of $M_i$. In the sequel, we will let $\wt{\Sigma_{sp}}$ (resp. $\Sigma_{sp}$) be the preimages of special branch points on $X$ (resp. $Y$). Set $n_Y := |\Sigma_{sp}|$.

Let $\chi_{triv}$ and $\chi_{reg}$ be the trivial and regular characters respectively. Given two characters $\chi_1$ and $\chi_2$ of a group $G$, we will set $\langle \chi_1, \chi_2 \rangle_G := \frac{1}{|G|} \sum_{g \in G} \chi_1(g) \overline{\chi_2(g)}$. Let $\mathrm{Res}_H$ be the restriction of a $G$-action to $H$. Finally, given an element $g$ (resp. a subgroup $G'$) of $G$ we will let $\chi_g$ (resp. $\chi_{G'}$) be the $G$-representation $\bC[G/\langle g \rangle]$ (resp. $\bC[G/G']$). 

\begin{lem}\label{L:H1Character}
Let $X_0$ be a not necessarily connected $G$-regular cover of $\bP^1$ that is branched over $m \geq 3$ points and whose monodromy $(g_1', \hdots, g_m')$ generates a subgroup $G'$. The character of $H^1(X_0)$ is $(m-2)\chi_{reg} + 2\chi_{G'} - \sum_{i=1}^{m} \chi_{g_i'}.$
\end{lem}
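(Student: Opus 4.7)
The plan is to extract $H^1(X_0)$ from the long exact sequence of compactly supported cohomology associated to the open/closed decomposition $X_0 = X_0^\circ \sqcup \wt{\Sigma}$, where $\wt{\Sigma}$ is the preimage of the $m$ branch points and $X_0^\circ := X_0 \setminus \wt{\Sigma}$. Throughout, all cohomology is with $\bC$-coefficients and carries the natural $G$-action.

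First I would analyze $X_0^\circ$, on which $G$ acts freely because all ramification has been removed. Then $X_0^\circ \to \bP^1 \setminus B$ is an unramified $G$-torsor, so the cellular chains of $X_0^\circ$ (for a $G$-invariant cell structure lifted from $\bP^1 \setminus B$) form a free $\bC[G]$-complex whose equivariant Euler characteristic is
\[ \chi^*_c(X_0^\circ) := \chi_{H^0_c} - \chi_{H^1_c} + \chi_{H^2_c} = \chi(\bP^1 \setminus B)\,\chi_{reg} = (2-m)\chi_{reg} \]
as virtual $G$-characters.

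Next I would pin down the other cohomology pieces of $X_0^\circ$. Because the monodromy generates $G'$, the connected components of $X_0$ are in $G$-equivariant bijection with $G/G'$, so $H^0(X_0) \cong \chi_{G'}$. Since $m \geq 3$, removing the finite set $\wt{\Sigma}$ from $X_0$ does not disconnect any component, so $H^0(X_0^\circ) \cong \chi_{G'}$ as well, while $H^0_c(X_0^\circ) = 0$ because $X_0^\circ$ has no compact components. Poincar\'e duality on the oriented surface $X_0^\circ$ gives $H^2_c(X_0^\circ) \cong H^0(X_0^\circ)^\ast \cong \chi_{G'}$, using that permutation representations are self-dual. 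Substituting into the Euler characteristic identity above yields
\[ H^1_c(X_0^\circ) \;=\; (m-2)\chi_{reg} + \chi_{G'} \]
in the Grothendieck group of $G$-representations.

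Finally I would combine these inputs via the long exact sequence
\[ 0 \to H^0_c(X_0^\circ) \to H^0(X_0) \to H^0(\wt{\Sigma}) \to H^1_c(X_0^\circ) \to H^1(X_0) \to H^1(\wt{\Sigma}) \to \cdots \]
Because $\wt{\Sigma}$ is zero-dimensional, $H^1(\wt{\Sigma}) = 0$, and the $G$-set structure on $\wt{\Sigma}$ is $\bigsqcup_{i=1}^m G/\langle g_i' \rangle$, which gives $H^0(\wt{\Sigma}) \cong \sum_i \chi_{g_i'}$. Reading the resulting four-term exact sequence as an identity of virtual characters produces
\[ H^1(X_0) \;=\; H^1_c(X_0^\circ) - H^0(\wt{\Sigma}) + H^0(X_0) \;=\; (m-2)\chi_{reg} + 2\chi_{G'} - \sum_{i=1}^m \chi_{g_i'}, \]
the claimed formula. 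The one bookkeeping subtlety is ensuring that the two copies of $\chi_{G'}$ really add (one from $H^0(X_0)$ and one from Poincar\'e duality on $X_0^\circ$); this rests on the hypothesis $m \geq 3$ (so that each component of $X_0$ meets $\wt{\Sigma}$, making the restriction $H^0(X_0) \hookrightarrow H^0(\wt{\Sigma})$ injective) and on the monodromy generating $G'$ (so that $X_0$ has exactly $|G/G'|$ components).
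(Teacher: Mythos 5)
Your argument is correct, but it takes a genuinely different route from the paper's. The paper works entirely at the cochain level: it triangulates $\bP^1$ with vertices exactly at the $m$ branch points (so $3m-6$ edges and $2m-4$ faces), lifts this to a $G$-equivariant triangulation of $X_0$, and extracts the character of $H^1(X_0)$ from two short exact sequences built out of $\delta^\bullet$, using that edge and face cochains are $G$-free while vertex cochains give $\sum_i \chi_{g_i'}$ and $H^0, H^2$ each give $\chi_{G'}$. You instead organize the same structural inputs through the open--closed decomposition $X_0 = X_0^\circ \sqcup \wt{\Sigma}$, the long exact sequence in compactly supported cohomology, Poincar\'e duality on $X_0^\circ$, and the equivariant Euler characteristic of the free part. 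Your route avoids choosing a triangulation and makes the provenance of each term transparent (freeness off $\wt\Sigma$ gives $(m-2)\chi_{reg}$, the $G$-set $\bigsqcup_i G/\langle g_i'\rangle$ gives $\sum_i\chi_{g_i'}$, the component structure gives the two copies of $\chi_{G'}$); the paper's route is more elementary, using nothing beyond equivariant simplicial cochains and character arithmetic.

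One step to tighten: your justification of $\chi_c^*(X_0^\circ) = (2-m)\chi_{reg}$ via ``cellular chains of $X_0^\circ$'' is loose, since for a noncompact cell structure the cellular cochain complex computes ordinary, not compactly supported, cohomology. The clean fix is to use $H^*_c(X_0^\circ) \cong H^*(X_0, \wt{\Sigma})$ and note that the relative cochain complex of the lifted triangulation with vertex set exactly $\wt{\Sigma}$ is $G$-free, with $3m-6$ edge orbits and $2m-4$ face orbits --- which is precisely the paper's triangulation, so the two arguments converge at this point. Also, the injectivity of $H^0(X_0) \to H^0(\wt{\Sigma})$ needs only that every component of $X_0$ surjects onto $\bP^1$ (true for any $m \geq 1$), and in any case the virtual-character bookkeeping through the exact sequence does not require it separately; the hypothesis $m \geq 3$ is what makes the paper's triangulation (and the count $3m-6$, $2m-4$) available.
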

%
% \begin{lem}
%  If $\sigma' = (g_1', \hdots, g_m')$ generates a subgroup $G'$, then the character $\chi_X$ of $H^1(X)$ is $(m-2)\chi_{reg} + 2\chi_{G'} - \sum_{i=1}^{m} \chi_{g_i'}.$
%  \end{lem}
 \begin{proof}
 Triangulate $X_0/G = \bP^1$ with one vertex for each of the $m$ branch points. This triangulation has $3m-6$ edges and $2m-4$ faces. Lift this to a triangulation of $X_0$. Let $\delta^\bullet: C^\bullet(X_0) \ra C^{\bullet + 1}(X_0)$ be the simplicial cochain complex with complex coefficients and note that the differentials are $G$-equivariant. We have the following two exact sequences, 
 \[ 0 \leftarrow H^2(X_0) \leftarrow C^2(X_0) \leftarrow C^1(X_0) \leftarrow \ker(\delta^1) \leftarrow 0 \] and \[ 0 \leftarrow \mathrm{im}(\delta^0) \leftarrow C^0(X_0) \leftarrow H^0(X_0) \leftarrow 0. \]
As $G$-representations, $H^2(X_0)$ and $H^0(X_0)$ have characters $\chi_{G'}$, $C^2(X_0)$ has character $(2m-4)\chi_{reg}$, $C^1(X_0)$ has character $(3m-6)\chi_{reg}$, and $C^0(X_0)$ has character $\sum_{i=1}^m \chi_{g_i'}$. So the character of $\ker(\delta^1)$ is $(3m-6)\chi_{reg} - (2m-4)\chi_{reg} + \chi_{G'}$ and the character of $\mathrm{im}(\delta^0)$ is $\sum_{i=1}^m \chi_{g_i'} - \chi_{G'}$. The character of $H^1(X_0)$ is the difference of these two characters.
%Faces = 2 + 2(m-3) = 2m-4
%Edges = 3 + 3(m-3) = 3m - 6
 \end{proof}

\begin{prop}\label{P:CohomologyDecomposition} If $\left( f: Y \ra \bP^1 \right) \in \mathrm{Hur}_\sigma$, then
$H^1(Y, \bC) = \bigoplus_i (M_i^H)^{m_i}$.
\end{prop}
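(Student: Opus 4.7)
The plan is to identify $Y$ with the quotient $X/H$, invoke the standard fact that for a finite group action the cohomology of the quotient equals the invariants, and then apply this component-wise to the $G$-isotypic decomposition of $H^1(X, \bC)$.

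First, I would observe that the Galois closure $\wt{f}: X \ra \bP^1$ factors through $f: Y \ra \bP^1$. Since the deck group of $\wt{f}$ is $G$ acting on $X$ and, by construction, $Y \ra \bP^1$ corresponds to the permutation action of $G$ on cosets $G/H = A$ (with $H$ the stabilizer of the identity), the intermediate cover $X \ra Y$ is precisely the quotient of $X$ by the action of $H$. In particular, $Y \cong X/H$ as compact Riemann surfaces, and the pullback $\pi^*: H^1(Y, \bC) \ra H^1(X, \bC)$ is a map of vector spaces that lands in $H^1(X,\bC)^H$ by functoriality.

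Next, I would use the standard transfer argument: for a finite group $H$ acting holomorphically on a compact Riemann surface $X$ with quotient $\pi: X \ra X/H$, the composition $\pi_* \pi^*$ equals multiplication by $|H|$ on $H^*(X/H, \bC)$, while $\tfrac{1}{|H|} \pi^* \pi_*$ is the $H$-averaging projection on $H^*(X, \bC)$. Since $\bC$ has characteristic zero, this shows $\pi^*$ is injective with image exactly $H^1(X, \bC)^H$. Hence
\[ H^1(Y, \bC) \cong H^1(X, \bC)^H. \]
This identification also arises from the Leray spectral sequence for $\pi$, which degenerates because $\bC[H]$ is semisimple and all higher direct images of $\underline{\bC}$ on $Y$ vanish.

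Finally, the definition of $m_i$ gives a $G$-equivariant isomorphism $H^1(X, \bC) \cong \bigoplus_i M_i^{m_i}$. Taking $H$-invariants commutes with finite direct sums, so
\[ H^1(Y, \bC) \cong H^1(X, \bC)^H \cong \bigoplus_i \left(M_i^{m_i}\right)^H \cong \bigoplus_i (M_i^H)^{m_i}, \]
which is the claim. The only step requiring care is the transfer identification $H^1(Y, \bC) = H^1(X, \bC)^H$ for the \emph{branched} cover $X \ra Y$; the rest is formal once the factorization $X \ra Y \ra \bP^1$ has been established from the permutation-theoretic definition of $\mathrm{Hur}_\sigma$.
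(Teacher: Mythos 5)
Your factorization $X \ra Y \ra \bP^1$ and the identification $H^1(Y,\bC)\cong H^1(X,\bC)^H$ via the transfer are fine (the latter is exactly the identification the paper asserts when setting up the section, and branching is harmless with $\bC$ coefficients). The genuine problem is the sentence ``the definition of $m_i$ gives a $G$-equivariant isomorphism $H^1(X,\bC)\cong\bigoplus_i M_i^{m_i}$.'' The $M_i$ are, by definition, only the irreducible constituents of the permutation module $\bC[A]$, whereas $H^1(X,\bC)$ will in general contain irreducible $G$-representations that do not occur in $\bC[A]$ at all; the numbers $m_i$ record the multiplicities of the $M_i$ but say nothing about these extra summands. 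Concretely, take $G=S_3$ acting on three points, so $A=\bZ/3$ and $H$ is generated by a transposition, with $\sigma=\left((12),(12),(13),(13)\right)$: the Galois closure $X$ has genus one and $H^1(X,\bC)$ is two copies of the sign representation, which does not occur in $\bC[A]$, so your claimed isomorphism would force $H^1(X,\bC)=0$. As written, the final step fails.

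The repair is one line: write $H^1(X,\bC)\cong\bigoplus_i M_i^{m_i}\oplus W$ where $W$ is a sum of irreducibles not occurring in $\bC[A]$, and observe that for any irreducible $V$ Frobenius reciprocity gives $\dim V^H=\langle \mathrm{Res}_H V,\chi_{triv}\rangle_H=\langle V,\bC[A]\rangle_G$, so $W^H=0$ and taking $H$-invariants still yields $\bigoplus_i (M_i^H)^{m_i}$. With that addition your argument is complete, and it is genuinely different from the paper's: the paper takes the invariants identification for granted and instead proves the equality by a dimension count, computing $2g_Y$ via Riemann--Hurwitz together with the character formula of Lemma \ref{L:H1Character}; that route simultaneously produces the multiplicity and genus formulas (Equations \ref{E:Multiplicity} and \ref{E:Genus}) which are used repeatedly later, while your route bypasses Riemann--Hurwitz but does not yield those formulas.
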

\begin{proof}
By Frobenius reciprocity, \begin{equation}\label{E:OneDimInvariants}
\dim M_i^H = \langle \mathrm{Res}_H M_i, \chi_{triv} \rangle_H = \langle M_i, \bC[A] \rangle_G = n_i.
\end{equation}
Let $X$ be the Galois closure of $Y$. By Lemma \ref{L:H1Character}, the trivial representation does not occur in $H^1(X, \bC)$ and, if $M_j$ is nontrivial, then,
\begin{equation}\label{E:Multiplicity}
 m_j = (n-2) \dim M_j - \sum_{i=1}^n \langle M_j, \chi_{g_i} \rangle_G.
 \end{equation}
If $o(g)$ is the number of cycles of $g \in G$ acting on $A$, then 
\[ o(g) = \langle \mathrm{Res}_{\langle g \rangle} \bC[A], \chi_{triv} \rangle_{\langle g \rangle}  = \langle \bC[A], \chi_g \rangle_G.  \]
By the Riemann-Hurwitz formula,
\[ 2g_Y - 2 = -2|A| + \sum_{i=1}^n |A| - o(g_i) = (n-2)|A| - \sum_{i=1}^n \langle \bC[A], \chi_{g_i} \rangle_G. \]
It follows, by Equation \ref{E:Multiplicity} and since $\bC[A] = \bigoplus_j M_j^{n_j}$, that
\begin{equation}\label{E:Genus}
 2g_Y - 2 = (n-2)|A| - \sum_j \sum_i  n_j\langle M_j, \chi_{g_i} \rangle_G = -2 + \sum_j m_j n_j. 
 \end{equation}
This equality and Equation \ref{E:OneDimInvariants} show that $\bigoplus_i (M_i^H)^{m_i}$ and $H^1(Y, \bC)$, of which it is a subspace, have the same dimension.
\end{proof}

\begin{lem}\label{L:RelMultiplicity}
The multiplicity with which $M_i$ appears in $H^0(\wt{\Sigma}_{sp})$ is 
\[ \rho_i := \sum_{j=1}^{\ell} \dim \mathrm{Fix}_{g_j}(M_i).\]
%where $\mathrm{Fix}_{g_b}(M_i)$ is the subspace of $M_i$ consisting of vectors $v$ so that $g_b \cdot v = v$. 
\end{lem}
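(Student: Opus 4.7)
The plan is to decompose $\wt{\Sigma}_{sp}$ according to the branch point it sits over, identify each piece as an induced representation of $G$, and then apply Frobenius reciprocity.

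First I would write $\wt{\Sigma}_{sp} = \bigsqcup_{j=1}^{\ell} \wt{\Sigma}_{sp,j}$ where $\wt{\Sigma}_{sp,j} = \wt{f}^{-1}(b_j)$ is the preimage on $X$ of the $j$-th (special) branch point. Since $\wt{f} : X \to \bP^1$ is a Galois $G$-cover, $G$ acts transitively on each fiber $\wt{\Sigma}_{sp,j}$, and the stabilizer of any point in $\wt{\Sigma}_{sp,j}$ is the cyclic subgroup $\langle g_j \rangle$ generated by the local monodromy $g_j$. Consequently, as $G$-sets, $\wt{\Sigma}_{sp,j} \cong G/\langle g_j \rangle$, so $H^0(\wt{\Sigma}_{sp,j}, \bC) \cong \bC[G/\langle g_j \rangle] = \chi_{g_j}$ in the paper's notation.

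Taking the direct sum over $j$, the multiplicity of $M_i$ in $H^0(\wt{\Sigma}_{sp}, \bC)$ is $\sum_{j=1}^{\ell} \langle M_i, \chi_{g_j} \rangle_G$. Next I would apply Frobenius reciprocity, noting that $\chi_{g_j} = \mathrm{Ind}_{\langle g_j \rangle}^G \mathbf{1}$, to get
\[ \langle M_i, \mathrm{Ind}_{\langle g_j \rangle}^G \mathbf{1} \rangle_G = \langle \mathrm{Res}_{\langle g_j \rangle} M_i, \mathbf{1} \rangle_{\langle g_j \rangle} = \dim \mathrm{Fix}_{g_j}(M_i). \]
Summing over $j = 1, \ldots, \ell$ yields $\rho_i$, completing the proof.

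There is essentially no obstacle here: the argument is a routine application of the orbit-stabilizer identification of the fiber of a Galois cover followed by Frobenius reciprocity, entirely parallel to the computations already used in the proofs of Lemma \ref{L:H1Character} and Proposition \ref{P:CohomologyDecomposition}. The only thing to be careful about is that the stabilizer of each point in $\wt{\Sigma}_{sp,j}$ is generated by a conjugate of $g_j$, but since conjugate subgroups give isomorphic induced representations (equivalently, $\dim \mathrm{Fix}_{g}(M_i)$ depends only on the conjugacy class of $g$), this causes no trouble.
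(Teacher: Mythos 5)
Your argument is correct and is essentially the paper's proof: the paper likewise identifies $H^0(\wt{\Sigma}_{sp})$ with $\bigoplus_{j=1}^{\ell}\bC[G/\langle g_j\rangle]$ as a $G$-representation and concludes by Frobenius reciprocity. You have merely spelled out the fiberwise orbit--stabilizer identification and the reciprocity computation that the paper leaves implicit.
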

\begin{proof}
Since $H^0(\wt{\Sigma}_{sp})$ is isomorphic to $\displaystyle{\bigoplus_{j=1}^\ell \bC[G/\langle g_j \rangle]}$ as a $G$-representation, the claim is immediate from Frobenius reciprocity. 
\end{proof}

%%%%%%%%%%%%%%%%%%%%
%
% DO NOT DELETE: DEFINITION OF ADMISSIBLE
%
%%%%%%%%%%%%%%%%%%%%%
% A \emph{$G$-admissible cover of a sphere} is a finite-to-one map $\phi: C \ra D$ where $C$ is a stable curve and $D \in \overline{\cM_{0,n}}$ is a stable marked sphere with nodes $N$ and marked points $P$ so that there is a $G$-action on $C$ that (1) acts freely and transitively on the preimages of any point in $D - (N \cup P)$, (2) has a cyclic stabilizer of preimages of points in $N \cup P$, and so that (3) for each node $s \in C$ there are local coordinates $x$ and $y$ so that a neighborhood of the node looks like $\{ (x,y) : xy = a\}$ for some $a \in \bC$ and the action of any element of the stabilizer of $s$ is by $(x,y) \ra (\zeta^e x, \zeta^{-e} y)$ for some integer $e$ and some root of unity $\zeta$. By Abramovich-Corti-Vistoli, the closure $\overline{\cX}$ of the Hurwitz space $\cX$ of $G$-regular covers with monodromy $\sigma$ in the moduli space of $G$-admissible covers is projective and admits a holomorphic map to $\overline{\cM_{g_X}}$ given by sending the cover $\phi: C \ra D$ to $C$. \ann{Check that we don't have to take a normalization and find a citation in ACV} Let $\Omega \overline{\cX}$ denote the pullback of the bundle of stable differentials on $\overline{\cM_{g_X}}$. This bundle admits a $G$-action and a decomposition $\Omega \overline{X} = \cW_1 \oplus \hdots \oplus \cW_n$ into $G$-isotypic components. \ann{Find a reference} Let $\cW_i^H$ be the $H$-invariant forms in $\cW_i$. 
%%%%%%%%%%%%%%%%%
%
% END DO NOT DELETE
%
%%%%%%%%%%%%%%%%%%%

By Harris-Mumford \cite[Theorem 4]{HarrisMumford}, there is a projective\footnote{The projectivity is not mentioned in Harris-Mumford, but it is shown in the remark on page 31 of Mochizuki \cite{MochizukiThesis}} (coarse) moduli space $\cA_\sigma$ of admissible maps that contains $\mathrm{Hur}_\sigma$ and that admits a ``source map" $s: \cA_\sigma \ra \overline{\cM_{g_Y, n_Y}}$ which records the domain of the admissible cover together with preimages of branch points. We will assume that $s$ also labels the marked points so that two points have the same label if and only if they map to the same point under the branched covering. Let $t: \overline{\cM_{g_Y, n_Y}} \ra \overline{\cM_{g_Y, |\Sigma_{sp}|}}$ be the map that forgets the marked points that do not belong to $\Sigma_{sp}$. Define $\cB_\sigma$ to be the (quasiprojective) subset of $\cA_\sigma$ that is the preimage of $\cM_{g_Y, |\Sigma_{sp}|}$ under $t \circ s$. Heuristically, $\cB_\sigma$ contains limits of covers in $\mathrm{Hur}_\sigma$ where the domain of the cover is still a smooth surface of genus $g_Y$, but where some preimages of branched points may have collided together, although no preimage of a special branch point is involved in such a collision.

Given a subset $\cL$ of $\cB_\sigma$ we will let $H^1_{\cL, rel}$ (resp. $H^1_{\cL}$) be the bundle over $\cL$ whose fiber over $(f:Y\ra \bP^1)$ is $H^1(Y, \Sigma_{sp}; \bC)$ (resp. $H^1(Y, \bC)$). There is a map $p: H^1_{\cL, rel} \ra H^1_{\cL}$ whose restriction to fibers is the map $H^1(Y, \Sigma_{sp}) \ra H^1(Y)$. Let $\cV_i$ be the flat subbundle of $H^1_{\Hur_\sigma, rel}$ corresponding to $(M_i^H)^{m_i+\rho_i}$ and let $\cV_i^{1,0}$ (resp. $\cV_i^{0,1}$) be the (resp. anti-) holomorphic $1$-forms in it. Say that $\cV_i$ is \emph{defined over a field $k$}, if its fibers over all points $(f: Y \ra \bP^1) \in \mathrm{Hur}_\sigma$ admit a basis in $H^1(Y, \Sigma_{sp}; k)$. Say that $\cV_i$ is \emph{Hodge compatible} if $p(\cV_i) = p(\cV_i^{1,0}) \oplus p(\cV_i^{0,1})$ and $\dim \cV_i^{1,0} = \dim \cV_i^{0,1}$. 

\begin{lem}\label{L:BoundaryConstruction}
The bundle $\cV_i^{1,0}$ extends to a bundle $\cM_i$ over $\cB_\sigma$. 
\end{lem}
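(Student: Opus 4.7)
The plan is to extend $\cV_i^{1,0}$ by exhibiting it as a $G$-isotypic direct summand of an ambient pushforward that is already a vector bundle over $\cB_\sigma$. First I would use the universal admissible Galois closure $\pi : \cX \to \cA_\sigma$ guaranteed by the Harris-Mumford moduli space, which carries a fiberwise $G$-action extending the one on $\Hur_\sigma$, together with the divisor $\wt{\cD}$ marking the preimages of the special branch points (these preimages remain distinct over $\cB_\sigma$ by the very definition of $\cB_\sigma$). The pushforward $\pi_* \omega_{\cX/\cA_\sigma}(\wt{\cD})$, with the appropriate residue-trace condition, is $G$-equivariant; because $\cX$ is a flat proper family of at worst nodal curves of fixed arithmetic genus and $\wt{\cD}$ is finite over each fiber, this pushforward is a vector bundle whose fiber over a point of $\Hur_\sigma$ computes the $(1,0)$-part of $H^1(X,\wt{\Sigma}_{sp};\bC)$.

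Second, I would set $\cM_i$ to be the $H$-invariant part of the image of the $G$-isotypic projector
\[ P_i \;=\; \frac{\dim M_i}{|G|}\sum_{g \in G}\overline{\chi_i(g)}\,g \;\in\; \bC[G] \]
acting on this pushforward, restricted to $\cB_\sigma$. On $\Hur_\sigma$ this agrees with $\cV_i^{1,0}$ by Proposition \ref{P:CohomologyDecomposition} together with Lemma \ref{L:RelMultiplicity}. To see that $\cM_i$ is a holomorphic vector bundle, not just a coherent sheaf, I would observe that the $G$-character of the ambient pushforward is the trace of a holomorphic family of $G$-equivariant automorphisms of a bundle of fixed rank, hence a continuous function on $\cB_\sigma$ with values in the discrete lattice $\bZ_{\geq 0}^{\mathrm{Irr}(G)}$. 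On each connected component it is therefore constant, so $P_i$ has constant rank; its image is a holomorphic direct summand, and passing to $H$-invariants preserves this property.

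The main obstacle I expect is the bookkeeping needed to arrange the twisted dualizing sheaf with the correct pole and residue conditions along $\wt{\cD}$, so that its pushforward really computes the $(1,0)$-part of the relative cohomology on the nodal admissible fibers, and to verify that forming $H$-invariants commutes with $\pi_*$ (which is standard given that $|H|$ is invertible in $\bQ$ and that $\cX \to \cX/H$ is flat for an action with trivial generic stabilizer). Once these identifications are in place, the construction of $\cM_i$ is canonical and manifestly extends $\cV_i^{1,0}$ as a vector bundle over all of $\cB_\sigma$.
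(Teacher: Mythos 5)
Your route is genuinely different from the paper's. The paper's proof is soft: since over $\cB_\sigma$ the marked, stabilized domain stays smooth, the relative cohomology local system and hence the flat subbundle $\cV_i$ extend, and $\cM_i$ is taken to be the intersection of this extension with the Hodge bundle; the only issue is that the rank of such an intersection can jump up on $\cB_\sigma - \Hur_\sigma$, and this is ruled out by the dimension count $\sum_i \dim \cV_i^{1,0} = g_Y$, which leaves no room for any summand to grow. You instead build the extension directly as the $H$-invariant $M_i$-isotypic summand of the pushforward of the (relative) dualizing sheaf along a universal Galois-closure family, with local freeness coming from local constancy of the $G$-character of the fibers. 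That constancy argument (holomorphic $G$-action on a locally free pushforward, character continuous with values in a finite set) is sound, and it is essentially the Bertin--Romagny fact that the paper itself invokes later, in Corollary \ref{C:BoundaryFibers}, to identify the boundary fibers of $\cM_i$; so your approach proves more than the lemma asks, at the cost of heavier machinery.

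Two concrete gaps remain as written. First, there is no universal admissible Galois-closure family over the coarse Harris--Mumford space $\cA_\sigma$: that space parametrizes the degree-$|A|$ covers $Y \ra \bP^1$, and the Galois closure in families requires the Bertin--Romagny space $\cA^G_\sigma$ of $G$-covers (Proposition \ref{P:StratumCoordinates}, Theorem \ref{T:RegularToIrregularMap}), possibly at the level of stacks or after finite base change, since these are coarse spaces. Your construction then naturally lives over the preimage of $\cB_\sigma$ in $\cA^G_\sigma$, and producing $\cM_i$ over $\cB_\sigma$ itself needs a descent step along the finite surjection $\mathrm{Mod}_H$ (or a reformulation intrinsic to the $Y$-side, e.g.\ cutting out the subsheaf of $\Omega\cA_\sigma\restriction_{\cB_\sigma}$ whose pullback to a Galois closure is $M_i$-isotypic); the proposal does not address this. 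Second, the sheaf $\omega_{\cX/\cA_\sigma}(\wt{\cD})$ ``with the appropriate residue-trace condition'' is both under-specified and unnecessary: $\cV_i^{1,0}$ consists of classes of honest holomorphic $1$-forms, i.e.\ the $(1,0)$-part of $H^1(Y,\Sigma_{sp};\bC)$ is just $H^0(\omega_Y)$, so the untwisted pushforward (the bundle of stable differentials, the paper's $\Omega\cA_\sigma$) is the right ambient bundle, and leaving the twist and residue conditions unspecified obscures the base-change and local-freeness claims you rely on. Both points are repairable, but they need to be fixed for the argument to be complete; the identification over $\Hur_\sigma$ with $\cV_i^{1,0}$, and the constant-rank projector argument, are otherwise fine.
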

\begin{proof}
Since $H^1_{\Hur_{\sigma}, rel}$ extends as a flat bundle to $H^1_{\cB_\sigma, rel}$ it follows that $\cV_i$ extends as well. By lower-semicontinuity of rank, the dimension of the intersection of $\cV_i$ with the Hodge bundle could potentially increase for points in $\cB_\sigma - \mathrm{Hur}_\sigma$, but this would imply that the sum (over $i$) of these dimensions exceeds $g_Y$, a contradiction. Therefore $\cM_i$ is a bundle over $\cB_\sigma$ as desired.
\end{proof}

Let $\Omega \cM_{g_Y}$ be the Hodge bundle over $\cM_{g_Y}$, which, as a set consists of pairs $(Y, \eta)$ where $Y \in \cM_{g_Y}$ and $\eta \in H^{1,0}(Y)$. Let $\pi: \cM_i \ra \Omega \cM_{g_Y}$ be the map so that $\pi\left( \left( f: Y \ra \bP^1, \eta \right) \right) = (Y, \eta)$ where $f \in \cB_\sigma$ and $\eta$ is a holomorphic $1$-form on $Y$. The following result was inspired by Delecroix-Rueth-Wright \cite[Theorem 2.1]{DRW}.

\begin{prop}\label{P:HHConstructionCriterion}
Fix a stratum $\cH$ and an index $i$. Suppose that $\cV_i$ is Hodge compatible, defined over $\bR$, and that any form in a component $\cM$ of $\cM_i \cap \pi^{-1}(\cH)$ only has zeros at preimages of special branch points. Set $r := \frac{n_i m_i}{2}$. Let $d_1$ be the dimension of the generic fiber of $\pi: \cM \ra \cH$. Let $d_2$ be the codimension of $\cM$ in $\cM_i$. Then $\pi(\cM)$ is an open and dense subset of a rank $r$ rel $n_i \rho_i$ invariant subvariety of $\cH$ if
\[ (n+r-3)-d_1-d_2 \geq 2r + n_i\rho_i. \]
\end{prop}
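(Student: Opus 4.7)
The strategy is to bound $\dim_\bC \pi(\cM)$ above using the flat subbundle $\cV_i$ and below using the hypothesis, obtain equality, and then use the $\bR$-structure on $\cV_i$ to promote $\pi(\cM)$ to an open dense subset of an affine invariant subvariety of the predicted rank and rel.

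First I compute $\dim_\bC \cM_i$. Since $\cB_\sigma$ parametrizes $n$ marked points on $\bP^1$ modulo $\mathrm{PGL}(2,\bC)$, its complex dimension is $n - 3$. To compute $\dim_\bC \cV_i^{1,0}$, I use that in the mixed Hodge structure on $H^1(Y, \Sigma_{sp}; \bC)$, the kernel of $p$ is pure of Hodge type $(0,0)$, so $F^1$ maps isomorphically onto the holomorphic $1$-forms on $Y$. Combined with Hodge compatibility, this forces $\dim_\bC \cV_i^{1,0} = \dim_\bC p(\cV_i^{1,0}) = n_i m_i / 2 = r$. Hence $\dim_\bC \cM_i = (n - 3) + r$, and the hypothesis yields the lower bound $\dim_\bC \pi(\cM) = (n + r - 3) - d_1 - d_2 \geq 2r + n_i \rho_i$.

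The main obstacle is the matching upper bound $\dim_\bC \pi(\cM) \leq 2r + n_i \rho_i$, which I would establish by showing that the image of $d\pi$ at a generic $(f, \omega) \in \cM$ lies in (the image of) $\cV_i|_f$ inside the period coordinates of $\cH$; this uses the hypothesis that the zeros of $\omega$ lie in $\Sigma_{sp}$, so that the natural map from $H^1(Y, \Sigma_{sp}; \bC)$ to the period coordinates of $\cH$ is well defined. The argument rests on flatness: for a nearby $(f', \omega') \in \cM$, the cohomology class of $\omega'$ differs from the Gauss--Manin parallel transport of $\omega$ by an element of $\cV_i|_{f'}$, so the first-order variation lies in $\cV_i$. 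Since $\dim_\bC \cV_i = n_i(m_i + \rho_i) = 2r + n_i \rho_i$, the two bounds together force $\dim_\bC \pi(\cM) = 2r + n_i \rho_i$ and identify $T_{(Y, \omega)} \pi(\cM)$ with the image of $\cV_i|_f$.

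Finally, since $\cV_i$ is defined over $\bR$, this tangent space is the complexification of a real subspace $V_i$, and therefore $\pi(\cM)$ is locally cut out in $\cH$ by $\bR$-linear equations in period coordinates. The $\mathrm{GL}^+(2, \bR)$ action preserves such equations, so $\pi(\cM)$ is $\mathrm{GL}^+(2, \bR)$-invariant and its closure is an affine invariant subvariety by the theorems of Eskin--Mirzakhani and Eskin--Mirzakhani--Mohammadi. Its rank is $\tfrac{1}{2} \dim_\bR p(V_i) = \tfrac{1}{2} n_i m_i = r$ and its rel is $\dim_\bR (V_i \cap \ker p) = n_i \rho_i$, as claimed.
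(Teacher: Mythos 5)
Your computation of $\dim_\bC \cM_i = (n-3)+r$ and the lower bound $\dim_\bC \pi(\cM) \geq 2r + n_i\rho_i$ agree with the paper, but from there you take a different route than the paper does, and that route has a genuine gap at its final step. What your flatness argument actually gives is that the $\pi$-image of a small neighborhood of a single point $m \in \cM$ lies, in period coordinates, inside one parallel transport of the $(2r+n_i\rho_i)$-dimensional subspace $(M_i^H)^{m_i+\rho_i} \subseteq H^1(Z,\Sigma_{sp};\bC)$. This suffices for the upper bound on dimension, but it does not give that $\pi(\cM)$ is ``locally cut out by $\bR$-linear equations'': near a fixed point of $\pi(\cM)$, points coming from other preimage branches of $\pi$ in $\cM$ (including sequences of preimages leaving every compact subset of $\cM$) may lie in \emph{different} parallel transports of that subspace, so the germ of $\pi(\cM)$, and of its Zariski closure, is a priori only contained in a countable union of such subspaces. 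Handling exactly this is the content of the criterion the paper invokes (Theorem \ref{T:MMW-Criterion}): there the Noetherian property and a Baire category argument reduce to a finite union, and the argument requires the subspaces to range over a \emph{countable} family, i.e.\ to be defined over a real number field (supplied in this setting by Lemma \ref{L:FieldOfDefinition}), not merely over $\bR$ as you use. Note that with that criterion neither the upper bound on $\dim \pi(\cM)$ nor the tangent-space identification is needed; the lower bound plus the pointwise containment already suffice, which is why the paper's proof is shorter than the one you are attempting.

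Your concluding inferences are also not correct as stated. Even granting local linearity, ``the $\mathrm{GL}^+(2,\bR)$ action preserves such equations'' does not make $\pi(\cM)$ itself invariant: $\pi(\cM)$ is neither closed nor locally \emph{equal} to the linear models (it is only contained in them, possibly omitting parts), so applying $g$ to a point of $\pi(\cM)$ lands in the linear subspace but not necessarily in $\pi(\cM)$. And Eskin--Mirzakhani and Eskin--Mirzakhani--Mohammadi concern closures of $\mathrm{GL}^+(2,\bR)$-orbits and invariant measures; they cannot be applied to the closure of a set whose invariance has not yet been established. The paper's proof sidesteps both problems by applying Theorem \ref{T:MMW-Criterion} directly to the constructible set $\pi(\cM)$, producing an invariant subvariety in which $\pi(\cM)$ minus a proper closed subset is open and dense, and only then reading off rank and rel from the projection of the tangent space. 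To repair your argument you would either have to reprove that criterion (with the number-field/countability input) or show that the germ of the Zariski closure of $\pi(\cM)$ at \emph{every} point is a union of complexified real subspaces and deduce invariance from that, which is substantially more than the last paragraph of your proposal provides.
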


%Stacks Project Justification for the d2 computation - https://stacks.math.columbia.edu/tag/0AZL - the buzzword is proper intersection.

The main ingredient in the proof is the following criterion of McMullen, Mukamel, and Wright \cite[Theorem 5.1]{MMW} (see the proof of Filip \cite[Theorem 5.4]{Fi1} for similar ideas). We include a proof for completeness and to account for the fact that, unlike \cite{MMW}, we only assume that our initial set $\cN$ is constructible.

\begin{thm}[McMullen, Mukamel, and Wright \cite{MMW}]\label{T:MMW-Criterion} If $\cN$ is constructible subset of dimension at least $d$ of a stratum and, for every $(Z, \zeta) \in \cN$ with zeros $\Sigma$, there is a $d$-dimensional subspace $S \subseteq H^1(Z, \Sigma; \bC)$ that is defined over a real number field and that contains $\zeta$, then, there is some Zariski closed subset $\cN'$ of dimension at most $d-1$, so that $\cN - \cN'$ is open and dense in a $d$-dimensional invariant subvariety.
\end{thm}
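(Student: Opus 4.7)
The plan is to combine the Eskin-Mirzakhani-Mohammadi theorem on orbit closures with the rigidity imposed by the hypothesized subspaces $S$. The crucial preliminary observation is that the hypotheses on $S$---being defined over a real number field and containing $\zeta$---force $S$ to be $\mathrm{GL}(2,\bR)$-invariant as a subspace of $H^1(Z,\Sigma;\bC)$. Writing $\zeta = x + iy$ with $x, y \in H^1(Z, \Sigma; \bR)$, the defined-over-$\bR$ hypothesis gives $\bar\zeta \in S$, hence $x, y, ix, iy \in S$ since $S$ is a complex subspace. A direct computation using the description of the $\mathrm{GL}(2,\bR)$-action on $H^1(Z, \Sigma; \bC)$ in terms of real and imaginary parts then yields $g \cdot S \subseteq S$ for every $g \in \mathrm{GL}(2,\bR)$.

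Next I would produce a candidate invariant subvariety $\cM$. Let $\cC$ be an irreducible component of $\cN$ of dimension at least $d$, which exists since constructible sets have finitely many irreducible components. For each $p \in \cC$, let $\cM_p$ denote the orbit closure of $p$, which by Eskin-Mirzakhani-Mohammadi and Filip is an affine invariant subvariety, and recall there are only countably many such subvarieties. Choose $\cM$ to be one of minimal dimension among those with $\cC \cap \cM$ of dimension $\dim \cC$ (the stratum itself qualifies, so such $\cM$ exists). The countable union $\bigcup_{\cM' \subsetneq \cM} (\cC \cap \cM')$ is a union of subvarieties each of dimension strictly less than $\dim \cC$ and hence cannot cover $\cC \cap \cM$, so there exists $p \in \cC \cap \cM$ with $\cM_p = \cM$; in fact the locus of such $p$ is dense in $\cC \cap \cM$. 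Thus $\dim \cM \geq \dim \cC \geq d$.

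The main step, and the expected main obstacle, is the reverse inequality $\dim \cM \leq d$. Picking $p = (Z, \zeta) \in \cC$ with $\cM_p = \cM$, the $\mathrm{GL}(2,\bR)$-orbit of $p$ is dense in $\cM$. In a period chart $U$ centered at $p$, which identifies a neighborhood of $p$ with an open set in $H^1(Z, \Sigma; \bC)$, the portion of the orbit lying in $U$ is contained in $S \cap U$ by the first paragraph. Since $S \cap U$ is closed in $U$, and any point of $\cM \cap U$ is approached by orbit points that may be chosen to lie in $U$ (by openness of $U$), one concludes $\cM \cap U \subseteq S \cap U$. By Eskin-Mirzakhani-Mohammadi, $\cM \cap U$ is an open subset of an affine subspace $\zeta + T$ in period coordinates, so the inclusion forces $T \subseteq S$ and $\dim \cM = \dim T \leq d$. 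Combined with the lower bound, $\dim \cM = d$.

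To conclude, $\overline{\cC}$ is a $d$-dimensional irreducible closed subset of $\cM$, hence an irreducible component. On this component $\cC$ is Zariski open and dense, so setting $\cN'$ to be the part of $\cN$ not lying in this open dense subset gives a Zariski closed subset of dimension at most $d-1$ with $\cN - \cN'$ open and dense in the $d$-dimensional invariant subvariety $\overline{\cC}$. The principal technical subtlety is the dimension bound: one must leverage both the affine structure of $\cM$ in period coordinates and the density of a single orbit in $\cM$, and convert this global density to a local density inside the period chart in order to transfer the inclusion orbit $\subseteq S$ to the inclusion of tangent spaces.
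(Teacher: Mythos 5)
Your preliminary steps are fine: a complex subspace $S$ defined over $\bR$ is indeed preserved by the linear $\mathrm{GL}(2,\bR)$-action on $H^1(Z,\Sigma;\bC)$, and the selection of an irreducible piece $\cC$ and of a point $p\in\cC\cap\cM$ with orbit closure exactly $\cM$ (via countability of invariant subvarieties) is legitimate. The genuine gap is in the main step, the bound $\dim\cM\leq d$. You assert that every point of the $\mathrm{GL}(2,\bR)$-orbit of $p$ that lies in the period chart $U$ has period coordinates in $S$. This does not follow from the invariance of $S$ as a linear subspace: if $g\cdot p\in U$ but the orbit segment from $p$ to $g\cdot p$ leaves $U$, then the period coordinates of $g\cdot p$ computed in the chart differ from $g\cdot\zeta$ by the holonomy of the Gauss--Manin connection around the loop formed by the orbit path and a return path inside $U$, and this monodromy transformation need not preserve $S$. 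Linearity only controls the local orbit (those $g$ for which the entire segment stays in $U$), which shows merely that the few orbit directions at $p$ lie in $S$, not that $\cM\cap U\subseteq S$. Nor can you apply the hypothesis at the distant orbit points, since the orbit of $p$ need not lie in $\cN$. The failure is real: in $\cH(2)$, for instance, there are surfaces whose period coordinates lie in a rational hyperplane $S$ but whose orbit is dense in the stratum, so the orbit returns to $U$ are dense in the chart and are certainly not confined to $S\cap U$. (Such a point alone does not satisfy the dimension hypothesis of the theorem, so this is not a counterexample to the statement, but it does show that your step, which uses the hypothesis only at the single point $p$, cannot be correct as written.)

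The paper's proof avoids this entirely and, in particular, never invokes Eskin--Mirzakhani--Mohammadi or orbit closures: it applies the hypothesis at \emph{every} point of $\cN_0\cap U$, uses that there are only countably many $d$-dimensional subspaces defined over real number fields, and runs a Baire category and Noetherian argument on the analytic set $\cN_0\cap U$ to show it is contained in, hence equal to, a finite union of such subspaces; irreducibility then makes $\cN_0$ locally cut out by real-linear equations, which is the definition of an invariant subvariety, with $\mathrm{GL}(2,\bR)$-invariance coming for free. Any repair of your route would likewise have to exploit the hypothesis along a positive-dimensional family of points of $\cN$ near $p$ (for example, to show that the local branch of $\cN_0$ through $p$ already fills a $d$-dimensional subspace that must then coincide with the tangent space of $\cM$), rather than relying on the global density of a single orbit read in one period chart.
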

%DEFINITION: A locally closed set is one that is the intersection of a (Zariski) open and (Zariski) closed set.
%DEFINITION: a constructible set is a finite union of locally closed sets
\begin{proof}
By definition, a constructible set is a finite union of sets of the form $\cN_0 - \cN_1$ where $\cN_0$ is an irreducible Zariski closed set $\cN_0$ and $\cN_1$ is a smaller dimensional Zariski closed set. Suppose that such a set is contained in $\cN$. It suffices to show that, if $\dim(\cN_0) \geq d$, then $\cN_0$ is a $d$-dimensional invariant subvariety. Suppose that $Z$ is a translation surface with singularities $\Sigma$. Suppose that $Z$ belongs to a contractible open subset $U$ of the stratum, which, in period coordinates is identified\footnote{This identification inputs a holomorphic $1$-form $\omega$ on a Riemann surface $X$ that belongs to the stratum and returns the cohomology class of $\omega$, which is identified with an element of $H^1(Z, \Sigma)$ using the Gauss-Manin connection.} with an open subset $V \subseteq H^1(Z, \Sigma)$. Let $\cS$ be the countable collection of $d$-dimensional subspaces of $H^1(Z, \Sigma)$ defined over a real number field. Since $M := U \cap \cN_0$, is analytic, the Noetherian property implies that there is a finite collection $\{S_i\}_{i=1}^m$ of elements of $\cS$ whose intersection with $M$ is open (and hence contained) in $M$. By assumption, $\bigcup_{S \in \cS} S$ contains every point in $M - \cN_1$. So by the Baire Category Theorem, $\bigcup_{i=1}^m S_i$ is dense in $M - \cN_1$ and hence in $M$. Since $\bigcup_{i=1}^m S_i$ is closed in $U$, it coincides with $M$. So $\cN_0$ is $d$-dimensional and defined, in local period coordinates, by real linear equations, which is the definition of an invariant subvariety. 
\end{proof}
%%%%%%%%%%%%%%%%
%
% DO NOT DELETE:
% Two comments on density
%
%%%%%%%%%%%%%%%%%
%
% The use of Baire category: Baire category says that if a complete metric space (for instance closures of open subsets of M) can be written as a union of closed sets (for instance S_i cap M) then some of those S_i have interior. So once we throw out all S_i with interior from M, the resulting set has no interior, i.e. the union of the S_i was dense. 
%
%Per MathOverflow: Here is a theorem from the "Complex Varieties" section of Mumford's "Red Book of Varieties and Schemes" (2nd edition): Theorem: Let X be a[n irreducible] complex variety and U⊆X a nonempty (Zariski) open subvariety. Then U is dense in X for the analytic topology.
%
%%%%%%%%%%%%%%%%
%
% END DO NOT DELETE:
%
%%%%%%%%%%%%%%%%%

\begin{proof}[Proof of Proposition \ref{P:HHConstructionCriterion}:]
By Hodge compatibility, 
\[ \dim \cM_i = r+ \dim \mathrm{Hur}_\sigma = r+(n-3). \] 
By Chevalley's theorem, $\pi(\cM)$ is constructible and, by assumption,
\[ d := \dim_\bC \pi(\cM) = (n+r-3)-d_1-d_2  \geq 2r + n_i\rho_i.\]
By Proposition \ref{P:CohomologyDecomposition} and Lemma \ref{L:RelMultiplicity}, for every $(Z, \zeta) \in \pi(\cM)$, with zeros $\Sigma$ and preimages $\Sigma_{sp}$ of special branch points, there is a $(2r + n_i\rho_i)$ dimensional subspace $(M_i^H)^{m_i+\rho_i} \subseteq H^1(Z, \Sigma_{sp})$ that contains $\zeta$. This subspace is defined over $\bR$ by assumption. Moreover, by assumption, $\Sigma \subseteq \Sigma_{sp}$, so the same statement holds when $H^1(Z, \Sigma_{sp})$ is replaced with $H^1(Z, \Sigma)$. So there is a proper closed subset $\cN' \subseteq \pi(\cM)$ so that $\pi(\cM) - \cN'$ is open and dense in an invariant subvariety $\cN$ by Theorem \ref{T:MMW-Criterion}. The preimage of $\cN'$ is a proper closed subset $\cM'$ of $\cM$. Since $\cM$ is an irreducible component, $\cM - \cM'$ is dense in $\cM$ in the Euclidean topology.  So $\pi(\cM)$ is contained in $\cN$. Since the projection of the tangent space of $\pi(\cM)$ at $(Z, \zeta)$ is $(M_i^H)^{m_i}$ the rank of $\pi(\cM)$ is $r$ and hence its rel is $n_i \rho_i$.
\end{proof}
%%%%%%%%%%%%%%%%
%
% DO NOT DELETE:
% Two comments on algebraic geometry
%
%%%%%%%%%%%%%%%%%
%
%IMPORTANT: A ZARISKI OPEN IS DENSE IN THE EUCLIDEAN TOPOLOGY. SEE FOR INSTANCE MUMFORD'S RED BOOK (https://mathoverflow.net/questions/15569/zariski-open-sets-are-dense-in-analytic-topology).
%IMPORTANT: FIBER DIMENSION IS WELL-DEFINED (SEE BRIAN CONRAD'S NOTES - http://virtualmath1.stanford.edu/~conrad/145Page/handouts/fiberdim.pdf)
%
%%%%%%%%%%%%%%%%
%
% END DO NOT DELETE:
%
%%%%%%%%%%%%%%%%%

An invariant subvariety produced using Proposition \ref{P:HHConstructionCriterion} will be called a \emph{Hurwitz-Hecke construction}.

\section{The field of definition and Hodge compatibility}

We continue to use the notation of Section \ref{S:HHConstructions}. Throughout this section we will suppose that $A$ is abelian and normal. The main result of this section is the following.

\begin{prop}\label{P:RMConstants}
Let $A$ be abelian and normal. Then, for all $i$, $M_i$ is spanned by a collection $\Phi_i$ of characters of $A$ and $n_i = 1$. 

Moreover, the smallest field over which $\cV_i$ is defined is $\bQ(S_i)$ where $S_i := \left\{ \frac{1}{|\Phi_i|} \sum_{\chi \in \Phi_i} \chi(a) : a \in A \right\}$. 

Finally, if the set of characters in $\Phi_i$ is invariant under complex conjugation then $\cV_i$ is Hodge-compatible. 
\end{prop}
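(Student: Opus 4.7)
The plan is to reduce everything to explicit character computations on $A$ using the semi-direct product structure $G = A \rtimes H$. For the structural claim on $M_i$, because $A$ is abelian, $\bC[A]$ decomposes as an $A$-representation into character lines $\bigoplus_\chi \bC_\chi$, each appearing with multiplicity one. Since $G = A \rtimes H$, the subgroup $H$ permutes the characters of $A$ via $h \cdot \chi := \chi \circ h^{-1}$, and a subspace of $\bC[A]$ is $G$-invariant iff it is both $A$-invariant (a sum of character lines) and $H$-invariant (closed under the $H$-action). The $G$-irreducibles are therefore exactly the spans of single $H$-orbits $\Phi_i$, so $M_i = \mathrm{span}(\Phi_i)$ and $n_i = 1$.

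For the field of definition, I would use the $A$-character idempotents $e_\chi := \frac{1}{|A|}\sum_{a \in A} \chi(a^{-1}) e_a \in \bC[A]$. Their sum
\[ e_i := \sum_{\chi \in \Phi_i} e_\chi = \frac{1}{|A|}\sum_{a \in A}\Big(\sum_{\chi \in \Phi_i}\chi(a^{-1})\Big) e_a \]
is an idempotent in $\bC[A] \subseteq \bC[G]$ that projects any $G$-representation onto its $M_i$-isotypic summand. Its coefficients in the $\bQ$-rational basis $\{e_a\}_{a \in A}$ of $\bQ[A]$ generate $\bQ(S_i)$ (the map $a \mapsto a^{-1}$ is a bijection of $A$ and $|\Phi_i|/|A| \in \bQ$). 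Applying $e_i$ to the $G$-stable $\bQ$-structure $H^1(X,\wt{\Sigma}_{sp};\bQ)$ therefore produces a $\bQ(S_i)$-subspace whose complexification is the $M_i$-isotypic of $H^1(X,\wt{\Sigma}_{sp};\bC)$, and averaging over $H$ (still a $\bQ(S_i)$-operation) yields a $\bQ(S_i)$-basis of $\cV_i$ in each fiber over $\mathrm{Hur}_\sigma$. For minimality, Frobenius reciprocity combined with $n_i = 1$ shows that $M_i^H$ is the one-dimensional line spanned by $\sum_{\chi \in \Phi_i} e_\chi$; normalizing its $e_1$-coordinate to $1$ forces the remaining coordinates to be the elements of $S_i$, so any field of definition must contain $\bQ(S_i)$.

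For Hodge compatibility, I would use that $G$ acts on $X$ by biholomorphisms, so the Hodge decomposition $H^1(X,\bC) = H^{1,0}(X) \oplus H^{0,1}(X)$ is $G$-equivariant. Each $G$-isotypic component therefore splits into its $(1,0)$ and $(0,1)$ pieces, and this splitting is preserved by taking $H$-invariants, giving $p(\cV_i) = (M_i^H)^{m_i} = p(\cV_i^{1,0}) \oplus p(\cV_i^{0,1})$ automatically. Equality of dimensions then follows as soon as $p(\cV_i)$ is stable under complex conjugation on $H^1(Y,\bC)$, since conjugation swaps the two summands. Because $\overline{\chi(a)} = \chi(a^{-1})$ for every character of the finite group $A$, the hypothesis $\overline{\Phi_i} = \Phi_i$ gives $\sum_{\chi \in \Phi_i}\chi(a) \in \bR$ for all $a$, so $S_i \subseteq \bR$; by the previous paragraph $\cV_i$ is defined over $\bR$, hence conjugation-stable.

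The most delicate step is the minimality clause for the field of definition: one must ensure that no clever choice of basis over a proper subfield of $\bQ(S_i)$ can realize $\cV_i$. The argument above handles this by pinning down the $H$-invariant line in $M_i$ uniquely (using $n_i=1$) and then extracting invariant coordinate ratios, but carrying this over from a single fiber to the whole bundle is where one must be most attentive.
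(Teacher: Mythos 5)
The genuine gap is in the minimality clause of the field-of-definition statement. Your argument pins down the $H$-invariant line inside $M_i \subseteq \bC[A]$ and observes that, in the tautological basis of $\bQ[A]$, its coordinate ratios generate $\bQ(S_i)$. But ``defined over $k$'' in the proposition means that the fibers of $\cV_i$ admit bases in $H^1(Y,\Sigma_{sp};k)$, i.e.\ it is measured against the topological rational structure of relative cohomology, not against the standard rational structure of the group algebra. A subspace of $H^1(Y,\Sigma_{sp};\bC)$ that is abstractly isomorphic to $(M_i^H)^{m_i+\rho_i}$ could a priori be spanned by classes over a smaller field; nothing in your coordinate computation rules this out. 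The bridge the paper uses is the Hecke action: the double cosets $HaH$ act on $H^1(Y,\Sigma_{sp})$ by correspondences preserving the $\bQ$-structure, elements of the fiber of $\cV_i$ are eigenvectors with eigenvalue $\frac{1}{|\Phi_i|}\sum_{\chi\in\Phi_i}\chi(a)$ (Lemma \ref{L:RMConstants}), and an eigenvector of a $\bQ$-rational operator lying in $H^1(Y,\Sigma_{sp};k)$ forces its eigenvalue into $k$, whence $k \supseteq \bQ(S_i)$ (Lemma \ref{L:FieldOfDefinition}); a Galois-stability argument on the collection $\{\cV_j\}$ would also work. Your closing caveat locates the delicacy at ``fiber to bundle,'' but the missing step is already at the level of a single fiber: transferring the computation in $\bC[A]$ to the rational structure on cohomology.

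The rest is essentially sound, and partly a genuinely different route, but one intermediate claim is false as stated: $e_i=\sum_{\chi\in\Phi_i}e_\chi$ does \emph{not} project a general $G$-module onto its $M_i$-isotypic summand. When some $\chi\in\Phi_i$ has nontrivial stabilizer $H_0\leq H$, the irreducibles $\mathrm{Ind}_{AH_0}^G(\chi\otimes\rho)$ with $\rho$ a nontrivial irreducible of $H_0$ also restrict to characters in $\Phi_i$ and can occur in $H^1(X,\wt{\Sigma}_{sp};\bC)$, so $e_i$ cuts out a possibly larger subspace. Your conclusion survives because after applying $\pi_H$ these extra constituents vanish: they do not occur in $\bC[G/H]=\bC[A]$, hence have no $H$-invariants; since the proposition is asserted for every $i$, including orbits with nontrivial stabilizer, this point needs to be said. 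With that repair, projecting the $\bQ$-structure by the $\bQ(S_i)$-rational operator $\pi_H e_i$ is a clean alternative to the paper's explicit induced-module and cochain construction (Lemma \ref{L:DefinedOverK} and Corollary \ref{C:DefinedOverK}), and your deduction of Hodge compatibility from $S_i\subseteq\bR$ and the resulting conjugation-stability of $\cV_i$, rather than from the character computation of Lemmas \ref{L:CharacterValues} and \ref{L:HodgeCompatbility}, is fine once the field-of-definition upper bound is in place.
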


We will develop the proof over the following sequence of lemmas. 

\begin{lem}\label{L:SpannedByCharacters}
For all $i$, $M_i$ is spanned by a collection of characters $\Phi_i$ and $\dim M_i^H = n_i = 1$.     
\end{lem}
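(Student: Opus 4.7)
The plan is to use the simply transitive action of $A$ to decompose $\bC[A]$ into one-dimensional $A$-eigenlines indexed by $\hat{A}$, and then to show that $H$ permutes these eigenlines via the induced action on $\hat{A}$, with orbits corresponding to $G$-irreducibles. First, I would pick the identity $e \in A$ as a basepoint and use the simply transitive action of $A$ on itself to identify $A$ with the $G$-set on which $G$ acts. Using that $H$ stabilizes $e$ and $A$ is normal in $G$, a short computation gives $h \cdot a = hah^{-1}$ for $h \in H$ and $a \in A$, so $H$ acts on $A$ by group automorphisms.

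Since $A$ is abelian, each character $\chi \in \hat{A}$, viewed as an element of $\bC[A]$, spans a one-dimensional $A$-eigenline $L_\chi$, and $\bC[A] = \bigoplus_{\chi \in \hat{A}} L_\chi$ is the $A$-isotypic decomposition. A direct computation of the $H$-action on these functions yields $h \cdot L_\chi = L_{\chi^h}$, where $\chi^h(a) := \chi(h^{-1}ah)$, so $H$ permutes the eigenlines along its orbits on $\hat{A}$.

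For each $H$-orbit $\Phi \subseteq \hat{A}$, the subspace $M_\Phi := \bigoplus_{\chi \in \Phi} L_\chi$ is $G$-stable. I would check $G$-irreducibility as follows: any $A$-subrepresentation is a direct sum of some $L_\chi$'s (by distinctness of the $A$-weights), and $H$-stability forces the indexing set to be an $H$-stable subset of $\Phi$, hence empty or all of $\Phi$. Different $H$-orbits $\Phi \ne \Phi'$ give $G$-representations with disjoint sets of $A$-weights, so $M_\Phi \not\cong M_{\Phi'}$; thus each $M_\Phi$ appears in $\bC[A]$ with multiplicity $n_\Phi = 1$. Combining with Equation \ref{E:OneDimInvariants} then yields $\dim M_\Phi^H = n_\Phi = 1$. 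The only place any real care is needed is in fixing conventions so that the characters themselves are the eigenvectors; once that is in place, the lemma reduces to the standard representation theory of a semidirect product with an abelian normal subgroup.
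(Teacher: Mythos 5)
Your proof is correct and rests on the same two ingredients as the paper's: the multiplicity-one decomposition of $\bC[A]$ into $A$-character lines permuted by $H$, which forces $n_i=1$, together with Equation \ref{E:OneDimInvariants} (Frobenius reciprocity) to get $\dim M_i^H = n_i$. The only difference is that you additionally verify, little-group style, that each $\Phi_i$ is a full $H$-orbit whose span is irreducible — slightly more than the lemma asserts, and a fact the paper only makes explicit later (e.g.\ in Lemma \ref{L:DefinedOverK}) — but the underlying argument is the same.
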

\begin{proof}
Since $M_i$ is a $G$-subrepresentation of $\bC[A]$, it is a sum of $A$-representations. Since $A$ is abelian, $M_i$ is spanned by a collection of characters $\Phi_i$. Since the representation corresponding to each character has multiplicity one in $\bC[A]$, each $M_i$ also has multiplicity one in $\bC[A]$, i.e. $n_i = 1$. By Equation \ref{E:OneDimInvariants}, $\dim M_i^H = 1$.  
\end{proof}

Let $f: Y \ra \bP^1$ be a branched cover with monodromy $\sigma$ and let $\phi: X \ra Y$ be its Galois closure. Let $\pi_H = \frac{1}{|H|} \sum_{h \in H} h \in \bC[H]$ and identify $H^1(Y)$ with the $H$-invariant subspace of $H^1(X)$. Since $G$ acts on $H^1(X)$, $\bC[H\backslash G /H]$ acts on $H^1(Y)$ by having $HgH$ act by $\pi_H g \pi_H$ for any $g \in G$. Recall that $\Sigma_{sp} \subseteq Y$ is the set of preimages of special branch points. 

\begin{lem}\label{L:RMConstants}
For all $i$, $M_i^H$ is spanned by $\sum_{\chi \in \Phi_i} \chi$. Moreover, if $a \in A$, then the elements of $(M_i^H)^{m_i+\rho_i} \subseteq H^1(Y, \Sigma_{sp}; \bC)$ are eigenforms of $HaH$ with eigenvalue $\frac{1}{|\Phi_i|} \sum_{\chi \in \Phi_i} \chi(a)$.
\end{lem}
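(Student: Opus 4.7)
The first claim is immediate. Since $M_i$ is $G$-stable and $H \subseteq G$ permutes the characters of $A$ (via the dual of its action on $A$), the set $\Phi_i$ is $H$-stable, and therefore $v := \sum_{\chi \in \Phi_i} \chi$ is a nonzero $H$-fixed vector in $M_i$; by Lemma \ref{L:SpannedByCharacters} it spans the one-dimensional subspace $M_i^H$.

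For the eigenvalue statement, the plan is to transport the computation from $H^1(Y, \Sigma_{sp}; \bC)$ to an abstract calculation inside $\bC[A]$. Combining Proposition \ref{P:CohomologyDecomposition} with Lemma \ref{L:RelMultiplicity} yields a $G$-equivariant identification $H^1(X, \wt{\Sigma}_{sp}; \bC) \cong \bigoplus_i M_i^{m_i + \rho_i}$, whose $H$-invariants give $H^1(Y, \Sigma_{sp}; \bC) \cong \bigoplus_i (M_i^H)^{m_i + \rho_i}$. The Hecke operator $HaH$ acts as $\pi_H a \pi_H$, which restricts to $\pi_H a$ on any $H$-invariant vector. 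So it suffices to show that $\pi_H a$ acts on the single line $M_i^H \subseteq M_i \subseteq \bC[A]$ by the scalar $\tfrac{1}{|\Phi_i|}\sum_{\chi \in \Phi_i}\chi(a)$.

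The remaining calculation is direct. Realize each character $\chi$ as the idempotent $e_\chi := \tfrac{1}{|A|}\sum_{b \in A}\overline{\chi(b)}[b] \in \bC[A]$, under which $b \cdot e_\chi = \chi(b) e_\chi$ for $b \in A$ and $h \cdot e_\chi = e_{h\chi}$ for $h \in H$, where $(h\chi)(b) := \chi(h^{-1}(b))$. Writing $v = \sum_{\chi \in \Phi_i} e_\chi$ and unwinding,
\[ \pi_H(a \cdot v) = \frac{1}{|H|}\sum_{h \in H}\sum_{\chi \in \Phi_i} \chi(a)\, e_{h\chi} = \sum_{\chi' \in \Phi_i} \left(\frac{1}{|H|}\sum_{h \in H} \chi'(h(a))\right) e_{\chi'}, \]
where the second equality uses the substitution $\chi' = h\chi$ (so $\chi(a) = \chi'(h(a))$) together with the $H$-stability of $\Phi_i$. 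Since $M_i$ is $G$-irreducible with $G = A \rtimes H$ and $A$ acts by scalars on each line $\bC e_\chi$, the set $\Phi_i$ is a single $H$-orbit; the change of variable $h \mapsto h_0^{-1}h$ then shows the parenthesized coefficient is independent of $\chi' \in \Phi_i$, and a further substitution $h \mapsto h^{-1}$ identifies this common value with $\tfrac{1}{|\Phi_i|}\sum_{\chi \in \Phi_i} \chi(a)$. Hence $\pi_H a v = \lambda v$ with the advertised eigenvalue, and the claim transports along the $G$-equivariant isomorphism above to all of $(M_i^H)^{m_i+\rho_i}$.

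The only mild obstacle is bookkeeping: keeping inverses straight in the $H$-action on characters, and confirming that $\Phi_i$ is a single $H$-orbit so that the coefficient of $e_{\chi'}$ is genuinely independent of $\chi'$. Neither step is substantive.
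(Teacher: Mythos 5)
Your argument is correct, and it shares the paper's basic strategy: reduce everything to computing the scalar by which $\pi_H a \pi_H$ acts on the one-dimensional space $M_i^H$ inside the model $\bC[A]$, and then transport that scalar, via Schur, to every copy of $M_i$ in $H^1(X,\wt{\Sigma}_{sp})$ and hence to $(M_i^H)^{m_i+\rho_i}\subseteq H^1(Y,\Sigma_{sp};\bC)$. The execution differs in both halves. For the first claim the paper uses Knapp's trick: $\sum_{\chi\in A^*}\chi$ is $|A|$ times the delta function at the identity, hence $H$-invariant, and comparing coefficients in its decomposition forces the coefficients within each $\Phi_i$ to be equal; you instead note directly that $H$ permutes $\Phi_i$ (because $M_i$ is $H$-stable and $H$ permutes characters), so $\sum_{\chi\in\Phi_i}\chi$ is a nonzero $H$-fixed vector spanning the one-dimensional $M_i^H$ — shorter and more transparent. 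For the eigenvalue, the paper computes $c=\langle \pi_H a\pi_H u_i,u_i\rangle_A/\langle u_i,u_i\rangle_A$ using self-adjointness of $\pi_H$ and orthonormality of characters, whereas you expand $\pi_H a$ in the idempotent basis and average over the orbit; your supporting observation that $\Phi_i$ is a single $H$-orbit (by irreducibility of $M_i$) is correct and is consistent with how the $\Phi_i$ are described later in the paper. The only caution is a convention point, not a gap: under the left-translation action on $\bC[A]$, the idempotent $e_\chi$ spans the line of the character function $\overline{\chi}$ rather than $\chi$, so taken literally your $v=\sum_{\chi\in\Phi_i}e_\chi$ spans the conjugate block and the eigenvalue comes out as $\frac{1}{|\Phi_i|}\sum_{\chi\in\Phi_i}\chi(a^{-1})$; the paper's own computation (which uses $a\cdot\chi=\chi(a)\chi$) makes the opposite convention choice, and since the set of these values over $a\in A$ is unchanged by $a\mapsto a^{-1}$, nothing downstream (the fields $\bQ(S_i)$, the quantities $\bigl|\frac{1}{|\Phi_i|}\sum_{\chi\in\Phi_i}\chi(a)\bigr|^2$, the Hodge-compatible case $\Phi_i=\overline{\Phi_i}$) is affected — a sentence fixing the convention would make your write-up airtight.
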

\begin{proof}
Let $A^*$ be the set of characters of $A$, which forms a basis of $\bC[A]$ since $A$ is abelian. Since $A$ is abelian, $\sum_{\chi \in A^*} \chi$ is the character of the regular $A$-representation, i.e. the function that sends the identity to $|A|$ and all other elements to $0$. This function is $H$-invariant, since $H$ fixes the identity. Suppose that $M_i^H$ is spanned by $\sum_{\chi \in \Phi_i} a_\chi \chi$ where $(a_\chi)_{\chi \in \Phi_i}$ are constants. Then we have, for some choice of constants $b_i$, 
\[ \sum_{\chi \in A^*} \chi = \sum_i b_i \sum_{\chi \in \Phi_i} a_{\chi} \chi. \]
Since $(\chi)_{\chi \in A^*}$ is a basis for $\bC[A]$, by comparing coefficients we see that $a_{\chi}$ is the same for all $\chi \in \Phi_i$. (This argument is due to Knapp \cite[Theorem 1.1 (3)]{Knapp-OnBurnside}.) 

Now we turn to the second claim. We have already shown that the elements of $M_i^H$ are multiples of $u_i := \sum_{\chi \in \Phi_i} \chi$. Therefore, $\pi_H a \pi_H u_i = c u_i$ where $c = \frac{\langle \pi_H a \pi_H u_i, u_i \rangle_A}{\langle u_i, u_i \rangle_A}$. Since $\pi_H$ is self-adjoint with respect to $\langle \cdot, \cdot \rangle_A$ and $u_i$ is $\pi_H$-invariant, we have
\[ c = \frac{\langle \sum_{\chi \in \Phi_i} \chi(a) \chi, \sum_{\chi \in \Phi_i} \chi \rangle_A}{\langle \sum_{\chi \in \Phi_i} \chi, \sum_{\chi \in \Phi_i} \chi \rangle_A} = \frac{1}{|\Phi_i|} \sum_{\chi \in \Phi_i} \chi(a). \]
\end{proof}

In the sequel we will work with elements of $\bC[G]$, i.e. functions from $G$ to $\bC$. Since $G = HA$ and since $H \cap A = \{\mathrm{id}\}$, there is a bijection from $H \times A$ to $G$ that sends $(h,a)$ to $ha$. We will allow ourselves to write $f \in \bC[G]$ as $f(h,a)$, which we think of as a function from $H \times A$ to $\bC$. Finally, given a representation $V$ of a subgroup $J$ of $G$, let $\mathrm{Ind}_J^G(V)$ be the induced $G$-representation.  

\begin{lem}\label{L:DefinedOverK}
Fix $\chi \in \Phi_i$ and let $H_0$ be the set of $h \in H$ so that $\chi(h^{-1}ah) = \chi(a)$ for all $a \in A$. Consider the $H_0A$-representation $V := \mathrm{span}(\chi) \subseteq \bC[A]$. Then $M_i$ is isomorphic to $\mathrm{Ind}_{H_0A}^G(V)$. 

Moreover, for each $gH_0 \in H/H_0$, define $f_{\chi, gH_0}(h,a) := \chi(g^{-1}ha^{-1}h^{-1}g)$. Then $\mathrm{span}\left\{ f_{\chi, gH_0} \right\}_{gH_0 \in H/H_0} \subseteq \bC[G]$ is a $G$-invariant subspace that is isomorphic to $M_i$ as a $G$-representation. 
\end{lem}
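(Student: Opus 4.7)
The plan is to establish the two assertions in sequence: the first by Frobenius reciprocity, the second by a direct verification of the formula. Both hinge on the $H_0A$-module structure of $V$. Since $A$ is abelian and $\chi$ is a character, any $a \in A$ acts on $\chi \in \bC[A]$ by the scalar $\chi(a)$; and by the very definition of $H_0$, every $h_0 \in H_0$ fixes $\chi$. Hence $V = \mathrm{span}(\chi)$ is genuinely an $H_0A$-subrepresentation of $\bC[A]$, so its inclusion $V \hookrightarrow M_i$ is $H_0A$-equivariant and, by Frobenius reciprocity, induces a canonical $G$-equivariant map $\Psi : \mathrm{Ind}_{H_0A}^G V \to M_i$.

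For the first claim I would verify that $\Psi$ is an isomorphism by showing surjectivity and matching dimensions. The image of $\Psi$ is a $G$-invariant subspace of $M_i$ containing $\chi$, hence also contains the entire $H$-orbit $\Phi_i$ of $\chi$; since $\Phi_i$ spans $M_i$ by Lemma \ref{L:SpannedByCharacters}, the map is surjective. Since $\Phi_i \cong H/H_0$ as $H$-sets, the orbit-stabilizer theorem gives $|\Phi_i| = [H : H_0] = [G : H_0A]$, and therefore $\dim \mathrm{Ind}_{H_0A}^G V = [G : H_0A] \cdot \dim V = |\Phi_i| = \dim M_i$.

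For the second claim I would proceed in three steps. Step one: check that $f_{\chi, gH_0}$ depends only on the coset $gH_0$. If $g$ is replaced by $gh_0$ with $h_0 \in H_0$, the argument of $\chi$ becomes $h_0^{-1}(g^{-1} h a^{-1} h^{-1} g) h_0$; the inner expression lies in $A$ by normality of $A$, and then conjugation by $h_0$ fixes its $\chi$-value by definition of $H_0$. Step two: compute the $G$-action on these functions using the decomposition $G = HA$, the semidirect product multiplication, and the character property of $\chi$. The upshot should be that $h' \in H$ permutes them by $h' \cdot f_{\chi, gH_0} = f_{\chi, h'gH_0}$ while $a' \in A$ acts on $f_{\chi, gH_0}$ as the scalar $\chi^g(a') := \chi(g^{-1} a' g)$, which makes the span $G$-invariant. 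Step three: match this span to $\mathrm{Ind}_{H_0A}^G V$. The basis $\{f_{\chi,gH_0}\}_{gH_0 \in H/H_0}$ corresponds to the natural basis $\{g \otimes \chi\}$ of the induced representation; the $H$-actions agree by permutation of cosets, and the $A$-actions agree because in both cases $a'$ acts on the $gH_0$-component by the twisted character $\chi^g$.

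The main obstacle I expect is bookkeeping: pinning down the conventions (left versus right action of $G$ on $\bC[G]$, the semidirect product multiplication $(h_1 a_1)(h_2 a_2) = h_1 h_2 (h_2^{-1} a_1 h_2) a_2$, and the sign appearing as $a^{-1}$ in the defining formula) so that the explicit formula for $f_{\chi,gH_0}$ matches a standard realization of $\mathrm{Ind}_{H_0A}^G V$ inside $\bC[G]$. Once the conventions are fixed the verifications are routine, using only multiplicativity of $\chi$ on $A$ and normality of $A$ in $G$.
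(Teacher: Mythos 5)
Your proposal is correct and follows essentially the same route as the paper: Frobenius reciprocity plus the dimension count $\dim \mathrm{Ind}_{H_0A}^G(V) = [G:H_0A] = [H:H_0] = |\Phi_i| = \dim M_i$ for the first claim, and a direct verification that $h'$ permutes the functions $f_{\chi,gH_0}$ by translating the coset while $a'$ acts on $f_{\chi,gH_0}$ by the scalar $\chi(g^{-1}a'g)$, matching the standard basis of the induced module, for the second. The only cosmetic difference is that you prove the first claim via surjectivity of the canonical map coming from the inclusion $V \subseteq M_i$ rather than the paper's computation $\langle M_i, \mathrm{Ind}_{H_0A}^G(V)\rangle_G \geq 1$, which is the same idea.
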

\begin{proof}
%The action of $H_0A$ on $V := \mathrm{span}(\chi) \subseteq \bC[A]$ is given by $(ha) \cdot \chi = \overline{\chi(a)} \chi$ when $h \in H_0, a \in A$.
Since $\dim \mathrm{Ind}_{H_0A}^G(V) = \dim M_i$ and since $M_i$ is irreducible, $\langle M_i, \mathrm{Ind}_{H_0A}^G(V) \rangle_G$ is $1$ if the two representations are isomorphic and $0$ otherwise. The former holds since 
\[ \langle M_i, \mathrm{Ind}_{H_0A}^G(V) \rangle_G = \langle \mathrm{Res}_{H_0A} M_i, V \rangle_{H_0A} \geq 1  \]
where the equality is by Frobenius reciprocity and the inequality is since $V \subseteq M_i$. 
%Therefore, $M_i$ and $\mathrm{Ind}_{H_0A}^G(V)$ are isomorphic. 

By definition, $\mathrm{Ind}_{H_0A}^G(V)$ has a basis indexed by elements of $H/H_0$, and, if $h \in H$ and $a \in A$, then 
\[ (ha) \cdot \sum_{gH_0 \in H/H_0} c_{gH_0} gH_0 = \sum_{gH_0 \in H/H_0} c_{gH_0} \chi(g^{-1}ag) \left( hgH_0 \right), \]
for any constants $c_{gH_0}$. It suffices to show that the linear map from $\mathrm{span}\left\{ f_{\chi, gH_0} \right\}_{gH_0 \in H/H_0}$ to $\mathrm{Ind}_{H_0A}^G(V)$ that sends $f_{\chi, gH_0}$ to $gH_0$ is $G$-equivariant. We see that for $h_2, h \in H, a_2 \in A$, 
\[ (h_2a_2)^* f_{\chi, hH_0} = (h_2a_2) \cdot \sum_{h_1 \in H, a_1 \in A} \chi(h^{-1}h_1a_1^{-1}h_1^{-1}h) h_1a_1. \] 
Setting $h_3 = h_2h_1$ and $a_3 = h_1^{-1}a_2h_1 a_1$ and using that 
\[ \left( h^{-1}h_2^{-1}h_3 a_3^{-1} h_3^{-1} h_2 h \right) \left( h^{-1} a_2 h \right) = h^{-1}h_1a_1^{-1}h_1^{-1} h \]
we have that 
\[ (h_2a_2)^* f_{\chi, hH_0} = \chi(h^{-1}a_2h) \sum_{h_3 \in H, a_3 \in A} \chi(h^{-1}h_2^{-1}h_3 a_3^{-1} h_3^{-1} h_2 h) h_3 a_3 = \chi(h^{-1}a_2 h) f_{\chi, h_2hH_0} \]
which establishes equivariance.
\end{proof}

\begin{cor}\label{C:DefinedOverK}
There is a basis of $(M_i^H)^{\dim M_i} \subseteq \bC[G]^H$ of functions only taking values in $S_i$. 
\end{cor}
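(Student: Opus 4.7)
The plan is to use Lemma \ref{L:DefinedOverK} to extract one left-$H$-invariant function in the $M_i$-isotypic component of $\bC[G]$ whose values lie in $S_i$, and then translate it by the right-regular $G$-action — which commutes with the left $G$-action — to build up a full basis of $(M_i^H)^{\dim M_i}$ consisting of $S_i$-valued functions.

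First, for any fixed $\chi \in \Phi_i$, I would set $v_\chi := \sum_{gH_0 \in H/H_0} f_{\chi, gH_0}$ inside the copy $W_\chi \cong M_i$ provided by Lemma \ref{L:DefinedOverK}. The identity $h_2 \cdot f_{\chi, hH_0} = f_{\chi, h_2 h H_0}$ from the proof of that lemma shows $v_\chi$ is left-$H$-invariant, and the substitution $\gamma = g^{-1}h$, which permutes the right cosets of $H_0$ in $H$, yields
\[
v_\chi(h,a) = \sum_{gH_0 \in H/H_0} \chi(g^{-1}ha^{-1}h^{-1}g) = \sum_{\psi \in \Phi_i}\psi(a^{-1}) = |\Phi_i|\cdot \overline{s(a)},
\]
where $s(a) := \frac{1}{|\Phi_i|}\sum_{\psi \in \Phi_i}\psi(a)$. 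Since $\overline{s(a)} = s(a^{-1})$ and $a^{-1} \in A$, the rescaled function $|\Phi_i|^{-1}v_\chi$ takes values in $S_i$.

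The main obstacle is that this formula manifestly does not depend on which $\chi \in \Phi_i$ is used, so varying $\chi$ inside its $H$-orbit produces the same single $H$-invariant vector rather than $\dim M_i$ linearly independent ones. To generate the rest, for each $b \in A$ I would set $\tilde v_b := |\Phi_i|^{-1} r_b v_\chi$, where $(r_b f)(x) := f(xb)$. Because right translation commutes with left $G$-multiplication, $r_b$ preserves both the left-$G$-subrepresentation structure and left-$H$-invariance: $\tilde v_b$ sits inside the copy $r_b W_\chi \cong M_i$ of $M_i$ in the $M_i$-isotypic component, and remains left-$H$-invariant, so $\tilde v_b \in (M_i^H)^{\dim M_i}$. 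Using $(h,a)\cdot b = (h,ab)$ in the $H \ltimes A$ description, one computes $\tilde v_b(h,a) = s((ab)^{-1}) \in S_i$.

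To conclude, I would view each $\tilde v_b$ as a function on $H\backslash G \cong A$, presenting it as $|\Phi_i|^{-1}\sum_{\psi \in \Phi_i}\psi(b^{-1})\bar\psi$. Linear independence of the distinct characters in $\Phi_i$ forces the coefficient vectors $\{(\psi(b^{-1}))_{\psi \in \Phi_i}\}_{b \in A}$ to span $\bC^{|\Phi_i|}$, so $\{\tilde v_b\}_{b \in A}$ spans a subspace of dimension $|\Phi_i|$ inside $(M_i^H)^{\dim M_i}$. Because $(M_i^H)^{\dim M_i}$ itself has dimension $|\Phi_i| = \dim M_i$ by Lemma \ref{L:SpannedByCharacters}, these two spaces coincide, and a linear-algebra choice of $B \subseteq A$ with $|B| = \dim M_i$ and $\{\tilde v_b\}_{b \in B}$ linearly independent furnishes the desired basis of $S_i$-valued functions.
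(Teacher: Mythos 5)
Your proposal is correct and follows essentially the same route as the paper: your $H$-averaged vector $|\Phi_i|^{-1}v_\chi$ is exactly the paper's $\pi_H f_{\chi,H_0}$, and in both arguments the remaining basis vectors come from right translates, which commute with the left $G$-action and do not change the set of values. The only (minor) difference is that you verify spanning using only right $A$-translates via linear independence of the characters $\bar\psi$, whereas the paper uses Schur's lemma to see that all right $G$-translates span the isotypic component; both steps are sound.
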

\begin{proof}
Fix $\chi \in \Phi_i$ and set $B = \{f_{\chi, gH_0} : gH_0\in H/H_0 \}$. By Lemma \ref{L:DefinedOverK}, $W := \mathrm{span}(B)$ is isomorphic to $M_i$. By Schur's Lemma, the $M_i$-isotypic component of $\bC[G]$ can be written as $\mathrm{End}_{\bC[G]}(\bC[G]) \cdot W$. The endomorphisms of $\bC[G]$ (as a left $\bC[G]$-module) all have the form $f(x) = x \cdot a$ where $a$ is a fixed element of $\bC[G]$. It follows that the $M_i$-isotypic component of $\bC[G]$ is spanned by $\{b \cdot g : b \in B, g \in G\}$. 

Let $\pi_H := \frac{1}{|H|} \sum_{h \in H} h^*$. Then, for any $h' \in H$, $ \pi_H f_{\chi, h' H_0} = \frac{1}{|H|} \sum_{h \in H} f_{\chi, hH_0}$. If $h_1 \in H, a_1 \in A$, then this function sends $h_1a_1$ to $\frac{1}{|H|} \sum_{h \in H} \chi(h^{-1} a_1^{-1} h) \in S_i$. Since right multiplication by an element of $G$ does not change the image of a function, the image of every function in $\{\pi_H(b) \cdot g : b \in B, g \in G\}$, which spans $(M_i^H)^{\dim M_i}$, is contained in $S_i$.
%
% By Corollary \ref{C:DefinedOverK}, a spanning set of $M_i^{\dim M_i}$ in $\bC[G]$ is given by
% \[ \{ f_{\chi, h'H_0} \cdot g | \chi \in \Phi_i, h' \in H, g \in G\}. \]
% It follows that a spanning set of the $H$-invariants of $M_i^{\dim M_i}$ is given by 
% \[ \{ \pi_H f_{\chi, h'H_0} \cdot g | \chi \in \Phi_i, h' \in H, g \in G\} \]
% each of which is a function that only takes values in $S_i$.
\end{proof}

%GOOD REFERENCE FOR THE ENDOMORPHISM FACT IS LEMMA 2.1.2 in https://www-users.cse.umn.edu/~webb/RepBook/RepBookLatex.pdf
%
%PROOF: Let End be the endomorphisms of C[G] as a left C[G]-module. Consider the map F: C[G] -> End that sends a to the map f(x) = xa and the map G: End -> C[G] that sends an endomorphism f to f(e). Clearly, F(G) = id and G(F) = id since any endomorphism of a  left C[G]-module that can be written as C[G].x0 is determined by where x0 is sent (so set x0 = e). So F and G are bijections and inverses of each other. QED

\begin{lem}\label{L:FieldOfDefinition}
$\cV_i$ is defined over $\bQ(S_i)$ and this is the smallest such field.  
\end{lem}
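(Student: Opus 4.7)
The plan is to characterize the fiber of $\cV_i$ as a common eigenspace for Hecke operators whose defining equations have coefficients in $\bQ(S_i)$. On the Galois closure $\phi: X \ra Y$, the group $G$ acts on $H^1(X, \wt{\Sigma_{sp}}; \bZ)$ by deck transformations, and the idempotent $\pi_H := \frac{1}{|H|}\sum_{h \in H} h$ identifies $H^1(Y, \Sigma_{sp}; \bQ)$ with the $H$-invariants in $H^1(X, \wt{\Sigma_{sp}}; \bQ)$. For each $a \in A$, the Hecke operator $HaH := \pi_H a \pi_H$ therefore acts $\bQ$-rationally on $H^1(Y, \Sigma_{sp}; \bC)$, and by Lemma \ref{L:RMConstants} acts on $\cV_i$ as the scalar $\lambda_i(a) := \frac{1}{|\Phi_i|}\sum_{\chi \in \Phi_i} \chi(a) \in S_i$.

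The crux of the argument will be upgrading this to the equality $\cV_i = \bigcap_{a \in A} \ker\bigl(HaH - \lambda_i(a)\,\mathrm{Id}\bigr)$ in $H^1(Y, \Sigma_{sp}; \bC)$. I would decompose the fiber into its $M_j^H$-isotypic summands $V_j$ --- on each of which $HaH$ acts as the scalar $\lambda_j(a)$ --- and observe that any common eigenvector is supported on those $j$ for which $\lambda_j \equiv \lambda_i$ as functions on $A$. The main technical point I expect to grapple with is the distinctness of these orbit-sum functions $\lambda_j$ for distinct $j$; this should follow from Artin's linear independence of the characters of $A$ applied to the disjoint nonempty subsets $\Phi_j$ of the character group, noting that a would-be coincidence $|\Phi_i|\sum_{\chi \in \Phi_j} \chi = |\Phi_j|\sum_{\chi \in \Phi_i} \chi$ produces a nontrivial integer relation among distinct characters. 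Once equality is established, each defining equation $HaH v = \lambda_i(a) v$ has coefficients in $\bQ(S_i)$, so the common eigenspace admits a basis in $H^1(Y, \Sigma_{sp}; \bQ(S_i))$ fiberwise, exhibiting the required $\bQ(S_i)$-structure on $\cV_i$.

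For minimality, I would suppose $\cV_i$ is defined over some $k \subseteq \bC$, pick any nonzero vector $v$ in its $k$-rational part in some fiber, and apply the (already $\bQ$-rational, hence $k$-rational) operator $HaH$ to deduce from $HaH v = \lambda_i(a) v$ that $\lambda_i(a) \in k$ for every $a \in A$. This forces $S_i \subseteq k$ and hence $\bQ(S_i) \subseteq k$, completing the proof.
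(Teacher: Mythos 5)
Your proposal is correct, but it takes a genuinely different route from the paper. The paper's proof is constructive: it uses the $G$-invariant triangulation from Lemma \ref{L:H1Character} to identify $C^1(X)$ with $\bC[G]^{3n-6}$, invokes Corollary \ref{C:DefinedOverK} to exhibit explicit $H$-invariant cochains taking values in $\{0\}\cup S_i$, and then pushes this basis through $\ker(\delta^1)=H^1(Y,\Sigma_{br};\bC)$ and the $\bQ$-rational quotient down to $H^1(Y,\Sigma_{sp};\bC)$, so that Lemma \ref{L:DefinedOverK} and Corollary \ref{C:DefinedOverK} carry the real content. You instead argue by descent: the fiber of $\cV_i$ is the simultaneous eigenspace of the $\bQ$-rational Hecke operators $\pi_H a \pi_H$ with eigenvalues $\lambda_i(a)\in S_i$, so it is cut out by linear equations over $\bQ(S_i)$ and hence spanned by its $\bQ(S_i)$-points; the key point you correctly isolate — that the orbit-sum functions $\lambda_j$ are pairwise distinct — follows from linear independence of characters of $A$ since the $\Phi_j$ are disjoint. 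For this to pin down exactly the fiber of $\cV_i$ you are implicitly using the full decomposition $H^1(Y,\Sigma_{sp};\bC)=\bigoplus_j (M_j^H)^{m_j+\rho_j}$ and the fact that only constituents of $\bC[A]$ contribute $H$-invariants (Frobenius reciprocity as in Equation \ref{E:OneDimInvariants}), together with Lemma \ref{L:RMConstants} applied to every index $j$, not just $i$; with that made explicit your equality of the fiber with the common eigenspace is sound. Your approach buys a shorter, less computational argument that bypasses the triangulation and the explicit $S_i$-valued functions of Corollary \ref{C:DefinedOverK}, in the spirit of the standard real-multiplication eigenform descent; the paper's approach buys an explicit rational basis at the cochain level. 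The minimality argument — a nonzero $k$-rational eigenvector of a $k$-rational operator forces its eigenvalue into $k$ — is the same in both.
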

\begin{proof}
Let $f: Y \ra \bP^1$ be a branched cover with monodromy $\sigma$ and let $X$ be its Galois closure. We will first show that $(M_i^H)^{m_i + \rho_i} \subseteq H^1(Y, \Sigma_{sp}; \bC)$ has a basis in $H^1(Y, \Sigma_{sp}; \bQ(S_i))$.

Using the $G$-invariant triangulation of $X$ in Lemma \ref{L:H1Character}, there is a collection of edges $\{e_j\}_{j=1}^{3n-6}$ whose $G$-orbits account for all edges in the triangulation of $X$. This allows us to identify the simplicial cochain group $C^1(X)$ with $\bC[G]^{3n-6}$. By Corollary \ref{C:DefinedOverK}, there is a basis of $(M_i^H)^{(3n-6)\dim M_i} \subseteq C^1(Y) = C^1(X)^H$ consisting of cochains that only sends edges to elements of $\{0\} \cup S_i$.

% Let $\pi_H := \frac{1}{|H|} \sum_{h \in H} h^*$. Then $ \pi_H f_{\chi, H_0} = \frac{1}{|H|} \sum_{h \in H} f_{\chi, hH_0}$. If $h_1 \in H, a_1 \in A$, then this function sends $h_1a_1$ to $\frac{1}{|H|} \sum_{h \in H} \chi(h^{-1} a_1 h)$, which belongs to $S_i$. By Corollary \ref{C:DefinedOverK}, a spanning set of $M_i^{\dim M_i}$ in $\bC[G]$ is given by
% \[ \{ f_{\chi, h'H_0} \cdot g | \chi \in \Phi_i, h' \in H, g \in G\}. \]
% It follows that a spanning set of the $H$-invariants of $M_i^{\dim M_i}$ is given by 
% \[ \{ \pi_H f_{\chi, h'H_0} \cdot g | \chi \in \Phi_i, h' \in H, g \in G\} \]
% each of which is a function that only takes values in $S_i$. 
%
% Therefore, the $M_i$-isotypic component of the simplicial cochain group $C^1(X)$ has a basis of cochains that only send edges to $0$ or an element of $S_i$. 

Since $\delta^1$ is defined over $\bQ$, its kernel restricted to $(M_i^H)^{3n-6}$ is defined over $\bQ(S_i)$. Note that $\ker(\delta^1) = H^1(Y, \Sigma_{br}; \bC)$ where $\Sigma_{br}$ (resp. $\wt{\Sigma_{br}}$) is the subset of $Y$ (resp. $X$) consisting of preimages of branch points. Therefore, if $d_i$ is the multiplicity of $M_i$ in $H^1(X, \wt{\Sigma_{br}}; \bC)$, then $(M_i^H)^{d_i} \subseteq H^1(Y, \Sigma_{br}; \bC)$ has a basis belonging to $H^1(Y, \Sigma_{br}; \bQ(S_i))$. The map to $H^1(Y, \Sigma_{sp}; \bC)$ is given by quotienting $H^1(Y, \Sigma_{br}; \bC)$ by the image, under $\delta^0$, of the subspace of $C^0(\Sigma_{br})$ of cochains that vanish on $\Sigma_{sp}$. Since this subspace and $\delta^0$ are defined over $\bQ$, it follows that $(M_i^H)^{m_i+\rho_i} \subseteq H^1(Y, \Sigma_{sp}; \bC)$ has a basis in $H^1(Y, \Sigma_{sp}; \bQ(S_i))$ as desired. 

Finally, the elements of $(M_i^H)^{m_i + \rho_i}$ are eigenforms for the action of $\bC[H \backslash G /H]$. The extension of $\bQ$ generated by the eigenvalues is $\bQ(S_i)$ by Lemma \ref{L:RMConstants}. It follows that if any eigenform belongs to $H^1(Y, \Sigma_{sp}; k)$ where $k$ is a field, then $k$ contains $\bQ(S_i)$ (a discussion of similar ideas appears in Wright \cite[Section 2.2]{Wsurvey}). Therefore, the smallest field over which $\cV_i$ is defined is $\bQ(S_i)$.
\end{proof}

\begin{lem}\label{L:CharacterValues}
Let $e$ be the exponent of $A$, i.e. the smallest positive integer so that $a^e = \mathrm{id}$ for all $a \in A$. Fix $\tau \in \mathrm{Gal}(\bQ(\zeta_e)/\bQ)$. Let $\chi_i$ be the character of $M_i$ as a $G$-representation. Then $\chi_i^\tau = \chi_i$ if and only if $\tau$ permutes the elements of $\Phi_i$.  
\end{lem}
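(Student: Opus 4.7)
The plan is to reduce everything to the restriction $\chi_i|_A$ using Lemma \ref{L:SpannedByCharacters}, which says that $M_i$ is the $\bC$-span of the characters in $\Phi_i$, and then exploit the fact that the Galois action and the $H$-action on $A^*$ commute because one acts on the ``target side'' of a character while the other acts on the ``source side.''

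First I would compute $\chi_i|_A$. Since $M_i$ decomposes as an $A$-representation into one-dimensional eigenlines indexed by the characters in $\Phi_i$, we immediately get $\chi_i|_A = \sum_{\chi \in \Phi_i} \chi$. This takes care of the ``only if'' direction: restricting the hypothesis $\chi_i^\tau = \chi_i$ to $A$ gives $\sum_{\chi \in \Phi_i} \chi^\tau = \sum_{\chi \in \Phi_i} \chi$ as functions on $A$, and since distinct characters of $A$ are linearly independent, this forces $\{\chi^\tau : \chi \in \Phi_i\} = \Phi_i$ as sets.

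For the ``if'' direction I need $\chi_i$ on a general $ha \in G$, not just on $A$. Since $A$ is normal, $H$ acts on $A^*$ by $(h,\chi) \mapsto \chi^h$ with $\chi^h(a) := \chi(h^{-1}ah)$, and $\Phi_i$ is a single $H$-orbit. Choosing a basis $\{v_\chi\}_{\chi\in\Phi_i}$ of $M_i$ consisting of $A$-eigenvectors, I would verify (as in the derivation of the $H$-action in Lemma \ref{L:RMConstants}) that $ha \cdot v_\chi = \chi(a)\, v_{\chi^h}$, so only the fixed characters contribute to the trace:
\[
\chi_i(ha) = \sum_{\chi \in \Phi_i:\ \chi^h = \chi} \chi(a).
\]
Applying $\tau$ gives $\chi_i^\tau(ha) = \sum_{\chi \in \Phi_i:\ \chi^h = \chi} \chi^\tau(a)$. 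The key observation, which I would record as a one-line check, is that $(\chi^h)^\tau = (\chi^\tau)^h$: both sides send $a$ to $\tau(\chi(h^{-1}ah))$. Consequently $\chi^h = \chi$ if and only if $(\chi^\tau)^h = \chi^\tau$. If $\tau$ permutes $\Phi_i$, then reindexing $\chi' = \chi^\tau$ in the sum identifies it with $\chi_i(ha)$.

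There is no real obstacle here beyond bookkeeping; the entire argument reduces to the commutation of the Galois and $H$-actions on $A^*$ and the linear independence of characters of $A$.
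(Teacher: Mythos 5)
Your proof is correct, but it reaches the conclusion by a different mechanism than the paper. The paper's argument (following Knapp) is global: it realizes each $M_i$ over $\bQ(\zeta_e)$ using the monomial structure ($H$ permutes the basis $\Phi_i$, $A$ acts diagonally), observes that $\sum_i \chi_i$ is the integer-valued character of the permutation representation $\bC[A]$, deduces that $\mathrm{Gal}(\bQ(\zeta_e)/\bQ)$ permutes the set $\{M_i\}_i$ and hence $\{\Phi_i\}_i$, and only then reads off the equivalence for each fixed $i$. You instead prove the equivalence for a fixed $i$ directly: the explicit trace formula $\chi_i(ha)=\sum_{\chi\in\Phi_i,\ \chi^h=\chi}\chi(a)$ on $G=HA$, linear independence of characters of $A$ (for the ``only if'' direction, via restriction to $A$), and the commutation $(\chi^h)^\tau=(\chi^\tau)^h$ (for the ``if'' direction). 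This buys a self-contained statement that never invokes the Galois action on the whole collection $\{M_i\}$, and it also makes visible why $\chi_i$ takes values in $\bQ(\zeta_e)$, something the paper obtains from the matrix realization. Two small remarks: depending on the convention for the $G$-action on $\bC[A]$, the eigenvalue/indexing in $ha\cdot v_\chi=\chi(a)\,v_{\chi^h}$ may come out as $\chi(a^{-1})$ or involve $h^{-1}$, but this does not affect either direction of the argument; and you do not actually need $\Phi_i$ to be a single $H$-orbit (true by Clifford theory, but all you use is that $\Phi_i$ is $H$-stable, which follows from $G$-invariance of $M_i$ together with linear independence of characters).
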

\begin{proof}
First, we will show that each $M_i$ is defined over $\bQ(\zeta_e)$. To see this, note that $H$ and $A$ generate $G$ and that, with respect to the basis $\Phi_i$, elements of $H$ are permutation matrices and elements of $A$ are diagonal with entries in $\bQ(\zeta_e)$. Therefore, for each $\tau \in \mathrm{Gal}(\bQ(\zeta_e)/\bQ)$ and each $M_i$, there is an irreducible representation $M_i^\tau$ given by post-composing the $G$-representation determined by $M_i$ with $\tau$. Let $\chi_i$ be the character of $M_i$. Since $\sum_i \chi_i$ is the character of a permutation representation, it is integer-valued, and so $\sum_i \chi_i = \sum_i \chi_i^\tau$ for all $\tau \in \mathrm{Gal}(\bQ(\zeta_e)/\bQ)$. This implies that $\mathrm{Gal}(\bQ(\zeta_e)/\bQ)$ permutes the elements of $\{ M_i \}_i$ and hence the elements of $\{\Phi_i \}_i$. It follows that $\chi_i^\tau = \chi_i$ if and only if $\tau$ permutes the elements of $\Phi_i$. (See the proof of Knapp \cite[Theorem 2.3]{Knapp-OnBurnside} for a similar argument).
\end{proof}

\begin{lem}\label{L:HodgeCompatbility}
If $\chi$ is the character of $M_i$ and $\overline{\chi} = \chi$, then $\cV_i$ is Hodge compatible.
\end{lem}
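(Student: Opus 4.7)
The plan is to work fiberwise on $\mathrm{Hur}_\sigma$ and reduce Hodge compatibility of $\cV_i$ to the single assertion that complex conjugation on $H^1(Y, \bC)$ preserves the projection $p(\cV_i)$; once this is established, both halves of the Hodge-compatibility condition follow from standard linear Hodge theory.

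First I would recall, using Proposition \ref{P:CohomologyDecomposition} and the identification of $H^1(Y, \bC)$ with the $H$-invariants in $H^1(X, \bC)$, that $p(\cV_i)$ is precisely the $M_i$-isotypic component $(M_i^H)^{m_i}$. Complex conjugation on $H^1(X, \bC)$ arises from the real structure $H^1(X, \bR)$ and commutes with the $G$-action: $\overline{g \cdot v} = g \cdot \overline{v}$ for all $g \in G$. Consequently, if $v$ lies in the $M_j$-isotypic component, then $\overline{v}$ lies in the $\overline{M_j}$-isotypic component. The hypothesis $\overline{\chi} = \chi$ forces $\overline{M_i} \cong M_i$ as $G$-representations, so conjugation preserves the $M_i$-isotypic component of $H^1(X, \bC)$; taking $H$-invariants (which also commutes with conjugation) then yields that $p(\cV_i) \subseteq H^1(Y, \bC)$ is itself conjugation-stable.

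Next I would invoke the elementary fact that any complex subspace $W \subseteq H^1(Y, \bC)$ preserved by complex conjugation decomposes as
\[ W = (W \cap H^{1,0}(Y)) \oplus (W \cap H^{0,1}(Y)), \]
with $\dim_\bC (W \cap H^{1,0}) = \dim_\bC (W \cap H^{0,1})$, since conjugation swaps the two Hodge summands while preserving $W$. Applying this to $W = p(\cV_i)$, and unwinding the definitions so that $p(\cV_i^{1,0}) = p(\cV_i) \cap H^{1,0}(Y)$ and $p(\cV_i^{0,1}) = p(\cV_i) \cap H^{0,1}(Y)$, gives the required splitting $p(\cV_i) = p(\cV_i^{1,0}) \oplus p(\cV_i^{0,1})$. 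Because a holomorphic (resp.\ antiholomorphic) form is determined by its absolute cohomology class, $p$ restricts to an injection on $\cV_i^{1,0}$ and on $\cV_i^{0,1}$, so the dimensions $\dim \cV_i^{1,0}$ and $\dim \cV_i^{0,1}$ agree with those of their images in absolute cohomology, which we just saw are equal.

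The only subtle point is the first paragraph—confirming that complex conjugation passes cleanly through the identification of $p(\cV_i)$ as a $G$-isotypic component of $H^1(X, \bC)^H$—but once this is in hand, the argument is formal Hodge theory and requires no further input from the Hurwitz space geometry.
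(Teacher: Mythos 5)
There is a genuine gap in your second paragraph. The ``elementary fact'' you invoke is false: a complex subspace $W \subseteq H^1(Y,\bC)$ that is stable under complex conjugation need \emph{not} split as $(W \cap H^{1,0}(Y)) \oplus (W \cap H^{0,1}(Y))$. For a counterexample, take $W = \bC\,(v + \overline{v})$ for any nonzero $v \in H^{1,0}(Y)$: this line is conjugation-stable, yet it meets $H^{1,0}(Y)$ and $H^{0,1}(Y)$ only in $0$. Conjugation-stability is exactly the statement that $W$ is defined over $\bR$, and if real-definedness implied the splitting, then ``Hodge compatible'' would not need to be listed as a hypothesis separate from ``defined over $\bR$'' in Proposition \ref{P:HHConstructionCriterion}. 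So your argument establishes the dimension equality $\dim(W\cap H^{1,0}) = \dim(W\cap H^{0,1})$ (conjugation does interchange these two intersections), but it does not establish the direct-sum condition $p(\cV_i) = p(\cV_i^{1,0}) \oplus p(\cV_i^{0,1})$, which is half of the definition.

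The missing ingredient is the $G$-equivariance of the Hodge decomposition on the Galois closure, which is how the paper argues. Since $G$ acts holomorphically on $X$, both $H^{1,0}(X)$ and $H^{0,1}(X)$ are $G$-subrepresentations of $H^1(X,\bC)$, so the $M_i$-isotypic component $M_i^{m_i}$ decomposes as the direct sum of its intersections with $H^{1,0}(X)$ and $H^{0,1}(X)$; this gives the splitting (before any use of $\overline{\chi}=\chi$), and it persists after taking $H$-invariants, which identifies the pieces inside $H^1(Y,\bC)$. The hypothesis $\overline{\chi} = \chi$ enters only to equate dimensions: because $H^{0,1}(X) = \overline{H^{1,0}(X)}$ and conjugation commutes with the (real) $G$-action, the character of $H^{0,1}(X)$ is $\overline{\chi_{H^{1,0}(X)}}$, so $M_i$ occurs with the same multiplicity in $H^{1,0}(X)$ and $H^{0,1}(X)$, and the same holds for the $H$-invariant parts. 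Your first paragraph (conjugation permutes isotypic components and fixes the $M_i$-one when $\overline{\chi}=\chi$) is correct and is essentially the same observation in a different guise, but you then route the splitting through the false linear-algebra claim instead of through the $G$-stability of the Hodge decomposition upstairs.
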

\begin{proof}
Let $f: Y \ra \bP^1$ be a branched cover with monodromy $\sigma$ and let $X$ be its Galois closure. Let $\chi_{H^{1,0}(X)}$ be the character of $H^{1,0}(X)$ as a $G$-representation. The character of $H^{0,1}(X)$ as a $G$-representation is $\overline{\chi_{H^{1,0}(X)}}$. Since $\overline{\chi} = \chi$, the multiplicity with which $M_i$ appears in $H^{1,0}(X)$ and $H^{0,1}(X)$ is the same. Therefore, the subspaces of $M_i^{m_i} \subseteq H^1(X)$ of holomorphic and anti-holomorphic $1$-forms have the same dimensions. The same holds after taking $H$-invariants, which shows that $\cV_i$ is Hodge compatible.
\end{proof}

\begin{proof}[Proof of Proposition \ref{P:RMConstants}:] The first claim is Lemma \ref{L:SpannedByCharacters}, the second is Lemma \ref{L:FieldOfDefinition}, and the third is Lemmas \ref{L:CharacterValues} and \ref{L:HodgeCompatbility}.
\end{proof}

\section{Estimating $d_2$}\label{S:FirstEstimatesOfD2}

We continue to the use the notation of Section \ref{S:HHConstructions}. We will fix a surface $f: Y \ra \bP^1 \in \mathrm{Hur}_\sigma$ and let $\phi: X \ra Y$ be its Galois closure. The following is an argument found in Delecroix-Rueth-Wright \cite[Lemma 5.3]{DRW}.

% Let $\wt{\cM_1}$ be the collection of holomorphic $1$-forms in $(M_1^{m_1}) \subseteq H^1(X)$ where $X$ ranges over all $G$-regular covers\footnote{By Lemma \ref{L:BoundaryConstruction} we assume that if $Y$ belongs to the boundary of $\cB_\sigma$, then $X$ is a disjoint union of smooth surfaces.} whose quotient by $H$ belongs to $\cB_\sigma$. (We have fixed the index in the subscript to be $1$ without loss of generality.) We will let $\wt{\cM_1^H}$ denote the $H$-invariant elements of $\wt{\cM_1}$. If $\eta \in H^{1,0}(X)$ belongs to $\wt{\cM_1}$ and $a \in A$, then we will say that $\eta$ is an $a$-eigenform with eigenvalue $\lambda$, if $a^*\eta = \lambda \eta$. The following is an argument found in Delecroix-Rueth-Wright \cite[Lemma 5.3]{DRW}.

\begin{lem}\label{L:VanishingOnX}
Suppose that $p \in X$ is fixed by an element $g \in G$ of order $N$ that acts, in a local coordinate $z$ at $p$, by $g(z) = \zeta_N z$. If $\eta$ is a holomorphic $1$-form on $X$ so that $g^* \eta = \lambda \eta$, for some $\lambda \in \bC$, then the order of vanishing $d$ of $\eta$ at $p$ satisfies $\zeta_N^{d+1} = \lambda$. Moreover, if $k$ is the smallest nonnegative integer so that $\zeta_N^{k+1} = \lambda$, then, in local coordinates, $\eta = \sum_{j \geq 0} a_{k+Nj} z^{k+Nj} dz$ where $(a_j)_{j \geq 0}$ are constants.
%
% Moreover, there is a local coordinate $z$ at $p$, so that if $k$ is the smallest nonnegative integer so that $\zeta_N^{k+1} = \lambda$, then $\eta = \sum_{j \geq 0} a_{k+Nj} z^{k+Nj} dz$.
\end{lem}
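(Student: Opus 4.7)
The plan is to do a direct local power-series computation. Working in the local coordinate $z$ at $p$ in which $g$ acts as $z \mapsto \zeta_N z$, write the holomorphic form as
\[
\eta \;=\; f(z)\,dz \;=\; \sum_{n\geq 0} a_n\, z^n\, dz,
\]
where the sum starts from $n=d$, the order of vanishing, so that $a_d \neq 0$ and $a_n = 0$ for $n < d$. The pullback formula then gives
\[
g^*\eta \;=\; f(\zeta_N z)\cdot \zeta_N\, dz \;=\; \sum_{n\geq 0} a_n\,\zeta_N^{\,n+1}\, z^n\, dz.
\]

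Next, I compare this coefficient-by-coefficient with $\lambda \eta = \sum_n \lambda a_n z^n\, dz$, using the eigen-equation $g^*\eta = \lambda \eta$. Since $\{z^n\,dz\}_{n\geq 0}$ is linearly independent, this yields $a_n\bigl(\zeta_N^{\,n+1} - \lambda\bigr) = 0$ for every $n \geq 0$. In particular, since $a_d \neq 0$, we must have $\zeta_N^{\,d+1} = \lambda$, which is the first assertion.

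For the second assertion, observe that the condition $\zeta_N^{\,n+1} = \lambda$ on a nonnegative integer $n$ picks out exactly one residue class modulo $N$. By definition, $k$ is the smallest nonnegative representative of this class, so the set of admissible $n$ is $\{k, k+N, k+2N, \ldots\}$. Hence $a_n = 0$ for all $n$ outside this arithmetic progression, and
\[
\eta \;=\; \sum_{j \geq 0} a_{k+Nj}\, z^{\,k+Nj}\, dz,
\]
as claimed. The only thing to be careful about is that $k$ is well-defined and nonnegative, which is immediate since some nonnegative $n$ (namely $n=d$) satisfies the equation. No serious obstacle is anticipated; the argument is a one-line comparison of Fourier (i.e.\ power-series) coefficients under the cyclic $\langle g \rangle$-action on the local model.
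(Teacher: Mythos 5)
Your proposal is correct and follows essentially the same argument as the paper: expand $\eta$ in the local coordinate, pull back by $g$, and compare coefficients of $z^n\,dz$ to conclude that $a_n \ne 0$ forces $\zeta_N^{\,n+1} = \lambda$, which gives both the order-of-vanishing condition and the arithmetic-progression form of the expansion.
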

\begin{proof}
%There is a local coordinate $z$ around $p$ so that the action of $g$ is given by $g(z) = \zeta_N z$. 
In coordinates, $\eta = \sum_{j \geq 0} a_j z^j dz$ for some sequence $(a_j)_{j \geq 0}$ of complex numbers. Since $\eta$ is a $g$-eigenform of eigenvalue $\lambda$,
\[ \sum_{j \geq 0} \lambda a_j z^j dz = g^*\left( \sum_{j \geq 0} a_j z^j dz \right) = \sum_{j \geq 0} a_j \zeta_N^j z^j \zeta_N dz. \]
Comparing coefficients we see that if $a_j \ne 0$, then $\lambda = \zeta_N^{j+1}$. 
\end{proof}

% Given $g \in G$ of order $N$, let $\Lambda_i(g) := (d_{i,k})_{k \geq 0}$ be the strictly increasing sequence of nonnegative integers $d$ so that $\zeta_N^{d+1}$ is an eigenvalue for the $g$-action on $M_i$. 

We will now fix a point $q \in Y$ whose image under $f$ is a branch point $b$ with monodromy $a \in A$ of order $N$. 

\begin{defn}\label{D:OrderOfVanishing}
Let $\Lambda_i(a) := (d_{i,k})_{k \geq 0}$ be the strictly increasing sequence of nonnegative integers $d$ so that $\zeta_N^{d+1}$ is an eigenvalue for the $a$-action on $M_i$. 
%Let $E_k(a)$ be the $a$-eigenspace of eigenvalue $\zeta_N^{d_{i,k}+1}$ in $M_i$. Let $L(a)$ be the number of distinct $a$-eigenvalues. 
%
%In the case that $M_i$ is spanned by a collection of $A$-characters $\Phi_i$, $\dim E_k(a)$ is the number of characters $\chi \in \Phi_i$ that send $a$ to $\zeta_N^{d_{i,k}+1}$ and $L(a)$ is the number of distinct elements in $\{\chi(a)\}_{\chi \in \Phi_i}$. 
\end{defn}

% possible eigenvalues of $a$ for its action on $M_1$. If $A$ is abelian and $M_1$ is spanned by a collection of characters $\Phi_1$, then $\Lambda_a = \{ \chi(a) \}_{\chi \in \Phi_1}$.

\begin{cor}\label{C:MainOrderOfVanishingComputation}
Suppose that $A$ is normal. If $\omega$ is a holomorphic $1$-form on $Y$ belonging to $(M_i^H)^{m_i}$, then its order of vanishing at $q$ belongs to $\Lambda_i(a)$.
\end{cor}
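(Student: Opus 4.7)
The plan is to pull $\omega$ back to the Galois closure $X$ and apply Lemma~\ref{L:VanishingOnX} at a preimage of $q$.

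First I will pick a preimage $p \in X$ of $q$ under $\phi$. Since the preimages in $X$ of the branch point $b$ are in bijection with $G/\langle a \rangle$, the stabilizer of $p$ in $G$ is a conjugate $\langle a' \rangle$ of $\langle a \rangle$. Normality of $A$ forces $a' \in A$; combined with $H \cap A = \{e\}$, this gives $H \cap \langle a' \rangle = \{e\}$. Hence $\phi\colon X \to X/H = Y$ is unramified at $p$, so the order of vanishing of $\phi^*\omega$ at $p$ equals that of $\omega$ at $q$. Moreover, since $a'$ and $a$ are $G$-conjugate and $M_i$ is a $G$-representation, they act on $M_i$ with the same eigenvalues, so $\Lambda_i(a') = \Lambda_i(a)$.

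Next I will use that, under the identification $H^1(Y, \bC) = H^1(X, \bC)^H$ from Proposition~\ref{P:CohomologyDecomposition}, the subspace $(M_i^H)^{m_i}$ sits inside the $M_i$-isotypic component $M_i^{m_i} \subseteq H^1(X, \bC)$. Since $\omega$ is holomorphic, $\phi^*\omega$ lies in the $M_i$-isotypic component of $H^{1,0}(X)$. Choose a local coordinate $z$ at $p$ in which $a'$ acts by $z \mapsto \zeta_N z$, and decompose $\phi^*\omega = \sum_\lambda \eta_\lambda$ into $a'$-eigenforms on $X$, where $\lambda$ ranges over the eigenvalues of $a'$ on $M_i$. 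By Lemma~\ref{L:VanishingOnX}, each $\eta_\lambda$ has a local expansion at $p$ in which every exponent $k$ that appears satisfies $\zeta_N^{k+1} = \lambda$, and hence belongs to $\Lambda_i(a')$. Different eigenvalues $\lambda$ force exponents $k$ in distinct residue classes modulo $N$, so the supports of the various $\eta_\lambda$ in the local expansion are disjoint and no cancellation can occur. Consequently, every $k$ appearing with nonzero coefficient in the local expansion of $\phi^*\omega$ at $p$ lies in $\Lambda_i(a') = \Lambda_i(a)$.

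Taking the smallest such $k$ gives the order of vanishing of $\phi^*\omega$ at $p$, hence of $\omega$ at $q$, as an element of $\Lambda_i(a)$. The main subtle point, where normality of $A$ is essential, is ensuring that the stabilizer of $p$ is generated by an element of $A$ itself, so that the $a$-eigenvalue data entering the definition of $\Lambda_i(a)$ is directly relevant; beyond this, the argument is straightforward bookkeeping with the local eigenform expansions provided by Lemma~\ref{L:VanishingOnX}.
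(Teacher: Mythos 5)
Your proof is correct and follows essentially the same route as the paper: pass to the Galois closure, use normality of $A$ together with $H \cap A = \{\mathrm{id}\}$ to see that $\phi$ is unramified over $b$, and then apply Lemma \ref{L:VanishingOnX} to an eigenform decomposition of $\phi^*\omega$ in the $M_i$-isotypic part of $H^{1,0}(X)$. Your bookkeeping with the conjugate generator $a'$ at a preimage of the given point $q$ (and the observation $\Lambda_i(a') = \Lambda_i(a)$) is just a slightly more explicit version of the paper's choice of a point stabilized by $\langle a \rangle$.
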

\begin{proof}
The points in $f^{-1}(b)$ can be identified with those in $H \backslash G / \langle a \rangle$ and the points in $(f \circ \phi)^{-1}(b)$ with those in $G / \langle a \rangle$. Notice that if $g \in G$, then $|H g /\langle a \rangle| = |H|$. This follows since if $h_1, h_2 \in H$ and $h_1ga_1 = h_2 g a_2$ for $a_1, a_2 \in A$, then $h_2^{-1}h_1 = g(a_2 a_1^{-1})g^{-1}\in A \cap H = \{\mathrm{id}\}$, so $h_1 = h_2$. Therefore, elements of $(f \circ \phi)^{-1}(b)$ do not contain branch points of $\phi$.  In particular, the order of vanishing of $\omega$ at $q$ is the same as the order of vanishing of $\phi^*\omega$ at any point in $\phi^{-1}(q)$.

Let $p$ be a point in $(f \circ \phi)^{-1}(b)$ that is stabilized by $\langle a \rangle$ and so there are local coordinates $z$ around $p$ so that $a(z) = \zeta_N z$.  Note that $\phi^* \omega$ belongs to $(M_i)^{m_i} \subseteq H^1(X)$, which has a basis of $a$-eigenforms. By Lemma \ref{L:VanishingOnX}, the Taylor series expansion $\sum_{j \geq 0} a_j z^j dz$ of an $a$-eigenform in $z$ only has nonzero coefficients when $j \in \Lambda_i(a)$. Since $\phi^*\omega$ is a sum of these eigenforms, its order of vanishing at $p$ belongs to $\Lambda_i(a)$. %
%For a point $g \cdot p$, there is a similar local coordinate with respect to the action of $gag^{-1}$. 
%% We may choose $p$ so that its stabilizer in $G$ is generated by $a$. Since $\phi^* \omega$ belongs to $(M_1)^{m_1} \subseteq H^1(X)$, which admits a basis of $a$-eigenforms, the claim follows from Lemma \ref{L:VanishingOnX}.
\end{proof}

Recall that the moduli space of admissible covers $\cA_\sigma$ admits a map to $\overline{\cM_{g_Y}}$ which forgets the cover but remembers its (unmarked) domain. Let $\Omega \cA_\sigma$ be the pullback of the bundle of stable differentials to $\cA_\sigma$. The following argument can be found in Delecroix-Rueth-Wright \cite[Proof of Theorem 1.1]{DRW}.

\begin{cor}\label{C:OrderOfVanishing}
The locus of $\cM_i$ where the order of vanishing of the $1$-form at $q$ is at least $d_{i,j}$ has codimension at most $j$ if it is nonempty.\footnote{One may worry that if two special branch points collide the monodromy around the resulting branch point may no longer belong to $A$ and hence our methods of controlling the order of vanishing at its preimages are useless. This is precisely why, in the definition of $\cB_\sigma$, we have insisted that special branch points do not collide.}
\end{cor}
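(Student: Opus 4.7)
The plan is to exhibit the locus as the common vanishing set of $j$ analytic functions on $\cM_i$, from which the codimension bound follows immediately since each analytic equation drops dimension by at most one.

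I would work locally in a neighborhood of a point $(f_0 \colon Y_0 \to \bP^1,\, \omega_0) \in \cM_i$. Since $q$ lies over a special branch point and $\cB_\sigma$ is defined precisely so that special branch points do not collide with any other branch points, there is a local analytic section $(f \colon Y \to \bP^1) \mapsto q(f) \in Y$ on a neighborhood of $f_0$ in $\cB_\sigma$. By the calculation in the proof of Corollary \ref{C:MainOrderOfVanishingComputation}, the Galois cover $\phi \colon X \to Y$ is unramified over $q$, so a chosen preimage $p(f) \in \phi^{-1}(q(f))$ stabilized by $\langle a \rangle$ admits a local coordinate $z$ with $a^* z = \zeta_N z$, and the same $z$ serves as a local coordinate on $Y$ at $q(f)$.

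In this coordinate, expand $\omega = \sum_{k \geq 0} a_k(f, \omega)\, z^k\, dz$, so that each Taylor coefficient $a_k$ is an analytic function on the neighborhood in $\cM_i$. Since $\phi^*\omega$ lies in the $G$-invariant (hence $a$-invariant) isotypic component $(M_i)^{m_i} \subseteq H^{1,0}(X)$, on which $a$ acts diagonalizably because $A$ is abelian, $\phi^*\omega$ is a sum of $a$-eigenforms, and Lemma \ref{L:VanishingOnX} forces $a_k \equiv 0$ for every $k \notin \Lambda_i(a)$. Thus the condition ``the order of vanishing of $\omega$ at $q$ is at least $d_{i,j}$'' becomes the system
\[ a_{d_{i,0}}(f, \omega) = a_{d_{i,1}}(f, \omega) = \cdots = a_{d_{i,j-1}}(f, \omega) = 0, \]
consisting of $j$ analytic equations on $\cM_i$. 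Any nonempty component of the common zero set has codimension at most $j$, as required.

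The only delicate ingredient I expect is that the local section $q(f)$ and the coordinate $z$ really extend analytically across the boundary of $\cB_\sigma$ and not merely over $\mathrm{Hur}_\sigma$; but this is precisely what the definition of $\cB_\sigma$ and the footnote to the statement are designed to guarantee, so I do not anticipate a serious obstacle beyond careful bookkeeping.
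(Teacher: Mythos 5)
Your proof is correct and is essentially the paper's argument: both rest on the fact (Lemma \ref{L:VanishingOnX}, Corollary \ref{C:MainOrderOfVanishingComputation}) that only the Taylor coefficients at indices in $\Lambda_i(a)$ can be nonzero, so forcing vanishing to order $d_{i,j}$ costs only $j$ analytic conditions, each dropping dimension by at most one. The only difference is packaging: the paper imposes one leading-coefficient equation at a time and re-invokes Corollary \ref{C:MainOrderOfVanishingComputation} at each step (which needs only the pointwise, coordinate-free statement), whereas you write all $j$ coefficient equations simultaneously in an $a$-linearizing coordinate varying analytically over $\cB_\sigma$ --- exactly the bookkeeping point you already flag.
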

\begin{proof}
Recall that $\cM_i$ is a subvariety of the pullback $\Omega \cA_\sigma$. For each integer $k >0$, the sublocus $\cL_k$ of $\Omega \cA_\sigma$ where forms vanish at $q$ to order $k$ is either empty or a codimension at most one subset defined by a single equation in $\cL_{k-1}$. Therefore, the locus of $\cM_i$ where the order of vanishing at $q$ is at least $d_{i,0}+1$ instead of $d_{i,0}$ is codimension at most one if it is nonempty. If it is nonempty this locus consists of forms that vanish to order at least $d_{i,1}$ at $q$ by Corollary \ref{C:MainOrderOfVanishingComputation}. Iterating the argument gives the result.
\end{proof}

%WARNING: This really needs us to work with $\cM_i$ and not $\pi(\cM_i)$ since pullbacks of codimension one need not be codimension one. This also needs the locus we're interesecting with to be defined by one equation!

\section{Example: Construction of the Bouw-M\"oller Curves}

We will now use Proposition \ref{P:HHConstructionCriterion} to construct the Bouw-M\"oller curves. Let $M \leq N$ be two positive integers. Set 
\[ \gamma = \mathrm{gcd}(N,M), \quad n = \frac{N}{\gamma}, \quad \text{and} \quad m = \frac{M}{\gamma}. \]
We will distinguish between two cases. Case $1$ is where $N$ and $M$ are both even and $n$ and $m$ are both odd. Case $2$ is anything else. Set
\[ L = \mathrm{lcm}(N,M) = \gamma nm, \quad a = \frac{L-n-m}{2}, \quad b = \frac{L-n+m}{2} \quad \text{(in Case $1$).}\]
and
\[ L = 2\mathrm{lcm}(N,M) = 2\gamma nm, \quad a = \frac{L}{2} - (n + m), \quad \text{and} \quad b = \frac{L}{2} - (n-m) \quad \text{(in Case $2$).}\]
Let $A = \bZ/L \times \bZ/L$ and let $B = \begin{pmatrix} a & b \\ b & a \end{pmatrix}$ be an endomorphism of $A$ with kernel $K$. The group we will use for the Hurwitz-Hecke construction is
\[ G = (A/K) \rtimes H \]
where $H = \bZ/2 \times \bZ/2$. Let $\{0,h_1,h_2,h_3\}$ be the elements of $H$. Their action on $A = \bZ/L \times \bZ/L$ is the following,
\[ h_1(x,y) = (-x,-y), \quad h_2(x,y) = (y,x), \quad \text{and} \quad h_3(x,y) = (-y,-x)\]
A generating set for $A \rtimes H$, which we write as $AH$, is
\[ \left((1,0), (-1,0)h_1, h_2, h_3 \right). \]
Let $\sigma$ be the corresponding generating set of $G$. Let $Y$ be a genus $g$ branched cover of $\bP^1$ with monodromy $\sigma$ and where the elements of $\sigma$ are understood to define permutations of $A/K$. The preimage of the first branch point will be called the \emph{special fiber}.

\begin{lem}\label{L:BouwMollerGenus}
If $F$ is the number of points in the special fiber, then $2g-2 = F(a-1)$.
\end{lem}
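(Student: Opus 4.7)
My plan is a direct Riemann--Hurwitz computation. Set $d := |A/K|$, the degree of $f : Y \to \bP^1$. With four branch points and monodromy $\sigma$, Riemann--Hurwitz gives
\[
2g - 2 = -2d + \sum_{i=1}^{4} \bigl(d - o(g_i)\bigr),
\]
where $o(g_i)$ is the number of cycles of $g_i$ on $A/K$.

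Since $g_1 = (1,0)$ acts by translation on $A/K$, all its orbits have the same length and $o(g_1) = F$ by the definition of $F$. A direct computation in $G = (A/K) \rtimes H$ shows that $g_2 = (-1,0)h_1$, $g_3 = h_2$ and $g_4 = h_3$ all square to the identity (for $g_2$, this is because $h_1 \cdot (-1,0) = (1,0)$ cancels the initial $(-1,0)$). Writing $f_i$ for the number of fixed points of $g_i$ on $A/K$, we have $o(g_i) = (d + f_i)/2$ for $i = 2, 3, 4$. Substituting and simplifying, the lemma reduces to the identity
\[
d - f_2 - f_3 - f_4 = 2Fa.
\]

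I would then compute each fixed-point count by lifting to $A$ and dividing by $|K|$. A class $(x,y) + K$ is fixed by $g_4$ iff $(x+y)(1,1) \in K$, equivalently $(a+b)(x+y) \equiv 0 \pmod L$; by $g_3$ iff $(a-b)(x-y) \equiv 0 \pmod L$; and by $g_2$ iff $(2x+1, 2y) \in K$, which is the inhomogeneous system $2 B(x, y)^T \equiv -(a, b)^T \pmod L$. In all three cases the solution count in $A$ is controlled by gcds in $\bZ/L$. Similarly $d = L^2/|K|$ and $F = d/N_1$ where $N_1$ is the order of $(1,0)$ in $A/K$, both computable from the Smith normal form of $B$.

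The main obstacle is the case split and bookkeeping. In Case 1 the identities $a + b = L - n$ and $a - b = -m$ simplify the gcd computations; Case 2 should be parallel with $L/2$ playing the role of $L$. The $g_2$ count is the most delicate, since solvability of the inhomogeneous system depends on parity constraints; but these are precisely the parity constraints distinguishing the two cases, so in each case the computation should collapse cleanly to the target identity $d - f_2 - f_3 - f_4 = 2Fa$, yielding $2g - 2 = F(a-1)$.
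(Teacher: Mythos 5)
Your proposal is correct and follows essentially the same route as the paper: Riemann--Hurwitz with $g_1$ acting as a translation and $g_2,g_3,g_4$ as involutions whose fixed points on $A/K$ are counted by lifting to $A$ and dividing by $|K|$ (the paper phrases this via ramification indices rather than cycle counts, which is the same bookkeeping). The deferred computations do collapse as you predict: $\gcd(a,b)=1$ together with $L$ even forces $f_2=0$ in both cases, $g_1$ has order $L$ so $d=FL$, and $f_3+f_4$ equals $(m+n)F$ in Case 1 and $2(m+n)F$ in Case 2, giving exactly $d-f_2-f_3-f_4=2Fa$.
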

\begin{proof}
Let $\sigma = (g_1, g_2, g_3, g_4)$. We will compute the genus $g$ using the Riemann-Hurwitz formula, which associates a ramification index to each $g_i$.

Since our assumptions imply that $\mathrm{gcd}(a,b) = 1$, it follows that $g_1$ has order $L$. Therefore, $F = \frac{L^2}{|K|L} = \frac{L}{|K|}$ and the ramification index associated to $g_1$ is $F(L-1)$.

To compute the associated ramification indices for the three remaining $g_i$, we note that each is an involution. So letting $\mathrm{Fix}_i$ denote the points in $A/K$ fixed by $g_i$, the associated ramification index is $\frac{|A/K| - |\mathrm{Fix}_i|}{2}$. 

A fixed point of $g_2$ is a solution to the equation $-x+g_1 = x$ for $x \in A/K$. This is equivalent to finding a solution to the equation $(1,0) = 2\wt{x} + K$ for $\wt{x} \in A$. However, applying $B$ to both sides of this equation we would have that $(a,b)$ consists of a pair of even integers, contradicting the fact that $\mathrm{gcd}(a,b) =1$. Therefore, $g_2$ has no fixed points and so its ramification index is $\frac{|A/K|}{2} = F\frac{L}{2}$.

For the two remaining elements, if $aK$ is a fixed point of $g_i$, then $h_{i-1}(a) = a+K$. Letting $S_i$ be the number of solutions $a \in A$ to $h_{i-1}(a) - a \in K$, the ramification index of $g_i$ is $\frac{|A|-S_i}{2|K|}$. These solutions are $(x,y)$ so that $(x \pm y, y \pm x) \in K$. An element of the form $(z,z)$ (resp. $(z,-z)$) belongs to the kernel if and only if $(a+b)z$ (resp. $(a-b)z$) is zero. 

In Case $1$, $a+b = -n$ and $a-b = -m$ so $S_3 = mL$ and $S_4 = nL$ and the associated ramification indices are $F\left( \frac{L-m}{2} \right)$ and $F\left( \frac{L-n}{2} \right)$. By Riemann-Hurwitz,
\[ 2g - 2 = F\left( (L-1) + 3\frac{L}{2} - \frac{m+n}{2} - 2L \right) = F(a-1). \]

In Case $2$, $a+b = -2n$ and $a-b = -2m$ so $S_3 = 2mL$ and $S_4 = 2nL$ and the associated ramification indices are $F\left( \frac{L-2m}{2} \right)$ and $F\left( \frac{L-2n}{2} \right)$. By Riemann-Hurwitz,
\[ 2g - 2 = F\left( (L-1) + 3\frac{L}{2} - m - n - 2L \right) = F(a-1). \]
\end{proof}

\begin{rem}
To explicitly compute the genus it remains to compute $|\ker(B)|$. Since $\mathrm{gcd}(a,b) = 1$ there are integers $c$ and $d$ so that $ac+bd = 1$. The Smith normal form of $B$ is
\[ B = \begin{pmatrix}  a & -d \\ b & c \end{pmatrix} \begin{pmatrix} 1 & 0 \\ 0 & a^2-b^2 \end{pmatrix} \begin{pmatrix} 1 & ad + bc \\ 0 & 1 \end{pmatrix} \]
It follows that $|\ker(B)| = \mathrm{gcd}(a^2-b^2, L)$, which is $nm$ in Case $1$ and $2nm\mathrm{gcd}(2,\gamma)$ in Case $2$.
\end{rem}

%Case 2
% Notice that $a^2 = \left( \frac{L}{2} \right)^2 + (n+m)^2$ and $b^2 = \left( \frac{L}{2} \right)^2 + (n-m)^2$, so 
% \[ a^2 - b^2 = (n+m)^2 - (n-m)^2 = 4nm \]
% Therefore, $|\ker(B)| = 2nm \mathrm{gcd}(2,\gamma)$. 
%
%Case 1
%As before, taken mod $L$, $a^2 - b^2 = nm$. Therefore, $|\ker(B)| = nm$.

%Let $\zeta_N := \mathrm{exp}\left(\frac{2\pi i}{N} \right)$.

Consider the following characters of $A$ given by sending $(x,y)$ to $\zeta_L^{ax+by}$ and to $\zeta_L^{bx+ay}$ respectively. The intersection of the kernels of these characters is precisely $K$, so these characters descend to characters on $A/K$, which we call $\chi_1$ and $\chi_2$ respectively. The $H$-orbit of $\chi_1$ is $\left\{ \chi_1, \chi_2, \overline{\chi_1}, \overline{\chi_2} \right\}$ and the span of these characters determines a $G$-representation $V \subseteq \bC[A]$. As in Section \ref{S:HHConstructions}, this determines a flat subbundle $\cV$ of $H^1_{\mathrm{Hur}_\sigma, rel}$ and we will let $\cM$ be the holomorphic $1$-forms in $\cV$. 

\begin{prop}\label{P:BouwMoller}
$\cM$ determines a Teichm\"uller curve.
\end{prop}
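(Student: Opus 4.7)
My plan is to apply Proposition \ref{P:HHConstructionCriterion} to the subbundle $\cV$ with rank $r = 1$ and $\rho = 0$, so that the output is (by definition) a Teichm\"uller curve. Two hypotheses are essentially free: since $\Phi := \{\chi_1, \chi_2, \overline{\chi_1}, \overline{\chi_2}\}$ is closed under complex conjugation, Lemma \ref{L:HodgeCompatbility} gives Hodge compatibility, and each sum $\frac{1}{4}\sum_{\chi \in \Phi} \chi(v)$ is real, so Lemma \ref{L:FieldOfDefinition} shows $\cV$ is defined over $\bR$.

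Next I would compute $m$ and $\rho$. The element $g_1 = (1,0)$ acts on $\Phi$ diagonally with scalars $\zeta_L^{\pm a}, \zeta_L^{\pm b}$, none of which equals $1$ for the given values of $a, b, L$; hence $\dim \mathrm{Fix}_{g_1}(V) = 0$, giving $\rho = 0$ by Lemma \ref{L:RelMultiplicity}. Each involution $g_2, g_3, g_4$ interchanges pairs in $\Phi$ (with scalars in the case of $g_2$), decomposing $V$ into two invariant $2$-dimensional blocks whose matrices have trace $0$ and determinant $-1$, so $\dim \mathrm{Fix}_{g_j}(V) = 2$ for $j = 2, 3, 4$. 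Equation \ref{E:Multiplicity} then gives
\[ m = (n-2)|\Phi| - \sum_{j=1}^{4} \dim \mathrm{Fix}_{g_j}(V) = 2\cdot 4 - 6 = 2, \]
so $r = m/2 = 1$.

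I would then identify the stratum. Corollary \ref{C:MainOrderOfVanishingComputation} applied to the first branch point (with monodromy $(1,0) \in A$ of order $L$) shows that the order of vanishing at any point of the special fiber lies in $\Lambda(1,0)$, whose minimum is $a - 1$ (since $a + b < L$ and $a < b$). Combining with the genus formula $2g - 2 = F(a-1)$ of Lemma \ref{L:BouwMollerGenus} and the fact that the special fiber has $F$ points, every form in $\cV^{1,0}$ with a zero of order $\geq a-1$ at each preimage of $b_1$ and no other zeros accounts for all of $2g-2$; any extra vanishing would exceed this total. Thus forms in a component $\cM'$ of $\cM \cap \pi^{-1}(\cH)$ with $\cH := \cH\left( (a-1)^F \right)$ vanish only at the special fiber, as required.

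The main obstacle will be verifying $d_1 = d_2 = 0$. By Hodge compatibility, $\dim \cM = r + (n-3) = 2$. The locus where any form vanishes strictly more than $a-1$ at some preimage of $b_1$ is a proper closed subset by Corollary \ref{C:OrderOfVanishing}, giving $d_2 = 0$. For $d_1 = 0$ I would argue generic finiteness of $\pi$: a generic $(Y, \omega)$ in the image admits only finitely many branched covers $f \colon Y \to \bP^1$ of the prescribed degree and monodromy type, so the fibers of $\pi$ are discrete. With these inputs, the inequality in Proposition \ref{P:HHConstructionCriterion} reduces to $2 \geq 2$, and the proposition yields the desired rank $1$, rel $0$ invariant subvariety, which is a Teichm\"uller curve.
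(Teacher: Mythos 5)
Your overall route is the same as the paper's (apply Proposition \ref{P:HHConstructionCriterion} with $r=1$, $\rho=0$; use Corollary \ref{C:MainOrderOfVanishingComputation} plus Lemma \ref{L:BouwMollerGenus} to see that every nonzero form in $\cM$ has zeros of order exactly $a-1$ precisely on the special fiber, so the stratum is $\cH((a-1)^F)$ and $d_2=0$), and your computation of $m=2$, $\rho=0$ and of Hodge compatibility and real definition is fine. But there is a genuine gap at the step $d_1=0$. You assert that ``a generic $(Y,\omega)$ in the image admits only finitely many branched covers $f\colon Y \ra \bP^1$ of the prescribed degree and monodromy type,'' and this is exactly the statement that requires proof rather than a general fact: a fixed curve $Y$ can carry positive-dimensional families of degree-$d$ maps to $\bP^1$, and one-parameter Hurwitz families over a $1$-dimensional base can in principle be isotrivial, in which case $\pi$ would have a positive-dimensional fiber and the criterion would fail. (Indeed, controlling $d_1$ is the one place in this construction where real work is needed in general; the paper devotes Lemma \ref{L:FiniteFibers} and Ellenberg's intersection-number criterion to it elsewhere.) The paper's proof closes this gap with a short non-isotriviality argument: since $\dim \mathrm{Hur}_\sigma = n-3 = 1$, an infinite fiber of $\pi$ would force the map $\mathrm{Hur}_\sigma \ra \cM_g$ to be constant on a component, which is impossible because colliding the first two branch points produces covers whose domains are singular stable curves, so the image in $\overline{\cM_g}$ contains both smooth and singular curves. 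You need some argument of this kind (or an appeal to Proposition \ref{P:D1Estimate}/Lemma \ref{L:FiniteFibers}) to get $d_1=0$.

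A smaller point: your appeal to Corollary \ref{C:OrderOfVanishing} for $d_2=0$ is misapplied — that corollary bounds the codimension of higher-vanishing loci from above (it says they are not too small), not that they are proper closed subsets. In fact no such citation is needed: your own degree count ($F(a-1)=2g-2$ with order at least $a-1$ forced at each of the $F$ special-fiber points) already shows that every nonzero form in $\cM_1$ lies in $\cH((a-1)^F)$, so $\cM = \cM_1$ and $d_2=0$, which is how the paper argues.
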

\begin{proof}
Let $i$ be the index so that, in the notation of Section \ref{S:HHConstructions}, $\cV = \cV_i$. By Proposition \ref{P:RMConstants}, $n_i = 1$ and $\cV_i$ is Hodge compatible and defined over $\bR$. By Corollary \ref{C:MainOrderOfVanishingComputation}, the orders of vanishing for elements of $\cM$ on the special fiber are congruent to $\{a-1,b-1,L-a-1,L-b-1\}$ mod $L$. Therefore, the order of vanishing of a form in $\cM$ at any point in the special fiber is at least $a-1$. It follows, by Lemma \ref{L:BouwMollerGenus}, that any form in $\cM$ only vanishes at points in the special fiber. Consequently, if $\pi: \cM \ra \Omega \cM_g$ is as in Section \ref{S:HHConstructions}, then $\pi(\cM) \subseteq \cH((a-1)^F)$. Moreover, $\pi$ has finite fibers since otherwise the map from the Hurwitz space $\mathrm{Hur}_{\sigma}$ to $\cM_g$ would have its image be a single point, contradicting the fact that the image contains both smooth curves and, as can be seen by colliding the first two branch points, singular ones. In the notation of Proposition \ref{P:HHConstructionCriterion}, $n=4$, $d_1 = d_2 = 0$; by Equation \ref{E:Multiplicity}, $r=1$; and, by Lemma \ref{L:RelMultiplicity}, $\rho_i = 0$. Therefore, $(n+r-3)-d_1-d_2 = n_i(2r+\rho_i)$ and so we are done by Proposition \ref{P:HHConstructionCriterion}.
% This yields that $d_1 = d_2 = 0$. By Lemma \ref{L:RelMultiplicity}, $\rho = 0$ and so $\cM$ determines a Teichm\"uller curve by Lemma \ref{L:HHConstructionCriterion}.
\end{proof}

Like $\cM$, the $T(N,M)$ Bouw-M\"oller curve belongs to $\Omega \cM_g$. Being a Teichm\"uller curve, it is completely characterized by its image in $\cM_g$. Let $A'$ be the abelian subgroup of $\bZ/(2NM) \times \bZ/(2NM)$ generated by 
\[ \tau := \big( (NM-N-M, NM+N-M), (NM+N-M, NM-N-M), \hdots \]
\[ \hdots, (NM+N+M, NM-N+M), (NM-N+M, NM+N+M) \big). \]
Let $G' := A' \rtimes H \leq \left( \bZ/(2NM) \times \bZ/(2NM) \right) \rtimes H$ where $H$ acts by the same formulas used to define its action on $A$. By Wright \cite[Theorem 1.2]{Wright-BouwMoller}, the image of $T(N,M)$ in $\cM_g$ is a Hurwitz space of covers $Y$ of $\bP^1$ whose Galois closure $X$ is a $G'$-regular cover of $\bP^1$ so that $Y = X/H$ and so $X \ra X/A'$ is an $A'$-regular cover of $\bP^1$ branched over four points with monodromy $\tau$. We will use this characterization to show the following. 

\begin{prop}\label{P:BouwMoller2}
$\cM$ is the $T(N,M)$ Bouw-M\"oller curve. 
\end{prop}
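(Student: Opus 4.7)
The plan is to match our Hurwitz-space description of $\pi(\cM)$ with Wright's characterization of $T(N,M)$ recalled just above. Since a Teichm\"uller curve is determined by its image in $\cM_g$, it suffices to show that the two descriptions yield the same image. In our construction, $Y$ is obtained from $\sigma$ viewed as permutations of $A/K$; since $H$ is the stabilizer of the identity under this action, the Galois closure of $Y \to \bP^1$ is a $G$-regular cover $X \to \bP^1$ with $Y = X/H$. This matches the format $Y = X/H$ in Wright's description, so what remains is to identify $X$ with Wright's cover under an isomorphism $A/K \cong A'$ compatible with the $H$-structure.

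The main computational step is to identify the cover $X \to X/(A/K)$. First I would verify that
\[ g_2^2 = \left((-1,0) h_1\right)^2 = (-1,0)(1,0) h_1^2 = e, \]
and similarly $g_3^2 = h_2^2 = e$ and $g_4^2 = h_3^2 = e$, so each of $g_2, g_3, g_4$ is an involution with $\langle g_i \rangle \cap (A/K) = \{e\}$. This forces $X \to X/(A/K)$ to be unramified over the preimages of $b_2, b_3, b_4$. Since $g_1 \in A/K$, the point $b_1$ has $|H|=4$ preimages in $X/(A/K)$, one per element of $H$, and the local monodromy at the preimage labeled by $h \in H$ is $h g_1 h^{-1} \in A/K$. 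Computing the four conjugates gives the monodromy 4-tuple
\[ \big((1,0)+K,\; (-1,0)+K,\; (0,1)+K,\; (0,-1)+K\big). \]

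Next I would construct the isomorphism $A/K \to A'$ using that $B$ has kernel $K$: the map $v + K \mapsto 2\gamma B(v)$, possibly composed with a sign twist arising from the case analysis, is a well-defined injection into $(\bZ/(2NM))^2$ with image $A'$, and extends to an isomorphism $G \cong G'$ by the identity on $H$. Under this identification, the 4-tuple above corresponds, up to signs and a reordering of entries, to Wright's $\tau = (\tau_1, \tau_2, -\tau_1, -\tau_2)$. Since $A/K$ is abelian, braid moves on monodromy tuples reduce to arbitrary permutations of entries (all conjugations being trivial), so the resulting tuple is Hurwitz-equivalent to $\tau$. This identifies our Hurwitz space of $Y$'s with Wright's, and hence $\pi(\cM) = T(N,M)$.

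The hardest part will be constructing the explicit isomorphism $A/K \cong A'$ uniformly in Case $1$ and Case $2$ and verifying the correct matching of monodromy tuples under it; in particular, handling the sign discrepancies between the formulas for $a, b$ and the components of $\tau$, and checking compatibility with the $H$-action. This is a finite but delicate arithmetic check using the explicit formulas for $a$, $b$, and the entries of $\tau$.
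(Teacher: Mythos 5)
Your proposal is correct and follows essentially the same route as the paper: reduce to Wright's characterization of $T(N,M)$, observe that $X \to X/(A/K)$ is branched only over the four preimages of the first branch point with monodromy the $H$-conjugates of $(1,0)$, and then use $B$ (which kills $K$) composed with the scaling by $2\gamma$ (resp. $\gamma$ in Case 2) into $\bZ/(2NM)\times\bZ/(2NM)$ to identify $A/K$ with $A'$ and carry the monodromy to $\tau$ up to reordering, compatibly with the $H$-action. The remaining arithmetic matching you flag as delicate is handled in the paper at the same level of explicitness, so no new idea is missing.
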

\begin{proof}
% Let $p: \Omega \cM_g \ra \cM_g$ be the map that forgets the holomorphic $1$-form. It follows from Wright \cite[Theorem 1.2]{Wright-BouwMoller} that if $Y$ is a Riemann surface $\bP^1$ $p\left( T(N,M) \right)$, then it is a branched cover of $\bP^1$ and its Galois closure $X$ admits an action of $H = \bZ/2 \times \bZ/2$ so that $Y = X/H$ and so $X$ is an abelian cover of $\bP^1$ branched over $4$ points and with monodromy given by the following elements of $\bZ/(2NM)$, 
% \[ \tau = \big( (NM-N-M, NM+N-M), (NM+N-M, NM-N-M), \hdots \]
% \[ \hdots, (NM+N+M, NM-N+M), (NM-N+M), (NM+N+M) \big). \]
% This property in fact characterizes $T(N,M)$ and so it suffices to show that it is possessed by $\cM$. 
%
Let $Y$ be an element of $\cM_g$ in the image of the map from $\cM$ to $\cM_g$. It suffices to show that the Galois closure $X$ of $Y$ is a $G'$-regular cover of $\bP^1$, that $Y = X/H$, and that $X$ is an $A'$-regular cover of $\bP^1$ with monodromy given by $\tau$. We have that $X$ is a $G$-regular cover branched over four points with monodromy given by $\sigma$. 

Note that $X/(A/K)$ is a sphere and the map $X \ra X/(A/K)$ is branched over the four preimages of the first branch point with monodromy given by the image of $\left( (1,0), (-1,0), (0,1), (0,-1)\right)$ in $A/K$. We have seen that the map from $A$ to itself given by $B$ has image that is isomorphic to $A/K$ and that sends the four generators of of the previous sentence to $\tau' = \left( (a,b), (-a,-b), (b,a), (-b,-a) \right)$ in $\bZ/L \times \bZ/L$.

% If $Y$ is a surface in $p(\cM)$, then $Y$ is a branched cover of $\bP^1$ with monodromy given by the projection of $((1,0), (-1,0)h_1, h_2, h_3)$ to $(A/K) \rtimes H$. Let $X$ be the Galois cover of $Y$. Notice that $X/A$ is a sphere and the map $X \ra X/(A/K)$ is branched over the four preimages of the first branch point with monodromy given by the image of $\left( (1,0), (-1,0), (0,1), (0,-1)\right)$ in $A/K$. We have seen that the map from $A$ to itself given by $B$ has image that is isomorphic to $A/K$ and that sends the four generators of the previous sentence to $\tau' = \left( (a,b), (-a,-b), (b,a), (-b,-a) \right)$ in $\bZ/L \times \bZ/L$. 

In Case $1$, $A = \bZ/L \times \bZ/L$ where $L = \frac{NM}{\gamma}$ and we have $a = \frac{NM-N-M}{2\gamma}$ and $b = \frac{NM-N+M}{2\gamma}$. Consider the map $A \ra \bZ/(2NM) \times \bZ/(2NM)$ given by sending elements $(x,y)$ to $(2\gamma x, 2\gamma y)$. This sends the monodromy $\tau'$ to $\tau$, up to reordering. In Case 2, there is an analogous construction, with a map from $A$ to $\bZ/(2NM) \times \bZ/(2NM)$ that sends $(x,y)$ to $(\gamma x, \gamma y)$. This shows that $G$ is isomorphic to $G'$ and, by composing $B$ with this isomorphism, exhibits an isomorphism from $A$ to $A'$ that sends $\sigma$ to $\tau$. Therefore, as desired, $X$ is a $G'$-regular cover of $\bP^1$, $X \ra X/A'$ is an $A'$-regular cover of $\bP^1$ with monodromy $\tau$, and $Y = X/H$.
\end{proof}

\section{The boundary of $\cA_\sigma$}\label{S:BoundaryOfAdmissibleCovers}

Let $\sigma, H$, and $G$ be as in Section \ref{S:HHConstructions} and let $\cA^G$ (resp. $\cA_\sigma^G$) be the moduli space of admissible $G$-regular covers (resp. with monodromy $\sigma$). The purpose of this section is to review facts about $\cA^G$ that will be used in the sequel. One reason to work with $\cA^G$ is that it admits a stratification with strata indexed by trees together with some additional combinatorial data. The construction presented here is equivalent to the ones found in Bertin-Romagny \cite[Section 7]{BertinRomagny} and Schmitt-van Zelm \cite[Section 3.4]{SchmittVanZelm}, although our perspective uses the stable graph of the range of the admissible cover, not the domain. 

\begin{defn}[A construction of all admissible covers]\label{D:ConstructionGluingMap}

\text{}

\emph{Decorated Trees.} Let $T$ be a tree with vertices $V_T$. Each vertex $v$ is equipped with a collection of half edges $H_v$ emanating from it. Some of the half-edges are joined together to form edges. Some of the half-edges that are not joined to others will be called \emph{special} (they will correspond to the special branch points mentioned in Section \ref{S:HHConstructions}). An element $g_e \in G$ is associated to each half-edge $e$ and $G_v$ is the subgroup generated by $\{g_e\}_{e \in H_v}$. For each vertex $v$, there is an ordered list, $\sigma_v$, of the elements in $\{ g_e \}_{e \in H_v}$ so that the product of the elements (in the order given by the list) is the identity. The elements $g_e$ are chosen so that, if $e$ and $e'$ are two half-edges that are glued together, then $g_{e'} = g_e^{-1}$.  A tree with this data is called a \emph{decorated tree}. 

\emph{Marked Hurwitz Space.} If $G'$ is a group and $X$ is a $G'$-regular cover of $\bP^1$, then we may label preimages of branch points by elements of $G'$ in the following way. For each preimage $p$ of a branch point, its stabilizer in $G'$ is a cyclic subgroup $G'_p$. Let $g_p$ be the generator for which there are local coordinates $z$ around $p$ so that $g_p(z) = \zeta_{|G'_p|} z$. Given a generating set $\sigma' = (h_1, \hdots, h_m)$ of $G'$, let $(\mathrm{Hur}_{\sigma'}^{G'})^*$ be the collection of $G'$-regular covers with monodromy $\sigma'$ and, for all $i$, a choice $p_i$ of a preimage of the $i$th branch point so that $g_{p_i} = h_i$. Define $(\cA^G)^*$ analogously. 

\emph{The Gluing Map.} Let $T$ be a decorated tree with vertex set $V_T$. Let $Z := (Z_v)_{v \in V_T}$ be an element of $\prod_{v \in V_T} \left(\mathrm{Hur}_{\sigma_v}^{G_v}\right)^*$. By assumption, for each vertex $v$ and each half-edge $e \in H_v$, $e$ corresponds to a branch point of $Z_v \ra \bP^1$ and we have a choice $p_e$ of preimage of this branch point so that $g_{p_e} = g_e$. Define $X_v := G \times_{G_v} Z_v$, which is the quotient of $G \times Z_v$ by the equivalence relation $(gh,x) \sim (g,hx)$ where $g \in G$, $h \in G_v$, and $x \in Z_v$. Finally, define $\mathrm{Glue}_T(Z)$ to be the $G$-space $\bigsqcup_{v \in V_T} X_v / \sim$ where $\sim$ is the equivalence relation that identifies $g \cdot p_e$ with $g \cdot p_{e'}$ if $e$ and $e'$ are half-edges that are glued together and $g \in G$. The quotient of $\mathrm{Glue}_T(Z)$ by $G$ is a union of copies of $\bP^1$ labelled by $V_T$ and glued together according to $T$. 
\end{defn}

The following is essentially Schmitt-van-Zelm \cite[Proposition 3.13]{SchmittVanZelm}, which appeals to Bertin-Romagny \cite[Proposition 7.13]{BertinRomagny} and the discussion around it.

\begin{prop}\label{P:StratumCoordinates} For $T$ as in Definition \ref{D:ConstructionGluingMap}, $\mathrm{Glue}_T: \prod_{v \in V_T} \left(\mathrm{Hur}_{\sigma_v}^{G_v}\right)^* \ra \cA^G$ is a morphism with finite fibers. It extends to a map $\prod_{v \in V_T} (\cA_{\sigma_v}^{G_v})^* \ra \cA^G$. Moreover, every admissible $G$-regular cover lies in the image of $\mathrm{Glue}_T$ for some $T$.
\end{prop}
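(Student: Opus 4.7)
The plan is to translate Bertin-Romagny's Proposition 7.13 and Schmitt-van Zelm's Proposition 3.13 into the language of Definition \ref{D:ConstructionGluingMap}; all three substantive claims follow from their work, with some bookkeeping. First, to see that $\mathrm{Glue}_T$ is a morphism on the open part $\prod_v (\mathrm{Hur}_{\sigma_v}^{G_v})^*$, I would note that Definition \ref{D:ConstructionGluingMap} is already algebraic: the fiber products $X_v = G \times_{G_v} Z_v$ exist as schemes over the universal family of $(\mathrm{Hur}_{\sigma_v}^{G_v})^*$, and identifying $g \cdot p_e$ with $g \cdot p_{e'}$ for each pair of glued half-edges is a closed gluing of flat families. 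The resulting family is a flat family of $G$-admissible covers (the node condition, namely that the stabilizers on the two sides are inverse cyclic groups acting by inverse local rotations, is guaranteed by the convention $g_{e'} = g_e^{-1}$ in Definition \ref{D:ConstructionGluingMap}), so the universal property of $\cA^G$ produces the morphism.

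For finite fibers, I would argue that the construction is reversible up to a finite combinatorial choice. Given $\mathrm{Glue}_T(Z) \to B$, each vertex $v \in V_T$ corresponds to a component $B_v \cong \bP^1$ of the nodal quotient, and its preimage is a disjoint union of $[G:G_v]$ copies of $Z_v$ permuted simply-transitively by $G/G_v$. Recovering the datum $Z_v \in (\mathrm{Hur}_{\sigma_v}^{G_v})^*$ therefore amounts to selecting a component at each vertex; the gluing relations at edges force compatibility across $T$, so once a choice is fixed at any one vertex of the connected tree $T$, all others are determined. Hence the fiber has cardinality at most $[G:G_{v_0}]$ for any fixed root $v_0$. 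For the extension of $\mathrm{Glue}_T$ to $\prod_v (\cA_{\sigma_v}^{G_v})^*$, I would iterate: an element of $(\cA_{\sigma_v}^{G_v})^*$ is itself of the form $\mathrm{Glue}_{T_v}$ of smooth data by the surjectivity part of the proposition applied to $G_v$, so the composite is obtained from a refined tree $T'$ that attaches $T_v$ at each vertex $v$ of $T$.

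For surjectivity I would reconstruct the decorated tree $T$ from an admissible $G$-cover $f : X \to B$. Since $B$ has arithmetic genus zero, its dual graph is a tree $T$; each vertex $v$ corresponds to a component $B_v \cong \bP^1$, and fixing a connected component $Z_v$ of $f^{-1}(B_v)$ determines its stabilizer $G_v \leq G$, so that $Z_v \to B_v$ is a $G_v$-regular cover. The half-edges in $H_v$ correspond to the branch points of this cover (both the nodes of $B$ lying on $B_v$ and the smooth branch points), and the element $g_e$ is read off from the local monodromy on $Z_v$, with $g_{e'} = g_e^{-1}$ at a paired pair of half-edges by the standard description of an admissible cover at a node. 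Choosing any compatible preimages $p_e$ upgrades each $Z_v$ to an element of $(\mathrm{Hur}_{\sigma_v}^{G_v})^*$, and then by construction $\mathrm{Glue}_T$ applied to this tuple recovers $f$. The main delicate point — and the only step I would not expect to be purely bookkeeping — is verifying that the gluing in Definition \ref{D:ConstructionGluingMap} produces a genuine admissible $G$-cover in the sense of Harris-Mumford (balanced ramification at nodes, correct deformation theory), but this is precisely the content of Bertin-Romagny Proposition 7.13, which I would cite rather than reprove.
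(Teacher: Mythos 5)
Your proposal takes essentially the same route as the paper: the paper gives no independent proof, stating only that the result is ``essentially Schmitt-van Zelm, Proposition 3.13, which appeals to Bertin-Romagny, Proposition 7.13,'' and your argument likewise reduces the substantive content (that the gluing yields a genuine admissible $G$-cover and that the construction is a morphism) to those same references, supplying only the translation into the language of decorated trees. The extra bookkeeping you sketch for finite fibers, the boundary extension, and surjectivity is reasonable and consistent with the cited sources.
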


Suppose that $X \in \cA^G$ has $k$ distinct $G$-orbits of irreducible components. If $C_1, \hdots, C_k$ is a choice of one irreducible component from each orbit, then each $C_i$ is a branched cover of $\bP^1$ with some monodromy $\sigma_i$.  Say that $\{\sigma_1, \hdots, \sigma_k\}$ is the \emph{signature} of $X$. Notice that if we had selected $g \cdot C_1$ instead of $C_1$, then we would replace $\sigma_1$ with the result of conjugating each of its elements by $g$. Therefore, the entries of $\sigma_i$ are only determined up to the action of the braid group and simultaneous conjugation by elements of $G$. Set $\cA^G(T) := \mathrm{im}(\mathrm{Glue}_T)$. If $X$ belongs to $\cA^G(T)$, then its signature is $\{ \sigma_v \}_{v \in V_T}$, which we call the \emph{signature of $T$}.

Suppose that $\{ \sigma_i \}_i$ is a signature. Let $h:= (h_1, \hdots, h_k)$ coincide with $\sigma_j$ up to the action of the braid group and simultaneous conjugation by elements of $G$. Let $\tau_1 = (h_1, \hdots, h_m, h_{m+1} \cdot \hdots \cdot h_k)$ and $\tau_2 = (h_1 \cdot \hdots \cdot h_m, h_{m+1}, \hdots, h_k)$ for some integer $m$. The result of replacing $\sigma_j$ with $\tau_1$ and $\tau_2$ will be called a \emph{simple separation} of the signature. A \emph{separation} of the signature is any set that can be formed by a sequence of simple separations. For decorated trees $T$ and $T'$, let $T \leq T'$ mean that the signature of $T$ is a separation of the signature of $T'$.

\begin{lem}\label{L:Boundary}
If $T$ is a decorated tree and $\cA^G(T) \cap \cA_\sigma^G$ is nonempty, then the signature of $T$ is a separation of $\sigma$. Moreover, $\bigcup_{T' \leq T} \cA^G(T')$ is closed and contains $\overline{\cA^G(T)}$.
%If $\cA^G(T)$ is a codimension one subset of $\cA_\sigma^G$ then the signature of $T$ is a simple separation of $\sigma$. Moreover, given a tree $T$, $\bigcup_{T' \leq T} \cA^G(T')$ is closed and contains $\overline{\cA^G(T)}$.  
%If $T$ is any decorated tree, then the boundary of $\cA_\sigma^G(T)$ is $\bigcup_{T' \leq T} \cA_\sigma^G(T')$.
\end{lem}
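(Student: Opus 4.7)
The approach is to study the effect on the signature of contracting a single edge of the decorated tree, and then iterate. Fix a cover $X \in \cA^G(T) \cap \cA^G_\sigma$. Because $\cA^G_\sigma$ is the closure of the Hurwitz stack $\mathrm{Hur}^G_\sigma$, one may choose a one-parameter analytic family $X_t$ with $X_0 = X$ and $X_t \in \mathrm{Hur}^G_\sigma$ for $t \ne 0$. Using the local product structure of $\cA^G$ near a point of $\cA^G(T)$ provided by Proposition \ref{P:StratumCoordinates} and the deformation theory of admissible covers in Bertin--Romagny, such a family is obtained by smoothing the nodes of the downstairs $\bP^1$-tree one at a time; each smoothing contracts one edge of $T$, and iterating until the tree collapses to a single vertex yields a smooth cover whose monodromy is $\sigma$.

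The content then reduces to analyzing how the signature changes under a single edge contraction. Suppose an edge joins half-edges $e_1 \in H_{v_1}$ and $e_2 \in H_{v_2}$ with $g_{e_2} = g_{e_1}^{-1}$. After a braid move we may assume $\sigma_{v_1} = (h_1, \hdots, h_m, g_{e_1})$ and $\sigma_{v_2} = (g_{e_1}^{-1}, h_{m+1}, \hdots, h_k)$, where the product-one relations force $g_{e_1} = (h_1 \cdots h_m)^{-1}$ and $g_{e_1}^{-1} = h_{m+1} \cdots h_k$. Smoothing fuses the two components into a single $\bP^1$ whose cyclic branch-point ordering concatenates the two sequences with the matched pair $g_{e_1}, g_{e_1}^{-1}$ cancelled, producing $\sigma' := (h_1, \hdots, h_k)$, which has product $1$ as required. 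Read backwards, replacing $\sigma'$ in a signature by the pair $\sigma_{v_1}, \sigma_{v_2}$ is exactly a simple separation. Iterating through the sequence of contractions used to pass from $T$ down to the single-vertex tree with monodromy $\sigma$ expresses the signature of $T$ as obtained from $\{\sigma\}$ by a sequence of simple separations, proving the first claim.

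For the second part, apply the same argument one stratum lower. Given $X' \in \overline{\cA^G(T)}$, Proposition \ref{P:StratumCoordinates} places $X'$ in a unique stratum $\cA^G(T')$; running the smoothing argument in a family that approaches $X'$ from within $\cA^G(T)$ exhibits the signature of $T'$ as arising from the signature of $T$ by a sequence of simple separations, so $T' \leq T$. This yields $\overline{\cA^G(T)} \subseteq \bigcup_{T' \leq T} \cA^G(T')$. For closedness of the right-hand side, observe that a simple separation strictly increases the number of elements of a signature while the sum of lengths grows by exactly $2$, so only finitely many $T'$ satisfy $T' \leq T$. Combining the containment just proved (applied to each $T'$) with the transitivity of the separation relation gives $\overline{\cA^G(T')} \subseteq \bigcup_{T'' \leq T'} \cA^G(T'') \subseteq \bigcup_{T'' \leq T} \cA^G(T'')$, and the finite union is therefore closed.

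The main technical obstacle is checking that the notion of separation is genuinely well-defined on equivalence classes of signatures under the braid group action on ordered tuples and simultaneous conjugation by $G$, so that different choices of representative for $\sigma_{v}$ produce simple separations that lie in the same equivalence class of signatures. A secondary point to verify is that an analytic smoothing of $X$ toward $\mathrm{Hur}^G_\sigma$ can be arranged to smooth nodes independently in a single family, which one extracts from the product description of an \'etale neighborhood of a boundary stratum of $\cA^G$ given in the references cited in Proposition \ref{P:StratumCoordinates}.
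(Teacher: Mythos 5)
Your argument is essentially the paper's: the paper also works one node at a time, using a van Kampen/plumbing analysis to show that smoothing a node concatenates the two monodromy tuples with the matched pair cancelled (so that, read backwards, each degeneration is a simple separation), and then iterates pinching to get both the signature claim and the containment of $\overline{\cA^G(T)}$ in $\bigcup_{T' \leq T}\cA^G(T')$. One small inaccuracy in your closedness step: the set of decorated trees $T'$ with $T' \leq T$ is not literally finite for the reason you give, since one can repeatedly split off two-element tuples of the form $(h, h^{-1})$ (a simple separation with $m=1$), producing infinitely many separations; what saves the argument is that only finitely many such $T'$ have $\cA^G(T')$ nonempty, because stability of the base forces every vertex to carry at least three half-edges (so the number of edges is bounded by $n-3$), and empty strata contribute nothing to the union.
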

\begin{proof}
% By Proposition \ref{P:StratumCoordinates}, $\dim \cA_\sigma^G - \dim \cA^G(T) = |V_T|-1$ where $V_T$ is the vertex set of $T$. So $\cA^G(T)$ has dimension one less than $\cA_\sigma^G$ if and only if $T$ is a tree with two vertices. However, not all such trees necessarily give rise to an admissible cover with monodromy $\sigma$. 
%Note that $\sigma$ is the monodromy of an admissible $G$-regular cover $X$ if and only if there is a sequence $(X_i)$ of smooth $G$-regular covers with monodromy $\sigma$ that converge to $X$ in the moduli space of curves.
Let $X$ be a $G$-regular cover in $\cA^G$ and suppose that it covers $P_1 \vee P_2$, where each $P_i$ is a copy of $\bP^1$ and the cover is branched over $m_i$ points of $P_i$. We will show that the signature of $X$ is a simple separation of $\sigma$. Let $p$ be the node that connects $P_1$ and $P_2$. If not already included, add in $p$ to the set of branch points $B_i \subseteq P_i$ for $i \in \{1,2\}$. Let $U_i$ be a small neighborhood of $p$ in $P_i$ containing no other marked points and with positively oriented boundary $s_i$. Let $b_i$ be any point on $s_i$. Let $\gamma_{i,j}$ be a loop around the $j$th branch point of $P_i$ based at $b_i$ so that $\prod_{j=1}^{m_j} \gamma_{i,j} = \mathrm{id}$ in $\pi_1(P_i - B_i, b_i)$. Suppose furthermore that $\gamma_{2,1} = s_2$ and $\gamma_{1,m_1} = s_1$. By selecting irreducible components of $X$ that cover $P_1$ and $P_2$ respectively and that intersect at a node, the monodromy of these loops determine sequences $\sigma_1 = (g_1, \hdots, g_{m_1})$ and $\sigma_2 = (g_{m_1}^{-1}, g_{m_1+1}, \hdots, g_n)$ in $G$. Let $P = \left( (P_1 - U_1) \sqcup (P_2 - U_2) \right) / \sim$ where $\sim$ is some homeomorphic identification of $s_1$ and $s_2$ that glues $p_1$ to $p_2$ to form a point $b$. Let $B$ be the subset of points corresponding to $B_1 \cup B_2 - \{p\}$. By van Kampen's theorem, $(\gamma_{1,1}, \hdots, \gamma_{1,m_1}, \gamma_{2,2}, \hdots, \gamma_{2,m_2})$ correspond to loops in $\pi_1(P-B, b)$ whose product, in the order specified by the list, is the identity. In particular, $\tau := (g_1, \hdots, g_{m-1}, g_{m+1}, \hdots, g_n)$ specifies a $G$-regular cover $X_{U_1, U_2}$ of $\bP^1$. As $U_1$ and $U_2$ become small this sequence of covers converges to $X$ (this is essentially the plumbing construction) and so $\tau$ is the monodromy of $X$. Therefore, if $X$ belongs to $\cA_\sigma^G$, then $\tau$ belongs to the braid group orbit of $\sigma$, which implies that the signature of $X$ is a simple separation of $\sigma$. Since any element of the boundary of the locus of smooth surfaces (resp. the locus $\cA^G(T)$) in $\cA^G_\sigma$ can be realized by pinching a sequence of $G$-orbits of simple closed curves, the claim follows immediately.
\end{proof}

\begin{thm}[Bertin-Romagny (Th\'eor\`eme 6.20 and Proposition 6.26)]\label{T:RegularToIrregularMap}
There is a surjective map $\mathrm{Mod}_H: \cA_\sigma^G \ra \cA_\sigma$ that sends a $G$-regular cover $X_0$ to $X_0/H$.
\end{thm}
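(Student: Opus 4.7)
The plan is to construct $\mathrm{Mod}_H$ functorially by taking $H$-quotients and to establish surjectivity by constructing Galois closures of admissible covers. Given a family $\pi : \cX \ra S$ of admissible $G$-regular covers with monodromy $\sigma$, the finite group $H \leq G$ acts on $\cX$ over $S$, so the quotient $\cX/H$ exists as a flat family of nodal curves over $S$ equipped with a finite map to the base family of stable $\bP^1$-chains. I would \emph{define} $\mathrm{Mod}_H$ on $S$-points by $\cX \mapsto \cX/H$ and then verify that the output satisfies the axioms of an admissible cover in $\cA_\sigma$, with monodromy $\sigma$ viewed as permutations of $A = G/H$.

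The pointwise verification splits by local geometry. Over the smooth branch locus of the base, the map $X_0 \ra \bP^1_{\mathrm{stable}}$ is a $G$-Galois cover whose local monodromy at a preimage of a branch point is generated by some $g \in G$, so $X_0/H \ra \bP^1_{\mathrm{stable}}$ is a degree $|A|$ cover whose preimages above that branch point are indexed by $H\backslash G/\langle g\rangle$ with cyclic ramification matching the cycle lengths of $g$ acting on $G/H$. This is precisely the monodromy defining $\cA_\sigma$. At a node $p \in X_0$ with stabilizer $G_p \leq G$ lying over a node of $\bP^1_{\mathrm{stable}}$, the admissible cover condition provides local coordinates $xy = 0 \mapsto uv = 0$ with $u = x^n$, $v = y^n$ where $n = |G_p|$; letting $H_p$ denote the subgroup of $H$ fixing $p$, a direct local computation shows that $X_0/H$ acquires a node over the corresponding base node, and that the balancing integer is the one inherited from $X_0$ divided by $|H_p|$. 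Running this analysis on one representative per $H$-orbit of nodes accounts for all nodes of $X_0/H$.

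For surjectivity, given $(f : Y \ra \bP^1_{\mathrm{stable}}) \in \cA_\sigma$ with monodromy $\sigma$, I would construct a Galois closure $\wt{f} : X_0 \ra \bP^1_{\mathrm{stable}}$ componentwise. First normalize $Y$; over each component of the base, take the classical Galois closure of the resulting smooth branched cover, using the monodromy data carried by $\sigma$ (and its separations, in the sense preceding Lemma \ref{L:Boundary}) to pin down the Galois group as $G$. The $\sigma$-data attached to each node of the base specifies conjugacy classes in $G$ governing how preimages of nodes on adjacent irreducible components of the Galois closure must be identified, and performing these gluings is exactly the construction $\mathrm{Glue}_T$ of Definition \ref{D:ConstructionGluingMap}. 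The result $X_0$ belongs to $\cA_\sigma^G$ by Proposition \ref{P:StratumCoordinates} and satisfies $X_0/H \cong Y$ by construction.

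The main obstacle is the boundary behavior at nodes. On the forward side, one must verify that the $H$-quotient of an admissible $G$-cover is again admissible -- that the balancing condition is preserved and that no unexpected singularities are introduced by $H$-elements that might permute branches at a node. On the inverse side, one must check that the Galois closure construction respects the combinatorial data coming from Lemma \ref{L:Boundary}, i.e. that the local monodromy data at each node of $Y$ is consistent with a well-defined signature in the sense of Section \ref{S:BoundaryOfAdmissibleCovers}, so that $\mathrm{Glue}_T$ can be applied. Once these local compatibilities are in place, both functoriality in $S$ and surjectivity of $\mathrm{Mod}_H$ follow from the gluing formalism.
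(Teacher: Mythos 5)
First, note what you are comparing against: the paper does not prove this statement at all --- it is imported verbatim from Bertin--Romagny (Th\'eor\`eme 6.20 and Proposition 6.26), so there is no in-paper argument, and your sketch is best read as a reconstruction of the cited construction. In that spirit your outline is reasonable and close to what the reference does: define the map on families by $\cX \mapsto \cX/H$, check admissibility of the quotient, and handle surjectivity through Galois closures. The forward direction is essentially fine: at a node $p$ the two branches map to distinct irreducible components of the genus-zero base, so no element of $H$ can swap them, the stabilizer is $H \cap G_p$ inside the cyclic group $G_p$, and the balanced ramification index simply gets divided by $|H \cap G_p|$ on both branches, so admissibility and the monodromy (as permutations of $A \cong G/H$) come out as you say.

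The soft spot is surjectivity over the boundary, and you have flagged it without closing it. Your componentwise Galois-closure construction needs to know that the local monodromy data of a degenerate $(f: Y \ra \bP^1) \in \cA_\sigma$ lifts coherently to elements of $G$ assembling into a separation of $\sigma$ (so that $\mathrm{Glue}_T$ applies and lands in $\cA_\sigma^G$ rather than merely $\cA^G$), and that the glued object satisfies $X_0/H \cong Y$; neither is automatic from the data of $Y$ alone --- it comes from $Y$ being a limit of smooth covers with monodromy $\sigma$, which is also the only reasonable reading of the subscript $\sigma$ in $\cA_\sigma$ (if $\cA_\sigma$ were the full Harris--Mumford space of admissible covers of this degree and branching, surjectivity could fail on other components). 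A cleaner route, which repairs this and lets you drop the boundary Galois-closure construction entirely, is properness: once $\mathrm{Mod}_H$ is a morphism (your forward verification), observe that it sends the dense open locus $\Hur_\sigma^G$ onto $\Hur_\sigma$, because the Galois closure of a smooth cover with monodromy $\sigma$ is a $G$-regular cover with monodromy $\sigma$; since $\cA_\sigma^G$ is projective, the image is closed and contains $\Hur_\sigma$, hence contains its closure $\cA_\sigma$. Equivalently, given $Y \in \cA_\sigma$, smooth it to a family $Y_t$ with monodromy $\sigma$, take the Galois closures $X_t \in \cA_\sigma^G$, and extract a limit in the proper space $\cA_\sigma^G$; continuity of the quotient gives $X_\infty/H = Y$. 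With that substitution your proposal becomes a complete argument consistent with the cited result.
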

%%%%%%%%%%%%%
%
% DO NOT DELETE
% EXPLICIT JUSTIFICATION FROM Bertin-Romagny
%
%%%%%%%%%%%%
% (6.31) is the morphism from the space of admissible G-regular covers to the space of covers of the form (X/H -> X/G).
%
% This is then summarized in Proposition 6.20. However, this result is only for smooth curves. 
%
% Definition 6.21: A stable covering is a covering of P^1 given by (X/H -> X/G) where X is G-regular and X/G is a nodal P^1. 
%
% Definition 6.22: There stable covering compactification of intermediate Hurwitz space is the one consisting of covers of stable covers. The notation for this moduli space with Hurwitz data m, G-regular covers of genus g, and intermediate covers of genus g’ is overlain{H_{g,g’,m}}. 
%
% Proposition 6.26: This is the same thing as the Harris-Mumford moduli space of admissible covers. 
%%%%%%%%%%%%%
%
% END DO NOT DELETE
%
%%%%%%%%%%%%

Define $\cA_\sigma^G(T) := \cA_\sigma^G \cap \cA^G(T)$ and $\cA_\sigma(T) := \mathrm{Mod}_H\left( \cA_\sigma^G(T) \right)$.  Given monodromy $\sigma'$, which we think of as specifying permutations of $G/H$, its \emph{genus} is the genus of the cover of $\bP^1$ that it determines. Say that a tree is \emph{good} if it has one vertex $v_0$ so that $\sigma_{v_0}$ has genus $g_Y$, where $g_Y$ is the genus of smooth surfaces in $\cA_\sigma$, and so this vertex is connected to all the special half-edges. Let $\cT_{good}$ be the set of good decorated trees and, for $T \in \cT_{good}$, let $\mathrm{For}_1: \cA_\sigma(T) \ra \mathrm{Hur}_{\sigma_{v_0}}$ be the map that outputs the restriction of the cover to the unique genus $g_Y$ component.

\begin{lem}\label{L:FiberDimension}
Suppose that $T \in \cT_{good}$ and that $v_0$ is the unique vertex whose monodromy has genus $g_Y$.  Let $E$ be the number of unordered pairs of half-edges that are joined to form full edges. The dimension of each fiber of $\mathrm{For}_1$ is $|\sigma| - |\sigma_{v_0}| - E$.
\end{lem}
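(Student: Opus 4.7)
The plan is to compute $\dim \cA_\sigma(T)$ via the gluing parametrization from Proposition \ref{P:StratumCoordinates} and then use dimension subtraction against $\dim \mathrm{Hur}_{\sigma_{v_0}}$.

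First I would compute the total dimension. By Proposition \ref{P:StratumCoordinates}, $\mathrm{Glue}_T : \prod_{v \in V_T} (\mathrm{Hur}_{\sigma_v}^{G_v})^* \to \cA^G$ has finite fibers and image an open subset of $\cA^G(T)$. Each factor $(\mathrm{Hur}_{\sigma_v}^{G_v})^*$ is a finite cover of $\mathrm{Hur}_{\sigma_v}^{G_v}$, which is in turn finite over $\cM_{0,|\sigma_v|}$ and so has dimension $|\sigma_v|-3$. Combined with the finiteness of $\mathrm{Mod}_H$ from Theorem \ref{T:RegularToIrregularMap}, this gives
\[ \dim \cA_\sigma(T) = \sum_{v \in V_T}(|\sigma_v|-3). \]
To evaluate the right hand side I would count half-edges. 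Each of the $E$ edges contributes two half-edges; the remaining half-edges are free and correspond to the entries of the signature of $T$, which is a separation of $\sigma$ and so has exactly $|\sigma|$ entries. Thus $\sum_v |\sigma_v| = |\sigma|+2E$, and since $T$ is a tree $|V_T|=E+1$, yielding $\dim \cA_\sigma(T) = |\sigma|-E-3$.

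Next I would analyze $\mathrm{For}_1$. Surjectivity is seen as follows: given $Y \in \mathrm{Hur}_{\sigma_{v_0}}$, lift $Y$ to a cover $Z_{v_0}$ with the required $G_{v_0}$-structure, freely choose covers $Z_v$ at the remaining vertices, and apply $\mathrm{Glue}_T$ followed by $\mathrm{Mod}_H$. Repeating the dimension computation with $v_0$ omitted shows that every fiber of $\mathrm{For}_1$ is parametrized, up to finite ambiguities, by $\prod_{v \neq v_0} \mathrm{Hur}_{\sigma_v}^{G_v}$, whose dimension is
\[ \sum_{v \neq v_0}(|\sigma_v|-3) = (|\sigma| + 2E - |\sigma_{v_0}|)-3E = |\sigma| - |\sigma_{v_0}| - E, \]
which is the stated formula.

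The main obstacle I anticipate is showing that every fiber (not just the generic one) has the stated dimension. This is handled by the explicit gluing parametrization: once $Z_{v_0}$ (a finite lift of $Y$) is fixed, the non-central factors vary freely over their own Hurwitz spaces, and the remaining ambiguities from $\mathrm{Glue}_T$, $\mathrm{Mod}_H$, and the choice of lift are all finite. A subtle point is that if some $|\sigma_v| < 3$ then $\mathrm{Hur}_{\sigma_v}^{G_v}$ is $0$-dimensional rather than negative-dimensional, so a separate argument would need to confirm that the contribution $|\sigma_v|-3$ is the correct count in the admissible covers setting—or else to rule out such unstable vertices in $\cT_{good}$.
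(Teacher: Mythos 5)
Your argument is correct and matches the paper's own proof in essentially every step: both parametrize the fiber via $\mathrm{Glue}_T$ and the finiteness of $\mathrm{Mod}_H$, identify it (up to finite maps) with $\prod_{v \neq v_0} \mathrm{Hur}_{\sigma_v}^{G_v}$, and conclude by the same half-edge/tree count $\sum_v |\sigma_v| = |\sigma| + 2E$, $|V_T| = E+1$. (Only a cosmetic slip: the signature of $T$ has $|\sigma|+2E$ entries, of which the $|\sigma|$ free half-edges are the ones you want; your displayed count already uses the correct figure.)
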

\begin{proof}
%Let $V_T$ be the vertices of $T$ and let $\sigma_v$ be the monodromy associated to vertex $v$. Let $G_v$ be the group generated by $\sigma_v$. 
Let $v_0$ be the unique vertex whose monodromy has genus $g_Y$. It suffices to compute the generic fiber dimension of 
\[ \prod_{v \in V_T} \left( \mathrm{Hur}_{\sigma_v}^{G_v} \right)^* \xra{\mathrm{Glue}_T} \cA_\sigma^G(T) \xra{\mathrm{Mod}_H} \cA_\sigma(T) \xra{\mathrm{For}_1} \mathrm{Hur}_{\sigma_{v_0}}. \]
This composition can alternatively be written as the following
\[ \prod_{v \in V_T} \left( \mathrm{Hur}_{\sigma_v}^G \right)^* \ra \left(\mathrm{Hur}_{\sigma_{v_0}}^G\right)^* \xra{\mathrm{Mod}_H} \mathrm{Hur}_{\sigma_{v_0}} \]
where the first map is projection. Since the second map is finite, we see that the dimension of every fiber is 
\[ \sum_{v \ne v_0} \dim \left( \mathrm{Hur}_{\sigma_v}^G \right)^* = \sum_{v \ne v_0} |\sigma_v| - 3 = (3 - |\sigma_{v_0}|) + \sum_{v \in V_T} |\sigma_v| - 3. \]
Let $V$ (resp. $H$) denote the number of vertices (resp. half-edges) of $T$. Notice that $V-1 = E$. Then 
\[ \sum_{v \in V_T} |\sigma_v| - 3 = H - 3V = (H-2E) - E - 3 = n - E - 3.  \]
The dimension of the fiber is therefore $|\sigma| - |\sigma_{v_0}| - E$ as desired. 
\end{proof}

% Let $\mathrm{Hur}_\sigma^G$ (resp. $\overline{\mathrm{Hur}_\sigma^G}$) be the preimage of $\mathrm{Hur}_\sigma$ (resp. $\cB_\sigma$). 

Suppose that $\sigma = (h_1, \hdots, h_n)$. We will say that a \emph{merge} is 
\[ \sigma' = (a_1, \hdots, a_\ell) = (h_{k_1+1} \cdot \hdots \cdot h_{k_2}, h_{k_2+1} \cdot \hdots \cdot h_{k_3}, \hdots, h_{k_{\ell-1}+1} \cdot \hdots \cdot h_{k_\ell}) \]
for some choice of sequence $0 = k_1 < k_2 < \hdots < k_\ell = n$. Associate to this merge a tree $T_{\sigma'}$ with a vertex $v_0$ together with one vertex $v_i$ for each \emph{combination} $a_i$, i.e. an $a_i$ that is written as a product of more than two elements of $\sigma$. Let $n_{comb}(\sigma')$ be the number of elements of $\sigma'$ that are combinations. Set $\sigma_{v_0} = \sigma'$ and $\sigma_{v_i} = (a_i^{-1}, h_{k_i+1}, \hdots, h_{k_{i+1}})$. Attach the half-edge labelled $a_i$ on $v_0$ to the half-edge labelled $a_i^{-1}$ on $v_i$. 

\begin{lem}\label{L:TreeFromMerge}
$\cA_\sigma^G(T_{\sigma'})$ is nonempty.
%and the map from it to $\mathrm{Hur}_{\sigma'}$ has fibers of dimension $|\sigma| - |\sigma'| - n_{comb}(\sigma')$. 
\end{lem}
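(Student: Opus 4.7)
The plan is to realize a point of $\cA^G(T_{\sigma'})$ as the degeneration of a family of smooth $G$-regular covers with monodromy $\sigma$. First, for each vertex $v$ of $T_{\sigma'}$ the tuple $\sigma_v$ is, by construction, an ordered generating set of $G_v$ whose product is the identity, so the marked Hurwitz space $(\mathrm{Hur}_{\sigma_v}^{G_v})^*$ is nonempty. Choose a point $Z = (Z_v)_{v \in V_{T_{\sigma'}}}$ in $\prod_v (\mathrm{Hur}_{\sigma_v}^{G_v})^*$ and set $X_0 := \mathrm{Glue}_{T_{\sigma'}}(Z)$; by Proposition \ref{P:StratumCoordinates}, $X_0 \in \cA^G(T_{\sigma'})$. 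It remains to show $X_0 \in \cA_\sigma^G$.

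To do this I would exhibit $X_0$ as the limit of a one-parameter family of smooth $\sigma$-covers. On the base $\bP^1$, choose branch-point configurations $(b_1(t),\dots,b_n(t))$ for $t \in (0,\e)$ such that as $t \to 0$ the $n$ branch points cluster into $\ell$ groups reflecting the merge: for each combination $a_i = h_{k_i+1}\cdots h_{k_{i+1}}$ the points $b_{k_i+1}(t),\dots,b_{k_{i+1}}(t)$ collapse to a single limit point $b_i' \in \bP^1$, and the non-combination branch points are held fixed. For each $t>0$ this configuration, together with the monodromy map determined by $\sigma$, yields a smooth $G$-regular cover $X_t \in \mathrm{Hur}_\sigma^G$. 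As $t \to 0$ the base degenerates to a tree of $\bP^1$'s whose combinatorial type is exactly $T_{\sigma'}$: the main component $P_0$ carries the limit points $b_1',\dots,b_\ell'$ with monodromies $a_1,\dots,a_\ell$, and at each combination $a_i$ a bubble $P_i$ is attached at $b_i'$ carrying the individual branch points with monodromies $h_{k_i+1},\dots,h_{k_{i+1}}$, while the node contributes monodromy $a_i^{-1}$ on the bubble side. Thus the admissible limit has signature $\{\sigma_{v_0},\sigma_{v_1},\dots\}$ matching $T_{\sigma'}$, and up to the finite indeterminacy in gluing data (cf.\ Proposition \ref{P:StratumCoordinates}) this limit can be identified with $X_0$. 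This shows $X_0$ lies in the closure of $\mathrm{Hur}_\sigma^G$ and hence in $\cA_\sigma^G$.

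The one nontrivial point is the monodromy calculation identifying the limit as a $\sigma$-cover, which is exactly the reverse of the van Kampen / plumbing argument already carried out in the proof of Lemma \ref{L:Boundary}: smoothing the node attaching a bubble $P_i$ to $P_0$ replaces the single loop around $b_i'$ (with monodromy $a_i$) by the ordered product of loops around $b_{k_i+1}(t),\dots,b_{k_{i+1}}(t)$ (with monodromies $h_{k_i+1},\dots,h_{k_{i+1}}$), and performing this simultaneous smoothing at every combination-node reassembles the tuple $(h_1,\dots,h_n) = \sigma$. This is the main obstacle, but it follows from the same standard local analysis already invoked earlier. We therefore obtain a point of $\cA_\sigma^G \cap \cA^G(T_{\sigma'}) = \cA_\sigma^G(T_{\sigma'})$, proving the claim.
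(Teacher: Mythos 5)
Your proposal is correct and follows essentially the same route as the paper: a point of $\cA^G(T_{\sigma'})$ exists by the gluing construction (each $\sigma_v$ is a valid monodromy tuple), and it lies in $\cA_\sigma^G$ because plumbing/smoothing the nodes and computing the monodromy via van Kampen, exactly as in Lemma \ref{L:Boundary} run in reverse, reassembles the tuple $\sigma$. The paper's proof is just a one-sentence version of this argument; your write-up supplies the details it leaves implicit.
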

\begin{proof}
The fact that surfaces in $\cA^G(T_{\sigma'})$ have monodromy $\sigma$ is an application of van Kampen's theorem as in Lemma \ref{L:Boundary}.
%The fact that there are admissible covers constructed from $T_{\sigma'}$ with monodromy $\sigma$ is an application of van Kampen's theorem as in Lemma \ref{L:Boundary}. The claim about the fiber dimension is Lemma \ref{L:FiberDimension}.
\end{proof}

The merge is \emph{respectful} if special branch points are not part of combinations in $\sigma'$.

\section{Estimating $d_1$ and the proof of Theorem \ref{T:Main2}}

We continue to use the notation of Sections \ref{S:HHConstructions} and \ref{S:BoundaryOfAdmissibleCovers}. As in Proposition \ref{P:HHConstructionCriterion} we will fix an index $k$ and a stratum $\cH$ of $\Omega \cM_{g_Y}$ and let $\cM$ be a component of $\cM_k \cap \pi^{-1}(\cH)$. Recall that $d_1$ is the dimension of the generic fiber of $\pi: \cM \ra \cH$. We will make the following standing assumption for the section. 

\begin{ass}\label{A:AbelianNormal}
Suppose that $A$ is abelian and normal and that $M_k$ is determined by a set of characters $\Phi_k$ as in Proposition \ref{P:RMConstants}.
\end{ass}

%We will say that if $\sigma'$ is a merge of $\sigma$, then its \emph{genus} is the genus of branched covers of $\bP^1$ with monodromy given by the action of the elements of $\sigma'$ on $A$.
The main result of this section is the following.

% \begin{prop}\label{P:D1Estimate}
% Suppose that $\mathrm{For}_1\left( \cM \right) \cap \cB_\sigma(\sigma')$ is nonempty where $\sigma'$ is a merge of $\sigma$ that (1) respects special branch points, (2) generates a subgroup containing $A$, and (3) has the same genus as $\sigma$. Suppose too that $A$ is abelian and normal and that $M_i$ is determined by a set of characters $\Phi_i$ as in Proposition \ref{P:RMConstants}. Finally suppose that, for some $g \in A$, $|HgH| = |H|^2$ and $2 < \sum_i m_i \left| \frac{1}{|\Phi_i|} \sum_{\chi \in \Phi_i} \chi(g) \right|^2$. Then $d_1$ is bounded above by the fiber dimension of $\sigma'$. 
% \end{prop}

\begin{prop}\label{P:D1Estimate}
Suppose that $\sigma'$ is a respectful merge of $\sigma$ that generates a subgroup containing $A$ and that has the same genus as $\sigma$. Let $G'$ be the subgroup generated by $\sigma'$.  Suppose that $Y_0$ is a branched cover of $\bP^1$ with monodromy given by the action of the elements of $\sigma'$ on $A$. Let $X_0$ be its $G'$-regular cover. Suppose that $\eta \in H^{1,0}(Y_0)$ belongs to $\cH$ and that its pullback to $H^{1,0}(X_0)$ can be written as a sum of $A$-eigenforms whose characters belong to $\Phi_k$. Then $\cM$ is nonempty.

%and let $V_i$ be the span of the $A$-eigenforms in $H^1(X_0)$ whose characters belongs to $\Phi_i$. Suppose that $\eta$ is a holomorphic $1$-form on $Y_0$ that belongs to both $\cH$ and $(V_i)^{H \cap G'} \subseteq H^1(X_0)^{H \cap G'} \cong H^1(Y_0)$. Then $\cM$ is nonempty. 

If additionally, there is some $g \in A$, so that $|HgH| = |H|^2$ and $2 < \sum_i m_i \left| \frac{1}{|\Phi_i|} \sum_{\chi \in \Phi_i} \chi(g) \right|^2$, then $d_1 \leq |\sigma| - |\sigma'| - n_{comb}(\sigma')$. 
\end{prop}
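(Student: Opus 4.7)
The plan is to produce a point of $\cM$ at the boundary of $\cB_\sigma$ by degenerating to the merge $\sigma'$. By Lemma \ref{L:TreeFromMerge}, the stratum $\cA_\sigma^G(T_{\sigma'})$ is non-empty; its image under $\mathrm{Mod}_H$ lies in $\cA_\sigma$. The hypothesis that $\sigma'$ has the same genus as $\sigma$ makes $v_0$ the unique vertex of $T_{\sigma'}$ whose monodromy produces a cover of full genus $g_Y$, and respectfulness of the merge guarantees that every special half-edge is attached at $v_0$. Together these force any admissible cover in $\cA_\sigma(T_{\sigma'})$ to lie in $\cB_\sigma$, with $Y_0\to\bP^1$ as the cover map on the main component and rational tails carrying the combinations. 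I would now extend $\eta$ by zero across the rational tails to obtain a holomorphic stable differential. By hypothesis the pullback of $\eta$ to $X_0$ decomposes into $A$-eigenforms with characters in $\Phi_k$, so this pullback lies in $M_k \subseteq \bC[A]$; taking $H$-invariants places $\eta$ itself in $(M_k^H)^{m_k+\rho_k}$, i.e.\ in the flat extension of $\cV_k$ to $\cB_\sigma$. Since $\eta$ is holomorphic and lies in $\cH$, this gives a point of $\cM_k \cap \pi^{-1}(\cH)$, and we take $\cM$ to be the component containing it.

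\textbf{Reduction of the $d_1$ bound to a finiteness statement.} Let $p_0\in\cM$ be the boundary point just constructed. Since $\pi$ is a regular map and $\cM$ is irreducible, the fiber dimension $\dim(\pi^{-1}(y)\cap\cM)$ is at least $d_1$ at every $y\in\pi(\cM)$; in particular $d_1\le \dim(\pi^{-1}(\pi(p_0))\cap\cM)$. To bound the right-hand side I would stratify $\pi^{-1}(\pi(p_0))\cap\cM$ by the boundary strata of $\cB_\sigma$ and focus on its intersection with $\cB_\sigma(T_{\sigma'})$, which is where $p_0$ lies. Composing with $\mathrm{For}_1:\cA_\sigma(T_{\sigma'})\to\mathrm{Hur}_{\sigma'}$ gives a morphism whose fibers have dimension $|\sigma|-|\sigma'|-n_{comb}(\sigma')$ by Lemma \ref{L:FiberDimension}, using that the number of edges of $T_{\sigma'}$ equals $n_{comb}(\sigma')$. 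So the desired inequality $d_1\le |\sigma|-|\sigma'|-n_{comb}(\sigma')$ reduces to showing that the image of $\pi^{-1}(\pi(p_0))\cap\cM\cap\cB_\sigma(T_{\sigma'})$ in $\mathrm{Hur}_{\sigma'}$ under $\mathrm{For}_1$ is a finite set.

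\textbf{The main obstacle: finiteness via a Hecke trace bound.} This image is the locus of covers $f:Y_0\to\bP^1$ with monodromy $\sigma'$ for which the fixed form $\eta\in H^{1,0}(Y_0)$ still lies in the flat sub-bundle $(M_k^H)^{m_k+\rho_k}$ determined by $f$. By Lemma \ref{L:RMConstants}, this forces $\eta$ to be an eigenform of the Hecke element $HgH$ with eigenvalue $\frac{1}{|\Phi_k|}\sum_{\chi\in\Phi_k}\chi(g)$, and analogously $\lambda_i$ on each other isotypic piece. Under the condition $|HgH|=|H|^2$, the element $HgH$ is realized as an $(|H|,|H|)$-correspondence $C_{g,f}\subseteq Y_0\times Y_0$ whose two projections are both of degree $|H|$, so a positive-dimensional family $f_t$ in the locus would produce a non-trivial one-parameter deformation of correspondences on the \emph{fixed} curve $Y_0$, all sharing $\eta$ as an eigenform with the prescribed spectrum $(\lambda_i)$. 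The identity $\mathrm{tr}\bigl((HgH)^*(HgH)\,|\,H^1(Y_0)\bigr)=\sum_i m_i|\lambda_i|^2$ together with the hypothesis $\sum_i m_i|\lambda_i|^2>2$ says that $C_{g,f}$ is too ``large'' to be homologous to a combination of the diagonal and its transpose, and a Hodge-theoretic / Lefschetz rigidity argument (in the spirit of the method of Delecroix--Rueth--Wright cited throughout the paper) rules out any such moving family preserving a holomorphic $1$-form. Making this rigidity argument precise is the crux of the proof; once it is in hand, the image is finite and the $d_1$ bound follows immediately from the fiber-dimension formula for $\mathrm{For}_1$.
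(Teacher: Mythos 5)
Your outline for non-emptiness follows the paper's route (degenerate via Lemma \ref{L:TreeFromMerge}, use the genus and respectfulness hypotheses to land in $\cB_\sigma$), though you gloss over why the fiber of $\cM_k$ at the boundary point is the span of the eigenforms you describe: that identification is not automatic, since the flat extension of $\cV_k$ over $\cB_\sigma$ involves the induced cover $\mathrm{Ind}_{G'}^G(X_0)$ and requires knowing the multiplicities $m_i$ do not change under the merge; this is the content of Lemma \ref{L:DecompositionOfH1X} and Corollary \ref{C:BoundaryFibers} and you would need some version of it.

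For the $d_1$ bound there are two genuine gaps. First, your reduction is incomplete: it is true that $d_1 \leq \dim\bigl(\pi^{-1}(\pi(p_0))\cap\cM\bigr)$, but you then only bound the piece of that special fiber lying inside the single stratum $\cB_\sigma(T_{\sigma'})$. Components of the fiber through $p_0$ generically lie in \emph{less} degenerate strata (including $\mathrm{Hur}_\sigma$ itself and intermediate trees whose closures contain $p_0$), and nothing in your argument controls their dimension, so the inequality does not follow from what you wrote. The paper avoids this entirely by working with the generic rather than the special fiber: Lemma \ref{L:Boundary} (the partial order on decorated trees and Zariski-closedness of unions of strata) and Corollary \ref{C:TypicalMonodromy} show that $\cM\cap\Omega\cA_\sigma(T_0)$ is Zariski open in $\cM$ for a good tree $T_0$ whose main-vertex monodromy has $\sigma'$ as a merge, after which the generic fiber of $\pi$ is bounded by the fiber dimension of $\mathrm{For}_1$ (Lemma \ref{L:FiberDimension}) plus the fibers of $\mathrm{For}_2$; your proposal has no substitute for this step. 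Second, and more seriously, the finiteness statement you correctly identify as ``the crux'' is exactly the part you do not prove: you appeal to an unspecified ``Hodge-theoretic / Lefschetz rigidity argument.'' The paper does not need to invent one; it quotes Ellenberg's theorem (Theorem \ref{T:FiniteFibersCriterion}) asserting that $\mathrm{Hur}_{\sigma'}\ra\cM_{g_Y}$ has finite fibers once the double coset $HaH$ has $|H|^2$ elements and $D_\gamma\cdot D_\gamma<0$, extends it to the disconnected case $G'\ne G$ (Corollary \ref{C:FiniteFibersCriterion}), and computes $D_\gamma\cdot D_\gamma$ by the Lefschetz fixed point theorem (Lemma \ref{L:IntersectionNumber}), where negativity is precisely the hypothesis $2<\sum_i m_i\left|\frac{1}{|\Phi_i|}\sum_{\chi\in\Phi_i}\chi(a)\right|^2$. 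Note also that the paper's finiteness is for the plain forgetful map, with no eigenform condition; as written, your sketch leaves the decisive step unestablished.
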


We first observe that this proposition implies Theorem \ref{T:Main2}.

\begin{proof}[Proof of Theorem \ref{T:Main2} given Proposition \ref{P:D1Estimate}:] By Propositions \ref{P:CohomologyDecomposition} and \ref{P:RMConstants}, $2g_Y = \sum_i m_i$. Set $\cH = \cH(\kappa_1^{|A|/|g_1|}, \hdots, \kappa_\ell^{|A|/|g_\ell|})$. By Corollary \ref{C:MainOrderOfVanishingComputation}, $\pi(\cM_1) \subseteq \cH$. By Proposition \ref{P:RMConstants}, $\cV_1$ is Hodge-compatible and defined over $\bR$ since $\Phi_1$ is closed under complex conjugation. Recall that $d_1$ and $d_2$ are defined in Proposition \ref{P:HHConstructionCriterion}. Since $\cM = \cM_1$ and $m_1 \ne 0$, $\cM$ is nonempty and so $d_2 = 0$ and, by Proposition \ref{P:D1Estimate}, $d_1 = 0$ (using $\sigma' = \sigma$). Therefore, by Proposition \ref{P:HHConstructionCriterion}, $\pi(\cM)$ is a rank $\frac{m_1}{2}$ rel $\rho_1$ invariant subvariety and, by Proposition \ref{P:RMConstants}, the field of definition is the indicated one.
\end{proof}

We will now prove Proposition \ref{P:D1Estimate}. Given a subset $S$ of $\cA_\sigma$, let $\Omega S$ be its preimage in $\Omega \cA_\sigma$. 

%Given a list $\sigma'$ of elements of $G$, let $\cB_\sigma(\sigma')$ be the collection of maps in $\cB_\sigma$ so that the restriction of the maps to the unique genus $g_Y$ component of the domain has monodromy given by $\sigma'$. Finally, let $\mathrm{For}_1: \cM \ra \cB_\sigma$ be the bundle projection.

\begin{lem}\label{L:StrataNonemptiness}
Under the assumptions of Proposition \ref{P:D1Estimate},  there is a good decorated tree $T$ so that $\cM \cap \Omega \cA_\sigma(T)$ is nonempty. Moreover, if $v_0$ is the vertex whose monodromy has genus $g_Y$, then all other vertices are connected by an edge to $v_0$ and $\sigma'$ is a merge of $\sigma_{v_0}$.
% if $\sigma_T$ is the monodromy of the restriction to the unique genus $g_Y$ component of surfaces in $\cA_\sigma(T)$, $T$ has at least $n_{comb}(\sigma_T)+1$ vertices and $\sigma'$ is a merge of $\sigma_T$.
\end{lem}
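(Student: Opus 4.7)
I take $T := T_{\sigma'}$, the decorated tree constructed just before Lemma \ref{L:TreeFromMerge}. Its central vertex $v_0$ carries monodromy $\sigma_{v_0} = \sigma'$, which by hypothesis has the same genus $g_Y$ as $\sigma$; every other vertex $v_i$ is attached to $v_0$ by a single edge. Because the merge is respectful, the special entries of $\sigma$ appear uncombined in $\sigma'$, so all special half-edges remain attached to $v_0$, and $T$ is good. The two ``moreover'' assertions hold by construction: every $v_i$ is joined to $v_0$ by an edge, and $\sigma' = \sigma_{v_0}$ is the trivial merge of $\sigma_{v_0}$.

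I next exhibit an admissible cover in $\cB_\sigma \cap \cA_\sigma(T)$ whose central component is $Y_0$. In the gluing construction of Definition \ref{D:ConstructionGluingMap}, I take $Z_{v_0} := X_0$ (valid since $G_{v_0} = G'$ and $X_0$ is $G'$-regular with monodromy $\sigma_{v_0}$) and pick any $Z_{v_i} \in (\mathrm{Hur}_{\sigma_{v_i}}^{G_{v_i}})^*$, which exists by Lemma \ref{L:TreeFromMerge}. Applying $\mathrm{Glue}_T$ produces an admissible $G$-regular cover whose central piece is $G \times_{G'} X_0$; since $A \leq G'$ and $G = AH$, we have $G = G' H$, so $|H \backslash G / G'| = 1$, and under $\mathrm{Mod}_H$ the central component of the resulting admissible cover in $\cA_\sigma(T)$ is $X_0/(G' \cap H) = Y_0$. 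Because every admissible cover in $\cA_\sigma$ has arithmetic genus $g_Y$ and $Y_0$ already has geometric genus $g_Y$, every non-central component must have genus $0$, the dual graph must be a tree, and each non-central component must meet $Y_0$ at a single node. Respectfulness places no special marked point on any non-central component, so after forgetting non-special marked points each non-central component becomes an unstable rational tail and contracts under stabilization, leaving smooth $Y_0$ of genus $g_Y$. Thus the admissible cover lies in $\cB_\sigma$.

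Finally, let $\omega$ be the stable differential equal to $\eta$ on $Y_0$ and to $0$ on every non-central component; this is well-defined because each non-central component is $\bP^1$ with a single node, where any section of the dualizing sheaf must vanish. Then $\pi(\omega) = \eta \in \cH$, so it remains to show $\omega$ lies in the flat extension $\cV_k^{1,0}$ at this boundary point. Pulling $\omega$ back to the $G$-regular admissible cover, it vanishes on non-central components and, on each copy of $X_0$ in the central piece, restricts to a $G$-translate of $\phi^* \eta$. By hypothesis, $\phi^* \eta$ is a sum of $A$-eigenforms whose characters lie in $\Phi_k$; since $M_k$ is spanned by $\Phi_k$, the pullback of $\omega$ lies in the $M_k$-isotypic component of $H^{1,0}$ of the $G$-regular admissible cover, and since $\omega$ is $H$-invariant by construction, it represents an element of the $M_k^H$-component, which is the fiber of $\cV_k^{1,0}$ at this point. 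Hence $\omega \in \cM_k \cap \pi^{-1}(\cH)$, and taking $\cM$ to be the component containing $\omega$ finishes the argument. The main obstacle is this final identification: matching the flat extension of $\cV_k$ across the boundary of $\cB_\sigma$ with the $G$-isotypic decomposition of $H^1$ of the admissible cover. This relies on extending the character computation of Lemma \ref{L:H1Character} and Proposition \ref{P:CohomologyDecomposition} to admissible covers and on the $G$-equivariance of the moduli space $\cA_\sigma^G$.
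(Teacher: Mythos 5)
Your overall route coincides with the paper's: take $T = T_{\sigma'}$ from Lemma \ref{L:TreeFromMerge}, glue a boundary admissible cover whose unique genus $g_Y$ component is $Y_0$, check it lies in $\cB_\sigma$ (respectfulness keeps the special points on $Y_0$), and exhibit the form supported on $Y_0$ equal to $\eta$. That part of your argument is correct, and in places more detailed than the paper's. The genuine gap is the step you yourself flag as ``the main obstacle'': the assertion that the $H$-invariant part of the $\Phi_k$-isotypic holomorphic (stable) differentials on the admissible $G$-cover \emph{is} the fiber of the flat extension $\cV_k^{1,0} = \cM_k$ at this boundary point. This is not formal, and it is exactly where the paper does its work. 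Lemma \ref{L:DecompositionOfH1X} proves that $M_k$ occurs in $H^1(\mathrm{Ind}_{G'}^{G}(X_0))$ with multiplicity exactly $m_k$ and that $V_k^{H\cap G'}$ is identified with $(M_k^H)^{m_k}$; the multiplicity comparison there, combined with the genus identity $2g_Y-2 = -2+\sum_j m_j$, is precisely where the hypotheses that $\sigma'$ contains $A$ and has the same genus as $\sigma$ enter. Corollary \ref{C:BoundaryFibers} then identifies the boundary fiber of $\cM_k$ with the holomorphic forms in $(V_k^{H\cap G'})^{m_k}$, using the local constancy of the isotypic multiplicities of $\Omega\cA^G$ near the boundary point (the Bertin--Romagny/Delecroix--Rueth--Wright input) together with a dimension count showing no holomorphic isotypic forms are supported off the central piece. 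Without these inputs, your claim that the flat (Gauss--Manin) limit of $\cV_k$ is computed by the representation-theoretic decomposition of the nodal fiber is unsupported: a priori the limiting subspace need not be detectable from the $G'$-structure on $Y_0$ alone, and the intersection with the Hodge bundle could behave badly in the limit.

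A smaller inaccuracy sits inside the same step: on $\mathrm{Ind}_{G'}^{G}(X_0)$ the span of $A$-eigenforms with characters in $\Phi_k$ is in general strictly larger than the $M_k$-isotypic component, since irreducible $G$-representations not occurring in $\bC[A]$ can contain $A$-characters from $\Phi_k$. So ``the pullback of $\omega$ lies in the $M_k$-isotypic component'' does not follow directly from the hypothesis on $\phi^*\eta$. The $H$-invariance of $\omega$ rescues the conclusion, because the extra summands have no $H$-invariants; but making that precise is again the decomposition $W_k \cong M_k^{m_k}\oplus V$ established in Lemma \ref{L:DecompositionOfH1X}, i.e., it is repaired by the same lemma your sketch leaves unproved.
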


We will defer the proof of this lemma and focus on the following consequence. 

\begin{cor}\label{C:TypicalMonodromy}
The tree $T$ in Lemma \ref{L:StrataNonemptiness} may be chosen so that $\cM \cap \Omega \cA_\sigma(T)$ is Zariski open in $\cM$.
\end{cor}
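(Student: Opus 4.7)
The plan is to exploit the irreducibility of $\cM$ together with a finite stratification of $\cB_\sigma$. Since $\cM$ is a component of $\cM_k \cap \pi^{-1}(\cH)$, it is irreducible. By Lemma \ref{L:Boundary}, any decorated tree $T$ with $\cA^G(T) \cap \cA_\sigma^G$ nonempty has signature a separation of $\sigma$; since $\sigma$ admits only finitely many separations (each element of $\sigma$ can be split finitely many ways), the collection of trees $T$ appearing in this way is finite, so $\cA_\sigma$ is covered by finitely many locally closed pieces $\cA_\sigma(T)$.

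I would pull this stratification back via $\pi$ and intersect with $\cM$, which is contained in $\Omega \cB_\sigma$ by Lemma \ref{L:BoundaryConstruction}. This yields a finite decomposition
\[ \cM = \bigsqcup_T \cM \cap \Omega \cA_\sigma(T) \]
into locally closed subsets. Because $\cM$ is irreducible, exactly one of these subsets is Zariski dense in $\cM$, and being locally closed it is then automatically Zariski open. Call the corresponding tree $T^\ast$.

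The main obstacle is showing that $T^\ast$ may be taken good. Lemma \ref{L:StrataNonemptiness} already supplies a good tree $T_0$ with $\cM \cap \Omega \cA_\sigma(T_0)$ nonempty, confirming the decomposition is nontrivial and so $T^\ast$ exists. To identify $T^\ast$ with a good tree, I would argue that any $T$ with $\cA_\sigma(T) \cap \cB_\sigma$ nonempty is automatically good: membership in $\cB_\sigma$ means the stable model of $(Y, \Sigma_{sp})$ is smooth of genus $g_Y$ with distinct special marked points, which forces a unique component of the nodal range to carry the full genus of the normalized domain and to contain all special branch points on the domain side — precisely the definition of good. Applied to $T^\ast$, whose stratum meets $\Omega \cB_\sigma$ nontrivially via the containment $\cM \subseteq \Omega \cB_\sigma$, this yields $T^\ast$ good, completing the proof.
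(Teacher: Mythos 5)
There are two genuine gaps. First, your decomposition argument hinges on the assertion that the pieces $\cM \cap \Omega\cA_\sigma(T)$ are locally closed, which is never justified and is not available from what the paper establishes: Proposition \ref{P:StratumCoordinates} only makes $\cA^G(T) = \mathrm{im}(\mathrm{Glue}_T)$ a constructible set, and Lemma \ref{L:Boundary} only gives the one-sided statement that $\bigcup_{T' \leq T}\cA^G(T')$ is closed and contains $\overline{\cA^G(T)}$. With constructibility alone, your ``dense and locally closed, hence open'' step degrades to ``contains a dense open subset of $\cM$,'' which is not yet the statement. The paper's proof is engineered precisely to avoid needing local closedness: it forms the downward-closed family $\cT$ of trees having no $T_1 \leq T_0$ with a vertex equal to $\sigma'$, uses Lemma \ref{L:Boundary} and projectivity of $\cA^G$ to see that $\bigcup_{S \in \cT}\Omega\cA_\sigma(S)$ is Zariski closed, and then takes a tree $T_0 \notin \cT$ that is \emph{minimal} for $\leq$ among those with $\Omega\overline{\cA_\sigma(T_0)} \supseteq \cM'$; openness comes from removing the finitely many closed sets $\overline{\cA_\sigma(T')}$, $T' < T_0$, and minimality guarantees this removes a proper closed subset. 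If you want to keep your route, you must either prove local closedness of the strata or import this minimality mechanism.

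Second, and more seriously, your claim that $\cA_\sigma(T) \cap \cB_\sigma \neq \emptyset$ forces $T$ to be good is false. Membership in $\cB_\sigma$ only says that the stable model of the domain with its \emph{special} markings is smooth of genus $g_Y$ with distinct points; it does not prevent preimages of a special branch point from sitting on rational tails of the domain that are contracted by stabilization, and hence does not force the special branch points to lie on the base component carrying the genus. Concretely, take $A = G = \bZ/2$, $\sigma = (1,1,1,1,1,1)$ (a genus two double cover of $\bP^1$) with $b_1$ special, and degenerate the base into two spheres with $b_1$ alone on one vertex and $b_2, \hdots, b_6$ on the other: the domain is a smooth genus two curve with one rational tail carrying the unique preimage of $b_1$, the tail is contracted by $t \circ s$, so the cover lies in $\cB_\sigma$, yet the special half-edge is not attached to the genus vertex, i.e.\ the tree is not good. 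The paper obtains the structural properties of the new tree $T_0$ differently: since $\overline{\cA_\sigma(T_0)}$ contains a surface of the original good star-shaped stratum $\cA_\sigma(T)$ from Lemmas \ref{L:TreeFromMerge} and \ref{L:StrataNonemptiness}, the signature of $T$ is a separation of that of $T_0$, which is what transfers goodness and the star shape to $T_0$. Note also that the corollary is later used in Proposition \ref{P:D1Estimate} through the additional conclusions of Lemma \ref{L:StrataNonemptiness} (all vertices adjacent to $v_0$, and $\sigma'$ a merge of $\sigma_{v_0}$); your proposal does not address how the dense stratum retains these properties, whereas the paper's comparison $T \leq T_0$ delivers them.
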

\begin{proof}
Let $\cM'$ be an irreducible component of $\cM$ containing a surface in $\Omega \cA_\sigma(T)$. Let $\cT$ be the collection of decorated trees $T_0$ for which there is no decorated tree $T_1 \leq T_0$ that has a vertex $v$ so that $\sigma_v = \sigma'$. If $T_0 \in \cT$ and $T_1 \leq T_0$, then $T_1 \in \cT$. By Lemma \ref{L:Boundary}, $\bigcup_{S \in \cT} \cA^G(S)$ is a closed subvariety of $\cA^G$, which is a projective variety. Therefore, $\bigcup_{S \in \cT} \cA_\sigma(S)$ is Zariski-closed as well, since the image of projective varieties are Zariski-closed. It follows that $\bigcup_{S \in \cT} \Omega \cA_\sigma(S)$ is also Zariski closed. By this and by Lemma \ref{L:StrataNonemptiness}, there is a decorated tree $S$ that is not in $\cT$ and so that $\Omega \overline{\cA_\sigma(S)}$ contains $\cM'$. Let $T_0$ be the minimal such tree under $\leq$. Then
\[ \cM \cap \Omega \cA_\sigma(T_0) \supseteq \cM' - \bigcup_{T' < T_0} \overline{\cA_\sigma(T')}  \]
which is Zariski open in $\cM'$ by the minimality of $T_0$. Note that surfaces in $\cA_\sigma(T_0)$ have the property that they cover an element of $\overline{\cM_{0,n}}$ whose stable graph is $T_0$. Since $\overline{\cA_\sigma(T_0)}$ contains a surface in $\cA_\sigma(T)$, it follows that $T_0$ also has the property that all vertices are connected by edges to the unique vertex with monodromy of genus $g_Y$. So we may replace $T$ with $T_0$. 
%
% Let $T$ be any decorated tree so that $\Omega \cA_\sigma(T)$ contains $\cM$, but $\Omega \cA_\sigma(T')$ does not for any $T' \leq T$. Then $U := \cM - \bigcup_{T' \leq T} \left( \cM \cap \Omega \overline{\cA_\sigma(T')} \right)$ is the desired Zariski open set. Notice that $\cM \cap \Omega \overline{\cA_\sigma(T')}$ is closed by Lemma \ref{L:ProjectiveStrata}. The second claim also follows from Lemma \ref{L:ProjectiveStrata} and the assumption that $\mathrm{For}_1\left( \cM \right) \cap \cB_\sigma(\sigma')$ is nonempty.
%
% Since $\cA_\sigma^G$ is projective, it follows that the image of $\overline{\cA_\sigma^G(T)}$ in $\cA_\sigma$ is projective too for any tree as in Definition \ref{D:ConstructionGluingMap}. Call this image $\overline{\cA_\sigma(T)}$ and let $\Omega \overline{\cA_\sigma(T)}$ be its preimage in $\Omega \cA_\sigma$. Recall that each vertex $v$ of $T$ is labelled by a list of elements of $G$ up to the action of the braid group. Let $\cT$ be the list of trees that appear in the construction of Definition \ref{D:ConstructionGluingMap} of boundary strata of $\cA_\sigma^G$. Let $\cT'$ be the subset of trees that either have no vertex $v$ so that $\sigma_v$ has genus $g_Y$ or that does have one but not one that has $\sigma'$ as a merge. Notice that if $\cA_\sigma(T') \subseteq \cA_\sigma(T)$ for some $T \in T'$, then $T' \in \cT$. By assumption, $\cM$ is not contained in $C := \bigsqcup_{T \in \cT} \Omega \cA_\sigma(T)$. Therefore, $\cM - C$ is open in $\cM$ as desired. 
\end{proof}

\begin{lem}\label{L:FiniteFibers}
Under the assumptions of Proposition \ref{P:D1Estimate}, the map $\mathrm{For}_2: \mathrm{Hur}_{\sigma'} \ra \cM_{g_Y}$, which sends a branched cover to its unmarked domain, has finite fibers.
\end{lem}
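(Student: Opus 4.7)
The approach is to use the Hecke correspondence $T_g := HgH$ associated to the element $g \in A$ provided in the hypotheses of Proposition~\ref{P:D1Estimate}. For each cover $(f: Y \to \bP^1) \in \mathrm{Hur}_{\sigma'}$ with Galois closure $\phi: X \to \bP^1$, this double coset induces an endomorphism $T_g$ of $\Jac(Y)$ by pulling back along $\phi$, applying $g$, and pushing forward. By Lemma~\ref{L:RMConstants}, its eigenvalues on the isotypic components $(M_i^H)^{m_i} \subseteq H^1(Y, \bC)$ are $\lambda_i := \frac{1}{|\Phi_i|} \sum_{\chi \in \Phi_i} \chi(g)$. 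The hypothesis $|HgH| = |H|^2$ (equivalently, $g$ has trivial stabilizer in $H$) ensures $T_g$ is a genuine $|H|$-to-$|H|$ correspondence rather than a multiple of the diagonal, and the norm bound $\sum_i m_i|\lambda_i|^2 > 2$ ensures that $T_g$ is non-scalar.

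Suppose for contradiction that some fiber $\mathrm{For}_2^{-1}(Y_0)$ is positive-dimensional, and choose an irreducible algebraic curve $C$ of covers $\{f_t\}_{t \in C}$ all with domain $Y_0$. Since the Jacobian $\Jac(Y_0)$ is fixed, the family $t \mapsto T_g^{(t)} \in \End(\Jac(Y_0))$ is an algebraic map from $C$ into a finitely generated $\bZ$-module, which is countable and discrete. So $T_g^{(t)}$ is locally constant, hence constant along $C$. In particular, the isotypic decomposition of $H^1(Y_0, \bC)$ for the Hecke algebra $\bC[H\backslash G /H]$ and the induced Hodge subspaces $(M_i^H)^{m_i} \cap H^{1,0}(Y_0)$ are the same for every $t \in C$.

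To derive a contradiction, I would upgrade this rigidity to rigidity of the entire cover. One can run the same argument simultaneously for all $g \in A$ with trivial $H$-stabilizer: each such element gives another Hecke endomorphism that must be constant on $C$. Since $A$ acts on $\bC[A]$ faithfully and $A$ is abelian and normal in $G$, the collection of all such endomorphisms generates enough of the action of $\bC[H\backslash G/H]$ on $H^1(X_t)^H = H^1(Y_0)$ to recover, up to finite ambiguity, the $G'$-module structure on $H^1(X_t, \bC)$, and hence the cover $X_t \to \bP^1$ itself via its branch locus (which is encoded in the singular support of the Hecke correspondences). Consequently the family $\{f_t\}$ would be constant, contradicting $\dim C > 0$.

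The main obstacle is making rigorous this final reconstruction step: extracting the cover $f: Y_0 \to \bP^1$ from the Hecke data on $\Jac(Y_0)$ is essentially a Torelli-type statement for the pair $(Y_0, f)$, and it is not \emph{a priori} clear that the whole family of Hecke correspondences parametrized by $A$ suffices to pin down the branch locus on $\bP^1$. A cleaner alternative would be to prove the lemma infinitesimally: show that at every point of $\mathrm{Hur}_{\sigma'}$ the differential of $\mathrm{For}_2$ has trivial kernel, using that any nontrivial first-order deformation of $f$ preserving $Y_0$ would force a nontrivial first-order deformation of $T_g$ inside the discrete module $\End(\Jac(Y_0))$, which is impossible.
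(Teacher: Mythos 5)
Your proposal has a genuine gap, and you identify it yourself: the rigidity you establish is only rigidity of the induced endomorphism $T_a = \pi_H a \pi_H$ of $\Jac(Y_0)$ (constancy along a putative positive-dimensional fiber, via countability of $\End(\Jac(Y_0))$), and this does not pin down the cover $f: Y_0 \ra \bP^1$. The "reconstruction" step -- recovering the branch locus and the map from the Hecke data, even using all $a \in A$ with trivial $H$-stabilizer -- is exactly the hard part, and it is neither carried out nor obviously true; the infinitesimal variant at the end has the same problem, since a first-order deformation of $f$ fixing $Y_0$ need not move $T_a$ at all. Note also that your quantitative hypothesis $2 < \sum_i m_i \bigl| \frac{1}{|\Phi_i|}\sum_{\chi \in \Phi_i}\chi(a)\bigr|^2$ is only invoked to say $T_a$ is "non-scalar," which is not where it is needed.

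The paper's proof works with the Hecke correspondence as an algebraic curve rather than as an endomorphism of the Jacobian: for $\gamma = HaH$ one takes $D_\gamma \subseteq Y_0 \times Y_0$, the image of the graph of $a$ under $\phi \times \phi$, and computes its self-intersection by the Lefschetz fixed point theorem (Lemma \ref{L:IntersectionNumber}), obtaining
\[ D_\gamma \cdot D_\gamma = \frac{|H|^2}{|H \cap a^{-1}Ha|^2}\Bigl( 2 - \sum_i m_i \Bigl| \frac{1}{|\Phi_i|} \sum_{\chi \in \Phi_i} \chi(a)\Bigr|^2 \Bigr), \]
so the inequality $>2$ is precisely what makes $D_\gamma \cdot D_\gamma < 0$. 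Negative self-intersection is the rigidity mechanism that replaces your missing Torelli-type step: a curve class with negative square cannot be represented by a moving algebraic family of distinct curves (distinct irreducible curves intersect non-negatively), and Ellenberg's Theorem 4.1 (quoted as Theorem \ref{T:FiniteFibersCriterion}, extended to $G \ne G'$ in Corollary \ref{C:FiniteFibersCriterion} by comparing $D_{HaH}$ with the correspondences $\Delta_{H'a'H'}$ on the possibly disconnected closure) converts this into finiteness of the fibers of $\mathrm{For}_2$. To complete your argument you would need either to prove the reconstruction statement you flag, or to pass from the endomorphism to the actual correspondence curve and use the negativity of its self-intersection as above -- at which point you have reproduced the paper's route.
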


% Again, we will defer the proof. Let $\mathrm{For}_2: \cB_\sigma \ra \cM_{g_Y}$ be the map that forgets the cover and remembers only the unique genus $g_Y$ component of the domain. 

\begin{proof}[Proof of Proposition \ref{P:D1Estimate} given Lemmas \ref{L:StrataNonemptiness} and \ref{L:FiniteFibers}:] Let $T$ be as in Corollary \ref{C:TypicalMonodromy}. Recall that $\pi: \Omega \cB_\sigma \ra \Omega \cM_{g_Y}$ is the map that inputs $(f: Y' \ra P, \eta)$, where $f \in \cB_\sigma$ and $\eta$ is a holomorphic $1$-form on the unique genus $g_Y$ component $Y'$, and outputs $(Y', \eta)$. Therefore, $\pi^{-1}(Y', \eta)$ can be identified with the fiber of $\mathrm{For}_2: \cB_\sigma \ra \cM_{g_Y}$. It follows that $d_1$ is bounded above by the generic fiber dimension of the composition 
\[ \cA_\sigma(T) \xra{\mathrm{For}_1} \mathrm{Hur}_{\sigma_{v_0}} \xra{\mathrm{For}_2} \cM_{g_Y} \]
where $v_0$ is the unique vertex whose monodromy has genus $g_Y$. By Lemma \ref{L:FiniteFibers}, $\mathrm{For}_2$ has finite fibers. By Lemma \ref{L:FiberDimension}, $\mathrm{For}_1$ has fiber dimension given by $|\sigma| - |\sigma_{v_0}| - E$ where $E$ is the number of full edges in $T$. Since every edge is connected to $v_0$, $E = n_{comb}(\sigma_{v_0})$. So the fiber dimension is bounded above by $|\sigma| - |\sigma_{v_0}| - n_{comb}(\sigma_{v_0})$, which is bounded above by $|\sigma| - |\sigma'| - n_{comb}(\sigma')$ since $\sigma'$ is a merge of $\sigma_{v_0}$. 
%
%
% Let $T$ be as in Lemma \ref{L:TypicalMonodromy} and let $\sigma''$ be the monodromy of the unique genus $g_Y$ component of surfaces in $\cA_\sigma(T)$. Consider the following sequence,
% \[ \cM \cap \Omega \cA_\sigma(T) \xra{\mathrm{For}_1} \cA_\sigma(T) \xra{\mathrm{For}_3} \mathrm{Hur}_{\sigma''} \xra{\mathrm{For}_2} \cM_{g_Y} \]
% where $\mathrm{For}_3$ is the map that inputs a function $f: X \ra P$ in $\mathrm{Hur}_\sigma(\sigma'')$ and then outputs its restriction to the unique genus $g_Y$ irreducible component. The preceding paragraph shows that $d_1$ is bounded above by the generic fiber dimension of $\mathrm{For}_2 \circ \mathrm{For}_3: \cA_\sigma(T) \ra \cM_{g_Y}$. By Lemma \ref{L:FiniteFibers}, it suffices to show that $\mathrm{For}_2$ has generic fiber dimension bounded above by the fiber dimension of $\sigma'$, but this is just Lemma \ref{L:FiberDimension}.
\end{proof}

 The remainder of the section will be devoted to the proofs of Lemmas \ref{L:StrataNonemptiness} and \ref{L:FiniteFibers}.

 \subsection{Proof of Lemma \ref{L:StrataNonemptiness}}
 
 \begin{lem}\label{L:DecompositionOfH1X}
Let $Y_0$ be a genus $g_Y$ cover of $\bP^1$ with monodromy given by the action of a collection of elements $\sigma'$ of $G$ on $A$. Suppose that $\sigma'$ is a merge of $\sigma$ and that the group $G'$ it generates contains $A$ and has intersection $H'$ with $H$. Let $X_0$ be the Galois closure of $Y_0$ and set $\mathrm{Ind}_{G'}^G(X_0) := G \times_{G'} X_0$. Then the multiplicity with which $M_i$ appears in $H^1(\mathrm{Ind}_{G'}^G(X_0))$ is $m_i$. Moreover,  
\[ H^1(Y_0) \cong H^1(X_0)^{H'} \cong H^1(\mathrm{Ind}_{G'}^G(X_0))^H \cong \bigoplus_i (M_i^H)^{m_i} \]
and, if $V_i$ is the subspace of $H^1(X_0)$ spanned by $A$-eigenforms with character in $\Phi_i$, then $V_i^{H'}$ is identified under these isomorphisms with $(M_i^H)^{m_i}$. 
%
% If $A$ is abelian and is contained in the group generated by $\sigma'$, then $H^1(Y_0) = \bigoplus_i (M_i^H)^{m_i}$. 
%
% If $Y$ is any point in $\overline{\mathrm{Hur}}_\sigma$, and $X$ is a disjoint union of Riemann surfaces equipped with a $G$-action so that $X/G = Y$, then $H^1(Y) = \bigoplus_i (M_i^H)^{m_i}$. In particular, the multiplicity of $M_i$ in $H^1(X)$ does not depend on whether or not $Y$ is in the boundary of $\overline{\mathrm{Hur}}_\sigma$. 
\end{lem}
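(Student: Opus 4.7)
The plan is to split the proof into three parts: establishing the chain of isomorphisms, computing the multiplicity $\tilde m_i$ of $M_i$ in $H^1(\mathrm{Ind}_{G'}^G X_0)$, and identifying $V_i^{H'}$ with the $M_i^H$-isotypic summand. The main obstacle will be the per-$M_i$ multiplicity identity, which will require a specialization argument.

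For the chain of isomorphisms, the identification $H^1(Y_0) \cong H^1(X_0)^{H'}$ is standard since $Y_0 = X_0/H'$. Next, $\mathrm{Ind}_{G'}^G X_0 = G \times_{G'} X_0$ is topologically a disjoint union of $[G:G']$ copies of $X_0$ permuted by $G$ via the left action on $G/G'$, so $H^1(\mathrm{Ind}_{G'}^G X_0) \cong \mathrm{Ind}_{G'}^G H^1(X_0)$ as $G$-representations. Since $A \subseteq G'$ and $G = AH$, we have $G = G'H$, so $H$ acts transitively on $G/G'$ with stabilizer $H'$. Mackey's decomposition then gives $\mathrm{Res}_H \mathrm{Ind}_{G'}^G V \cong \mathrm{Ind}_{H'}^H V$ for any $G'$-module $V$, and Frobenius reciprocity yields $(\mathrm{Ind}_{H'}^H V)^H \cong V^{H'}$, completing the chain up to identifying its terms with $\bigoplus_i (M_i^H)^{m_i}$.

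To compute $\tilde m_i$, I apply Lemma \ref{L:H1Character} to the (possibly disconnected) $G$-regular cover $\mathrm{Ind}_{G'}^G X_0 \to \bP^1$ with monodromy $\sigma'$ generating $G'$, obtaining
\[ \chi_{H^1(\mathrm{Ind}_{G'}^G X_0)} = (\ell-2)\chi_{reg, G} + 2 \chi_{G'}^G - \sum_{k=1}^\ell \chi_{a_k}^G. \]
For nontrivial $M_i$, $\Phi_i$ avoids the trivial character, so $M_i^A = 0$ and hence $M_i^{G'} = 0$, giving $\tilde m_i = (\ell-2)\dim M_i - \sum_k \dim \mathrm{Fix}_{a_k}(M_i)$. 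Comparison with the formula for $m_i$ from Equation \ref{E:Multiplicity} reduces $\tilde m_i = m_i$ to the per-$M_i$ identity
\[ (n-\ell)\dim M_i = \sum_j \dim \mathrm{Fix}_{g_j}(M_i) - \sum_k \dim \mathrm{Fix}_{a_k}(M_i). \]
The aggregate version (summed over all $i$) follows at once from comparing Riemann--Hurwitz for $Y$ and $Y_0$, both having genus $g_Y$. For the refinement to each $M_i$, I would use specialization: degenerate a generic smooth $X$ in the Hurwitz space of $G$-regular covers with monodromy $\sigma$ to an admissible $G$-regular cover $\hat X_0$ whose main $G$-orbit of components is $\mathrm{Ind}_{G'}^G X_0$, with the remaining components covering bubble $\bP^1$'s that resolve the combinations in $\sigma \to \sigma'$. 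By flatness of the family, $H^1(X) \cong H^1(\hat X_0)$ as $G$-representations; a Mayer--Vietoris decomposition of $H^1(\hat X_0)$ then splits it into contributions from $\mathrm{Ind}_{G'}^G X_0$, the bubble covers, and the dual graph of $\hat X_0$. Applying Lemma \ref{L:H1Character} to each bubble cover and computing the dual-graph character via $\chi_{H^1(\Gamma)} = \chi_{C^1(\Gamma)} - \chi_{C^0(\Gamma)} + \mathbf{1}$, together with the consequence of genus preservation that each bubble is rational, should force the non-main contributions to vanish on every nontrivial $M_i$.

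For the final identification, since $V_i = \bigoplus_{\chi \in \Phi_i} W_\chi$ consists of $A$-eigenspaces whose characters lie in $\Phi_i$ and $\mathrm{Res}_A M_j$ is supported on $\Phi_j$, we have $\mathrm{Hom}_A(M_j, V_i) = 0$ for $j \neq i$ and a fortiori $\mathrm{Hom}_{G'}(M_j, V_i) = 0$. Frobenius reciprocity then gives $\langle M_j, \mathrm{Ind}_{G'}^G V_i \rangle_G = 0$ for $j \neq i$, so $\mathrm{Ind}_{G'}^G V_i$ is $M_i$-isotypic, and by dimension count has multiplicity $m_i$. Under the Mackey/Frobenius isomorphism from the first part, $V_i^{H'}$ corresponds to the $H$-invariants of this isotypic piece, namely $(M_i^H)^{m_i}$, as required.
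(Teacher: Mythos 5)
Your first and third steps are sound and amount to the same content as the paper's argument, just phrased through Mackey/Frobenius instead of the covering maps $p_1, p_2$ and the restriction $\iota^*$; one small correction is that $\mathrm{Ind}_{G'}^G V_i$ is not $M_i$-isotypic in general (it can contain irreducibles of $G$ that do not occur in $\bC[A]$), but this does not hurt you because an irreducible $G$-representation has nonzero $H$-invariants if and only if it occurs in $\bC[G/H] = \bC[A]$, i.e.\ is one of the $M_j$. The genuine gap is exactly where you predicted it: the per-$M_i$ identity $(n-\ell)\dim M_i = \sum_j \dim \mathrm{Fix}_{g_j}(M_i) - \sum_k \dim \mathrm{Fix}_{a_k}(M_i)$. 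The degeneration argument you sketch for it fails at its first step: for a nodal special fiber $\hat X_0$, flatness does not give $H^1(X) \cong H^1(\hat X_0)$ as $G$-representations; one has $\dim H^1(\hat X_0) = \sum_c 2g_c + b_1(\Gamma)$ versus $\dim H^1(X) = 2\sum_c g_c + 2b_1(\Gamma)$, and the dual graph $\Gamma$ of the \emph{cover} (unlike that of the base, which is a tree) typically has loops. Moreover, genus preservation only forces the non-main components of the quotient curve $Y$ to be rational; the components of the $G$-cover lying over the bubbles are branched covers of those rational curves and can have positive genus, and their $H^1$, as well as the graph term, generally do contain nontrivial $M_i$. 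So the ``non-main contributions'' do not vanish; at best they cancel, and your sketch asserts this rather than proves it.

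The paper's proof of this step is much simpler, and you already had both ingredients in hand. Reduce to a single simple merge, replacing $\sigma_1 = (h_1, \dots, h_m)$ by $\sigma_2 = (h_1, \dots, h_{m-2}, h_{m-1}h_m)$, with $A$ contained in the group generated by $\sigma_2$. By Lemma \ref{L:H1Character} the multiplicities satisfy $m_{j,1} - m_{j,2} = \dim M_j + \dim \mathrm{Fix}_{h_{m-1}h_m}(M_j) - \dim \mathrm{Fix}_{h_{m-1}}(M_j) - \dim \mathrm{Fix}_{h_m}(M_j) \geq 0$, because $\mathrm{Fix}_{h_{m-1}h_m}(M_j) \supseteq \mathrm{Fix}_{h_{m-1}}(M_j) \cap \mathrm{Fix}_{h_m}(M_j)$ and $\dim V_1 + \dim V_2 - \dim(V_1 \cap V_2) \leq \dim M_j$ for subspaces $V_1, V_2 \subseteq M_j$. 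On the other hand, since $A$ is abelian each $n_j = 1$, so Equation \ref{E:Genus} (i.e.\ Riemann--Hurwitz, as in Proposition \ref{P:CohomologyDecomposition}) gives $\sum_j m_{j,1} = 2g_Y = \sum_j m_{j,2}$, the genus being preserved along the merge. Termwise inequality together with equal totals forces $m_{j,1} = m_{j,2}$ for every $j$, which is precisely the refinement of your aggregate Riemann--Hurwitz observation that your proposal was missing.
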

\begin{proof}
We will first prove the claim about the multiplicity of $M_i$. The case where $\sigma = \sigma'$ has already been handled by Proposition \ref{P:CohomologyDecomposition}. So it suffices to show that if the claim holds for $Y_1 \in \mathrm{Hur}_{\sigma_1}$ where $\sigma_1 = (h_1, \hdots, h_m)$ then it also holds for $Y_2 \in \mathrm{Hur}_{\sigma_2}$ where $\sigma_2 = (h_1, \hdots, h_{m-2}, h_{m-1} h_m)$ provided that $A$ is contained in the group generated by $\sigma_2$.  Let $G_i$ be the group generated by $\sigma_i$ and let $X_i$ be Galois closure of $Y_i$. 

% We have already seen that this is the case if $Y$ belongs to $\mathrm{Hur}_\sigma$. Suppose that $\sigma = (g_1, \hdots, g_n)$ is the monodromy around points $p_1, \hdots, p_n$ on $\bP^1$ for a point in $\mathrm{Hur}_\sigma$. If the final two points collide, then the monodromy becomes $\sigma' = (g_1, \hdots, g_{n-2}, g_{n-1} g_n) =: (g_1', \hdots, g_{n-1}', g_n')$. Moreover, we may pass to any boundary point in $\cB_\sigma$ through a sequence of these operations. Therefore, it suffices to show that if we take the $G$-cover $X$ of $\bP^1$ determined by $\sigma'$ that $H^1(Y)$ has the prescribed form. 

If $M_j$ is not the trivial representation then
\[ \langle \chi_{G_i}, M_j \rangle_G = \langle \chi_{triv}, \mathrm{Res}_{G_i} M_j \rangle_{G_i} = 0 \] 
where the first equality is by Frobenius reciprocity and the second is since $M_j$ contains no trivial $A$-representation, let alone $G_i$-representation, if $M_j$ is not trivial. By Lemma \ref{L:H1Character}, when $M_j$ is nontrivial, the multiplicity with which it occurs in $H^1(\mathrm{Ind}_{G_i}^G(X_i))$ is $m_{j,i} := (|\sigma_i|-2) \dim M_j - \sum_{g \in \sigma_i} \langle M_j, \chi_g \rangle_G$. Notice that
\[ m_{j,1} - m_{j,2} = \dim M_j + \dim \mathrm{Fix}_{h_{m-1}h_m} - \dim \mathrm{Fix}_{h_m} - \dim \mathrm{Fix}_{h_{m-1}} \]
Since $\mathrm{Fix}_{h_{m-1}h_m}$ contains $\mathrm{Fix}_{h_{m-1}}\cap \mathrm{Fix}_{h_m} $ we have
\[ m_{j,1} - m_{j,2} \geq \dim M_j + \dim \left( \mathrm{Fix}_{h_{m-1}}\cap \mathrm{Fix}_{h_m} \right) - \dim \mathrm{Fix}_{h_m} - \dim \mathrm{Fix}_{h_{m-1}}\]
Therefore, we have that $m_{j,1} \geq m_{j,2}$ since, if $V_1$ and $V_2$ are subspaces of a vector space $V$ we have that $\dim V \geq \dim V_1 + \dim V_2 - \dim V_1 \cap V_2$. 

Recall that, since $A$ is abelian, $M_j$ has multiplicity one in $\bC[A]$. So by Equation \ref{E:Genus}, we have
\[ 2g_Y - 2 = -2 + \sum_j m_{j,2}  = -2 + \sum_j m_{j,1}. \]
Since $m_{j,1} \geq m_{j,2}$ this equality implies that $m_{j,1} = m_{j,2}$ as desired.
%
% unless $M_0$ is the trivial representation. We have
% \[ \chi_{H^1(X)} = (n-3)\chi_{reg} + 2\chi_{G'} - \sum_{i=1}^{n-1} \chi_{g_i'} \]
% where $\chi_{G'}$ is the character of $\bC[G/G']$. When $M_i$ is not trivial we have that its multiplicity is $m_i' := (n-3) \dim M_j - \sum_{i=1}^n \langle M_j, \chi_{g_i} \rangle_G$ and that 
% \[ 2g_Y - 2 = -2 + \sum_j m_j' n_j = -2 + \sum_j m_j n_j \]
% Notice that
% \[ m_j - m_j' = \dim M_j + \dim \mathrm{Fix}_{g_{n-1}g_n} - \dim \mathrm{Fix}_{g_n} - \dim \mathrm{Fix}_{g_{n-1}} \]
% Since $\mathrm{Fix}_{g_{n-1}g_n}$ contains $\mathrm{Fix}_{g_{n-1}}\cap \mathrm{Fix}_{g_n} $ we have
% \[ m_j - m_j' \geq \dim M_j + \dim \left( \mathrm{Fix}_{g_{n-1}}\cap \mathrm{Fix}_{g_n} \right) - \dim \mathrm{Fix}_{g_n} - \dim \mathrm{Fix}_{g_{n-1}}\]
% Therefore, we have that $m_j \geq m_j'$ since, if $V_1$ and $V_2$ are subspaces of a vector space $V$ we have that $\dim V \geq \dim V_1 + \dim V_2 - \dim V_1 \cap V_2$. 
% In particular, we see that we have that $m_j = m_j'$ for all $j$ as desired.
%Since the operation of colliding two branch points can take us however deep into the boundary that we like, we see that for every surface in $\cB_\sigma$ that $H^1(Y) = \bigoplus_i (M_i^H)^{m_i}$.

We now turn to the second claim. Let $\iota: X_0 \ra \mathrm{Ind}_{G'}^G(X_0)$ be the identification of $X_0$ with $\{\mathrm{id} \} \times X_0 \subseteq \mathrm{Ind}_{G'}^G(X_0)$. Let $p_1: X_0 \ra Y_0$ and $p_2: \mathrm{Ind}_{G'}^G(X_0) \ra Y_0$ be the covering maps. Note that $p_1^*$ and $p_2^*$ induce isomorphisms between $H^1(Y_0)$ and $H^1(X_0)^{H'}$ and $H^1(\mathrm{Ind}_{G'}^G(X_0))^H$ respectively. Since $p_1 = p_2 \circ \iota$, the restriction map $\iota^*$ induces an isomorphism from $H^1(\mathrm{Ind}_{G'}^G(X_0))^H$ to $H^1(X_0)^{H'}$. 
Let $W_i$ be the span of the elements of $H^1(\mathrm{Ind}_{G'}^G(X_0))$ that are $A$-eigenforms with character in $\Phi_i$. Then $W_i \cong M_i^{m_i} \oplus V$ where $V$ is a sum of irreducible $G$-representations that do not have $H$-invariants. Since $\iota^*(W_i) \subseteq V_i$, $\iota^*$ sends $(M_i^H)^{m_i} \cong W_i^H$ isomorphically onto its image in $V_i^{H'}$. Since this holds for all $i$ the image must coincide with $V_i^{H'}$ as desired.
\end{proof} 

%If $(f: Y \ra \bP^1)$ belongs to $\mathrm{Hur}_\sigma$, then the claim is simply the definition of the fiber of $\cM_i$. So, in light of Lemma \ref{L:DecompositionOfH1X}, it is completely natural to also expect the claim to hold when $f$ belongs to the boundary of $\cB_\sigma$, which is what we will show.

\begin{cor}\label{C:BoundaryFibers}
Let $(f: Y \ra \bP^1) \in \cB_\sigma$ whose monodromy generates a subgroup $G'$ containing $A$. Let $Y_0$ be the unique component of genus $g_Y$ and $X_0$  its Galois closure. Suppose that $V_i$ is the span of the $A$-eigenforms in $H^1(X_0)$ with character in $\Phi_i$. Then the fiber of $\cM_i$ over $(f: Y \ra \bP^1)$ can be identified with the holomorphic $1$-forms in $(V_i^{H\cap G'})^{m_i} \subseteq H^1(Y_0)$.
\end{cor}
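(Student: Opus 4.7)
The plan is to identify the fiber of $\cM_i$ at the boundary point by combining three ingredients: the flat extension of $\cV_i$ to $\cB_\sigma$ from Lemma \ref{L:BoundaryConstruction}, the description of the $G$-regular Galois closure of an admissible cover from Proposition \ref{P:StratumCoordinates}, and the cohomological identification in Lemma \ref{L:DecompositionOfH1X}.

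First, I would recall that the flat bundle $H^1_{\mathrm{Hur}_\sigma, rel}$ extends flatly over $\cB_\sigma$ precisely because special branch points do not collide there, so $H^1(Y, \Sigma_{sp}; \bC)$ remains well-defined at the boundary and varies under Gauss-Manin transport. Thus $\cV_i$ extends flatly, and $\cM_i$ is its intersection with the Hodge bundle of stable $1$-forms. Since $Y$ has a unique non-rational component $Y_0$, every stable holomorphic $1$-form on $Y$ is supported on $Y_0$; in particular, the fiber of $\cM_i$ sits inside the absolute cohomology $H^1(Y_0;\bC)$ rather than the relative part contributing the $\rho_i$ summand.

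Next, I would identify the $G$-regular Galois closure $X$ of $Y \to \bP^1$ using Definition \ref{D:ConstructionGluingMap} and Proposition \ref{P:StratumCoordinates}. If $\bP_0$ denotes the component of the nodal base containing the image of $Y_0$, then by the gluing construction the piece of $X$ over $\bP_0$ is precisely $\mathrm{Ind}_{G'}^G(X_0)$, while the remaining components are rational and contribute nothing to holomorphic forms. Consequently, the limit at the boundary of the flat subbundle $(M_i^H)^{m_i}\subseteq H^1_{\mathrm{Hur}_\sigma}$ is the $H$-invariant $M_i$-isotypic piece of $H^1(\mathrm{Ind}_{G'}^G(X_0);\bC)$, and the holomorphic part of this limit is what the fiber of $\cM_i$ at $(f:Y\to\bP^1)$ computes.

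Finally, I would invoke Lemma \ref{L:DecompositionOfH1X}, which gives the chain of identifications
\[ H^1(Y_0;\bC) \cong H^1(X_0;\bC)^{H\cap G'} \cong H^1(\mathrm{Ind}_{G'}^G(X_0);\bC)^H, \]
and, crucially, identifies the summand $(M_i^H)^{m_i}$ inside $H^1(Y_0;\bC)$ with $V_i^{H\cap G'}$. Combining this with Step~1 yields that the fiber of $\cM_i$ is exactly the holomorphic $1$-forms in $V_i^{H\cap G'}\subseteq H^1(Y_0;\bC)$, as claimed.

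The main obstacle is Step~2: verifying that the Gauss-Manin parallel transport of the isotypic subbundle $\cV_i$ really does limit to the corresponding $G$-equivariant piece of the cohomology of the admissible cover $\mathrm{Ind}_{G'}^G(X_0)$. Intuitively this is a continuity-of-cohomology statement for Hurwitz families degenerating to admissible covers, and it is compatible with how $H^1$ is tracked in the gluing description of Section \ref{S:BoundaryOfAdmissibleCovers}; once this compatibility is pinned down, the remainder of the argument is a direct consequence of Lemma \ref{L:DecompositionOfH1X}.
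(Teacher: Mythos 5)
There is a genuine gap, and it sits exactly where you yourself flagged it. The assertion in your Step 2 that, in the admissible Galois closure $X$ of $Y \ra \bP^1$, ``the remaining components are rational and contribute nothing to holomorphic forms'' is false in general. Membership in $\cB_\sigma$ forces the components of $Y = X/H$ other than $Y_0$ to be rational trees (so your Step 1 claim about stable forms on $Y$ is fine), but the components of the $G$-cover $X$ lying over the other vertices of the base tree are $G_v$-regular covers of $\bP^1$ and can perfectly well have positive genus, and the dual graph of $X$ can have cycles. Hence the space of stable forms $H^0(\Omega_X)$ is in general much larger than $H^{1,0}(\mathrm{Ind}_{G'}^G(X_0))$, and a priori it could contain additional $M_i$-isotypic classes. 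Because of this, the ``continuity of cohomology'' statement you defer in your last paragraph --- that the Gauss--Manin limit of $\cV_i^{1,0}$ is exactly the $H$-invariant $M_i$-isotypic holomorphic piece of $H^1(\mathrm{Ind}_{G'}^G(X_0))$ --- is not a routine compatibility to be ``pinned down''; it is the actual content of the corollary, and your proposal gives no argument for it.

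The paper closes precisely this gap in two moves. First, it invokes the local constancy of the $G$-module structure of the fibers of $\Omega\cA^G$ (Bertin--Romagny; Delecroix--Rueth--Wright) to identify the fiber of $\cM_i$ at the boundary point with the $H$-invariants of the $M_i$-isotypic component of $H^0(\Omega_X)$, where $X$ is the full admissible $G$-cover. Second, it performs a multiplicity count: by Proposition \ref{P:StratumCoordinates}, extension by zero embeds $H^{1,0}(\mathrm{Ind}_{G'}^G(X_0))$ $G$-equivariantly into $H^0(\Omega_X)$, so the multiplicities satisfy $k_i \le k_i'$, while $\sum_i k_i = \sum_i k_i' = g_Y$ (using Lemma \ref{L:DecompositionOfH1X} on one side, and local constancy together with Proposition \ref{P:CohomologyDecomposition} and $\dim M_i^H = 1$ on the other); this forces $k_i = k_i'$ and rules out any extra isotypic contribution from the other components or nodes of $X$. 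Only then does Lemma \ref{L:DecompositionOfH1X} finish the identification with the holomorphic forms in $(V_i^{H\cap G'})^{m_i} \subseteq H^1(Y_0)$, which is the step you and the paper share. Without the local-constancy input and the dimension count (or an equivalent argument), your chain of identifications does not go through.
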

\begin{proof}
Let $(\wt{f}: X \ra \bP^1)$ be any point in $\cA^G$ that maps to $f$ under $\mathrm{Mod}_H$. If $U$ is a connected neighborhood of $\wt{f}$, then the multiplicities of the irreducible $G$-representations that occur in the fibers of $\Omega \cA^G \restriction_U$ are independent of the fiber chosen (see Bertin-Romagny \cite[Propositions 3.9, 4.3, and 5.2]{BertinRomagny} or Delecroix-Rueth-Wright \cite[Proof of Therem 2.1 (1)]{DRW}. The fiber of $\cM_i$ over $(f: Y \ra \bP^1)$ can be identified with the $H$-invariants of the $M_i$-isotypic component of $H^0(\Omega_X)$. By Proposition \ref{P:StratumCoordinates}, $\mathrm{Ind}_{G'}^G(X_0)$ can be $G$-equivariantly identified with a union of irreducible components of $X$ and so we can identify $H^{1,0}(\mathrm{Ind}_{G'}^G(X_0))$ $G$-equivariantly with a subspace of $H^0(\Omega_X)$ by extending holomorphic $1$-forms by zero to other components. Let $k_i$ (resp. $k_i'$) be the multiplicity of $M_i$ in $H^{1,0}(\mathrm{Ind}_{G'}^G(X_0))$ (resp. $H^0(\Omega_X)$. By Lemma \ref{L:DecompositionOfH1X}, it suffices to show that $k_i = k_i'$ for all $i$. We have that $k_i \leq k_i'$, for all $i$, and that $\sum_i k_i' = g_Y$. By Lemma \ref{L:DecompositionOfH1X}, $\sum_i k_i = g_Y$, so we are done.
%
% Suppose that $G'$ is the subgroup of $G$ generated by the monodromy of the restriction of $f$ to $Y_0$. By Lemma \ref{L:DecompositionOfH1X}, we must show that the $M_i$-isotypic component of $H^{1,0}(\mathrm{Ind}_{G'}^G(X_0))$ can be identified with a subset of the $M_i$-isotypic component of $H^0(\Omega_X)$. By Definition \ref{D:ConstructionGluingMap}, since $\mathrm{Ind}_{G'}^G(X_0)$ can be identified with a union of irreducible components of $X$, the desired map is the one that sends a holomorphic $1$-form on $\mathrm{Ind}_{G'}^G(X_0)$ to its extension by $0$ to all other components.
\end{proof}

\begin{proof}[Proof of Lemma \ref{L:StrataNonemptiness}:]

If $T$ is the tree produced by Lemma \ref{L:TreeFromMerge}, then there is an element of $\cA_\sigma(T)$ so that $Y_0$ is its unique genus $g_Y$ component. By the assumptions and Corollary \ref{C:BoundaryFibers}, the fiber of $\cM_i$ over $Y_0$ intersects $\cH$ and hence contains a point in $\cM$. 
\end{proof}
 
\subsection{Proof of Lemma \ref{L:FiniteFibers}}

We will now follow ideas of Ellenberg \cite[Section 4]{Ellenberg-Endo}. Suppose that $f: Y_0 \ra \bP^1 \in \mathrm{Hur}_{\sigma'}$ where $\sigma'$ generates a subgroup $G'$ of $G$ that contains $A$.  Let $X_0$ be its Galois closure. Let $H' := G' \cap H$. Set $X := \mathrm{Ind}_{G'}^G(X_0)$ and let $\phi: X \ra Y_0$ be the covering map. Given an element $g \in G$, let $\wt{D}_g$ be the graph of $g$ in $X \times X$. This maps, under $\phi \times \phi$, to a set $D_{HgH}$ in $Y_0 \times Y_0$ whose points $(y_1, y_2)$ are those so that $y_2 \in \phi(g \cdot \phi^{-1}(y_1))$. As the subscript indicates, $D_{HgH}$ only depends on the double coset to which $g$ belongs. %Note that $f: Y_0 \ra \bP^1$ is determined by the following set,
%\[ D := \{ (y_1, y_2) \in Y_0 \times Y_0 : f(y_1) = f(y_2) \} = \bigcup_{\gamma \in H \backslash G/H} D_\gamma . \]
%The preimage of $D_{HgH}$ in $X_0 \times X_0$ is $\bigcup_{g' \in HgH} \wt{D}_{g'}$. The result that we will apply is the following. 

\begin{thm}[Ellenberg \cite{Ellenberg-Endo} Theorem 4.1]\label{T:FiniteFibersCriterion}
Suppose that $g_Y > 1$ and that $G = G'$. If $\gamma \in H \backslash G / H$ contains $|H|^2$ elements and $D_\gamma \cdot D_\gamma < 0$ then the forgetful map $\mathrm{Hur}_{\sigma'} \ra \cM_{g_Y}$ that sends a cover of $\bP^1$ to its (unmarked) domain has finite fibers.\footnote{Note that Ellenberg's result is stated differently. In particular, our assumptions ``$g_Y > 1$ and $D_\gamma \cdot D_\gamma < 0$" replace Ellenberg's Equation 4.4, which is justified since in Ellenberg's proof, Equation 4.4 is only used to deduce those two assumptions. The condition that $G = G'$ appears since Ellenberg does not consider disconnected covers.}
\end{thm}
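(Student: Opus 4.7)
The plan is to follow the Hecke-correspondence strategy of Ellenberg. For any $(f: Y_0 \to \bP^1) \in \mathrm{Hur}_{\sigma'}$, consider the correspondence $D_\gamma \subseteq Y_0 \times Y_0$ associated to the double coset $\gamma = HgH$, as constructed before the statement. The hypothesis $|\gamma| = |H|^2$ is equivalent to $H \cap gHg^{-1} = \{e\}$, which guarantees that $D_\gamma$ is a reduced correspondence of bidegree $(|H|,|H|)$ on $Y_0 \times Y_0$ (the first projection has degree $|H|/|H \cap g^{-1}Hg| = |H|$). Let $\cF$ denote a connected component of a fiber of the forgetful map $\mathrm{Hur}_{\sigma'} \to \cM_{g_Y}$ over a fixed $Y_0$; the goal is to show that $\cF$ is zero-dimensional.

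The first step is to observe that as $f$ varies in $\cF$, the correspondences $D_\gamma(f)$ form a flat algebraic family of effective divisors on the fixed surface $Y_0 \times Y_0$, and their numerical classes $[D_\gamma(f)] \in \mathrm{NS}(Y_0 \times Y_0)$ are therefore constant. The second step is to invoke the standard rigidity principle that an effective divisor $D$ on a smooth projective surface with $D \cdot D < 0$ is rigid among effective divisors numerically equivalent to itself: in any flat family $\{D_t\}_{t \in T}$ with $[D_t]$ constant, the constancy $D_{t_1} \cdot D_{t_2} = D_0^2 < 0$ forces any two members to share an irreducible component, and a direct intersection-theoretic bookkeeping on the common and moving parts (using that $D \cdot H > 0$ for any ample $H$ rules out nonzero effective numerically trivial divisors) shows that the family is locally constant. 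Applied to $D_\gamma$, this forces $D_\gamma(f)$ to be the \emph{same} divisor for every $f \in \cF$.

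The final and most delicate step is to show that a fixed pair $(Y_0, D_\gamma)$ can correspond to only finitely many $f \in \cF$. The idea is that $D_\gamma$ records, as a closed subscheme of $Y_0 \times Y_0$, the way the element $g \in G$ permutes the sheets of the cover $Y_0 \to \bP^1$; combined with the other correspondences $D_{\gamma'}$, which are also rigid and hence constant on $\cF$, the full collection $\{D_{\gamma'}\}_{\gamma' \in H \backslash G/H}$ algebraically realizes the action of $G$ on $G/H$ as finite correspondences on $Y_0$. Using $G = G'$ (so that the Galois closure $X_0$ is connected), this data reconstructs $X_0$ and the map $f$ up to finitely many choices, and the hypothesis $g_Y > 1$ ensures that $\mathrm{Aut}(Y_0)$ is finite, so the reconstruction is finite-to-one. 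The main obstacle will be precisely this final reconstruction: making rigorous that the correspondence $D_\gamma$ together with $(Y_0, f)$ being a Hurwitz cover is enough geometric data to pin down $f$ up to finite ambiguity requires careful moduli-theoretic bookkeeping, and is the technical heart of Ellenberg's original argument.
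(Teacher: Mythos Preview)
The paper does not provide its own proof of this theorem; it is simply cited from Ellenberg \cite{Ellenberg-Endo}, with a footnote explaining how the hypotheses here correspond to those in Ellenberg's original statement. So there is no ``paper's proof'' to compare against, and your proposal is effectively a sketch of Ellenberg's argument itself.

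Your outline is essentially correct and follows Ellenberg's strategy, but two points can be sharpened. First, the rigidity step is easier than you make it: since $G=G'$, the Galois closure $X_0$ is connected, so $\wt D_g\cong X_0$ is irreducible, and because $|HgH|=|H|^2$ the map $\wt D_g\to D_\gamma$ is birational; hence $D_\gamma$ is an \emph{irreducible} curve with negative self-intersection, and such a curve is the unique effective curve in its numerical class. You do not need the general bookkeeping argument for reducible divisors. Second, the reconstruction step does not require the whole collection $\{D_{\gamma'}\}$: already the single $D_\gamma$ (for $\gamma$ with $|HgH|=|H|^2$) has normalization $X_0$, with the two projections to $Y_0$ realizing the quotients by $H$ and $g^{-1}Hg$. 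Since $g_Y>1$ forces $g_{X_0}>1$ and hence $\mathrm{Aut}(X_0)$ finite, there are only finitely many embeddings $G\hookrightarrow\mathrm{Aut}(X_0)$ compatible with this data, and each determines $f$ up to a M\"obius transformation on the target. This is exactly how Ellenberg concludes, and it removes the vagueness you flagged in your final paragraph.
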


\begin{cor}\label{C:FiniteFibersCriterion}
Theorem \ref{T:FiniteFibersCriterion} holds even if $G \ne G'$.    
\end{cor}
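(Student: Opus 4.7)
The plan is to reduce to Theorem \ref{T:FiniteFibersCriterion} applied to the triple $(G', H', \sigma')$, with $H' := G' \cap H$: the map $\mathrm{For}_2 \colon \Hur_{\sigma'} \to \cM_{g_Y}$ depends only on $(G', \sigma')$, and the Galois closure $X_0$ of a cover in $\Hur_{\sigma'}$ is a connected $G'$-regular cover. It suffices to produce a double coset $\gamma' \in H' \backslash G' / H'$ with $|\gamma'| = |H'|^2$ and $D_{\gamma'} \cdot D_{\gamma'} < 0$.

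First I observe that $G = HG'$: the identification $H^1(Y_0) \cong H^1(X)^H$ in Lemma \ref{L:DecompositionOfH1X} forces $H$ to act transitively on the components $G/G'$ of $X$. Hence every double coset $HgH$ meets $G'$, and I may take $g \in G'$. Then $|H'gH'| = |H'|^2$ follows from $H' \cap gH'g^{-1} \subseteq H \cap gHg^{-1} = \{1\}$, the last equality being the hypothesis $|HgH| = |H|^2$. The key computation is to decompose $D_\gamma$ as a divisor on $Y_0 \times Y_0$: tracing the fiber of $D_\gamma$ over a generic $y_1$ with lift $\wt{y}_1 \in X_0 \subseteq X$, and using $G = HG'$ to move elements $(gh, \wt{y}_1) \in X$ back into the $X_0$-component, the fiber becomes $\{\phi(g_1 \wt{y}_1) : g_1 \in G' \cap HgH\}$. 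Partitioning $G' \cap HgH$ into $H' \times H'$-orbits yields $D_\gamma = \sum_i D_{\gamma_i}$ as effective divisors, with $\gamma_i$ ranging over the double cosets of $H' \backslash G'/H'$ contained in $G' \cap HgH$, each of size $|H'|^2$ by the previous argument.

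To finish, the $D_{\gamma_i}$ are pairwise distinct irreducible effective divisors on the surface $Y_0 \times Y_0$, so $D_{\gamma_i} \cdot D_{\gamma_j} \geq 0$ for $i \neq j$. Hence $D_\gamma \cdot D_\gamma \geq \sum_i D_{\gamma_i} \cdot D_{\gamma_i}$, and the hypothesis $D_\gamma \cdot D_\gamma < 0$ forces $D_{\gamma_i} \cdot D_{\gamma_i} < 0$ for some $i$; Theorem \ref{T:FiniteFibersCriterion} applied to $(G', H', \sigma')$ with this $\gamma_i$ then yields the corollary. The main technical obstacle is establishing the decomposition $D_\gamma = \sum_i D_{\gamma_i}$ at the level of divisors with multiplicity one (not merely as sets), which reduces to checking that generic fibers of $D_\gamma$ are reduced — a consequence of the Hecke condition $|HgH| = |H|^2$ ensuring the map $g_1 \mapsto \phi(g_1 \wt{y}_1)$ is injective on $H' \backslash (G' \cap HgH)$.
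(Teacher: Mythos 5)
Your proposal is correct and takes essentially the same route as the paper's proof: using $G = HG'$ (which holds simply because $A \subseteq G'$ and $G = HA$) to decompose the curve $D_\gamma$ in $Y_0 \times Y_0$ into the curves coming from graphs in $X_0 \times X_0$ indexed by $H'$-double cosets inside $\gamma \cap G'$, then invoking non-negativity of intersections of distinct irreducible curves to find one component with negative self-intersection and $|H'|^2$ elements, and applying Theorem \ref{T:FiniteFibersCriterion} to $(G', H', \sigma')$ --- exactly the paper's argument, carried out there with $g = a \in A$ and the conjugates $h^{-1}ah$. Your worry about multiplicities is moot, since $D_\gamma$ is by definition the (reduced) image curve, so its class is automatically the sum of the classes of its distinct irreducible components.
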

\begin{proof}
For $a \in A$, let $\wt{\Delta_a}$ be the graph of $a$ in $X_0 \times X_0$ and let $\Delta_{H'aH'}$ be its image in $Y_0 \times Y_0$. By Theorem \ref{T:FiniteFibersCriterion}, it suffices to show that $\Delta_{H'aH'}$ has negative self-intersection number for some $a$ so that $|H'aH'| = |H'|^2$. By assumption, there is some $a$ so that $|HaH| = |H|^2$ and so $D_{HaH}$ has negative self-intersection number. Since distinct curves have non-negative intersection number, it suffices to show that $D_{HaH} = \bigcup_{a' \sim a} \Delta_{H'a'H'}$ where $a' \sim a$ means that $a'$ is conjugate in $G$ to $a$. Notice that $\wt{D_a} = \bigcup_{h \in H} \mathrm{im}\left(\Phi_{a,h} \right)$ where $\Phi_{a,h}: X_0 \ra X \times X$ is the function given by $\Phi_h(x) = (h \cdot x,ah \cdot x)$. Notice that $(\phi \times \phi)\left( \mathrm{im}(\Phi_{a,h}) \right) = (\phi \times \phi)\left(  \wt{\Delta_{h^{-1}ah}}) \right)$ for any $h \in H$. Therefore, 
\[ D_{HaH} = (\phi \times \phi)(\wt{D_a}) = (\phi \times \phi)\left( \bigcup_{h \in H} \Phi_{a,h} \right) = (\phi \times \phi)\left( \bigcup_{h \in H} \wt{\Delta_{h^{-1}ah}} \right) = \bigcup_{a' \sim a} \Delta_{H'aH'}.  \]
\end{proof}

The next lemma is inspired by Ellenberg \cite[Proposition 4.1]{Ellenberg-Endo}.

% Notice that if $(y_1, y_2) \in D_{HgH}$, then $f(y_1) = f(y_2)$ where $f: Y \ra \bP^1$ is the branched cover of the sphere.

\begin{lem}\label{L:IntersectionNumber}
If $\gamma = HaH$ for some $a \in A$, then,
\[ D_\gamma \cdot D_\gamma = \frac{|H|^2}{|H \cap a^{-1}Ha|^2}\left( 2 - \sum_i m_i \left| \frac{1}{|\Phi_i|} \sum_{\chi \in \Phi_i} \chi(a)\right|^2 \right)\]
\end{lem}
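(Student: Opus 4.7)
The plan is to reduce the self-intersection computation to a trace on $H^1(Y_0)$ via the Künneth decomposition of $[D_\gamma] \in H^2(Y_0 \times Y_0)$, and then to identify that trace using the eigenvalues computed in Lemma \ref{L:RMConstants}. First, I would determine the bidegree $(d_1, d_2)$ of the correspondence $D_\gamma \subset Y_0 \times Y_0$. For a generic $y_1$, the fiber $\phi^{-1}(y_1)$ is a free $H$-orbit of size $|H|$, and the set of its partners in $D_\gamma$ is $\phi(HaH \cdot x_1)$ for any lift $x_1 \in \phi^{-1}(y_1)$. Since $|HaH| = |H|^2/|H \cap a^{-1}Ha|$ and $\phi$ is an $H$-quotient, one obtains $d_1 = d_2 = |H|/|H \cap a^{-1}Ha|$.

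Next, I would invoke the general Hodge-theoretic intersection formula: decomposing $[D_\gamma]$ into its Künneth components and pairing it with itself via Poincar\'e duality gives
\[ D_\gamma \cdot D_\gamma \;=\; 2\, d_1 d_2 \;-\; \mathrm{tr}\!\left(T_{D_\gamma} \circ T_{D_\gamma^t} \,\Big|\, H^1(Y_0) \right), \]
where $T_D := (p_2)_* \circ p_1^*$ is the Hecke-type endomorphism of $H^1(Y_0)$ associated to a correspondence $D \subset Y_0 \times Y_0$, and $D^t$ is its transpose. A sanity check with $D = \Delta$ recovers $\Delta \cdot \Delta = \chi(Y_0) = 2 - 2g_Y$, since $T_\Delta = \mathrm{id}$ and $\mathrm{tr}(\mathrm{id}|H^1) = 2g_Y$.

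The third step is to identify $T_{D_\gamma}$ explicitly on $H^1(Y_0) \cong H^1(X)^H$ (the last isomorphism being provided by Lemma \ref{L:DecompositionOfH1X}). Using the standard dictionary between correspondences on a Galois quotient $X \to X/H = Y_0$ and double-coset operators, one has
\[ T_{D_\gamma} \;=\; \frac{|H|}{|H \cap a^{-1}Ha|}\; \pi_H\, a^*\, \pi_H, \]
where the prefactor arises from the fact that the single left coset $Ha \subset HaH$ meets each right coset in $HaH/H$ in exactly $|H \cap a^{-1}Ha|$ elements. Similarly, $T_{D_\gamma^t}$ is the same normalization applied to $a^{-1}$. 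By Lemma \ref{L:RMConstants}, $\pi_H a^* \pi_H$ acts on the summand $(M_i^H)^{m_i}$ as multiplication by $\lambda_i := \tfrac{1}{|\Phi_i|} \sum_{\chi \in \Phi_i} \chi(a)$, and $\pi_H (a^{-1})^* \pi_H$ by $\overline{\lambda_i}$ (since $\chi(a^{-1}) = \overline{\chi(a)}$ for characters of the abelian group $A$). Summing over the decomposition in Proposition \ref{P:CohomologyDecomposition} yields
\[ \mathrm{tr}\!\left(T_{D_\gamma} T_{D_\gamma^t} \,\Big|\, H^1(Y_0)\right) \;=\; \frac{|H|^2}{|H \cap a^{-1}Ha|^2} \sum_i m_i \,|\lambda_i|^2, \]
and substituting this together with $2 d_1 d_2 = 2|H|^2/|H \cap a^{-1}Ha|^2$ into the formula from the second step produces the desired identity.

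The main obstacle is the third step: carefully verifying the normalization constant relating the cohomological operator $(p_2)_* p_1^*$ to the double-coset operator $\pi_H a^* \pi_H$. Concretely, one needs to track how fibers of the two projections correspond to right $H$-cosets in $HaH$, and to check that the overlap of those cosets with the single coset $Ha$ produces precisely the factor $|H|/|H \cap a^{-1}Ha|$ rather than something off by a unit. Once this is in place, the remaining ingredients are purely formal applications of Lemmas \ref{L:DecompositionOfH1X} and \ref{L:RMConstants}.
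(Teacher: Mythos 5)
Your argument is correct, and it packages the computation differently from the paper. The paper works upstairs on $X\times X$: it uses $(\phi\times\phi)^*[D_\gamma]=\sum_{g\in HaH}[\wt D_g]$, computes each $\wt D_{g_1}\cdot\wt D_{g_2}$ by the Lefschetz fixed point theorem as $2\chi_{G'}(g_2^{-1}g_1)-\chi_X(g_2^{-1}g_1)$, and then recognizes the double-coset average as the operator $(\pi_Ha^{-1}\pi_H)(\pi_Ha\pi_H)$, the prefactor $|H|^2/|H\cap a^{-1}Ha|^2$ coming from the covering degree together with the count of ways to write $g\in HaH$ as $h_1ah_2$. You instead stay on $Y_0\times Y_0$ and invoke the classical trace formula for correspondences on a curve, so the constant $2$ arises as $2d_1d_2$ from the bidegree $\bigl(|H|/|H\cap a^{-1}Ha|,\,|H|/|H\cap a^{-1}Ha|\bigr)$, and the remaining term is the trace of the normalized Hecke operator; both routes then finish identically via Lemma \ref{L:DecompositionOfH1X} and Lemma \ref{L:RMConstants}, and the $a$ versus $a^{-1}$ (transpose) ambiguity is harmless since only $|\lambda_i|^2$ enters. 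Your normalization constant, the step you flag as the main obstacle, is indeed correct, though your coset-counting heuristic is stated a bit loosely: the clean statement is that on $H$-invariant classes $T_{D_\gamma}=\sum_j (ah_j)^*$ where $HaH=\bigsqcup_j Hah_j$ and $h_j$ runs over $(H\cap a^{-1}Ha)\backslash H$, and since each element of $HaH$ equals $h_1ah_2$ in exactly $|H\cap a^{-1}Ha|$ ways this is $\frac{|H|}{|H\cap a^{-1}Ha|}\pi_Ha^*\pi_H$; alternatively it follows from $(\phi\times\phi)^*[D_\gamma]=\sum_{g\in HaH}[\wt D_g]$ and the projection formula. In comparison, your route avoids the explicit double sum over pairs of elements of $HaH$ by quoting the general K\"unneth/trace formula for correspondences, while the paper's route needs only the ordinary Lefschetz fixed point theorem on the (possibly disconnected) cover $X$; the two are otherwise the same computation, and both ultimately rest on the eigenvalue statement of Lemma \ref{L:RMConstants} applied to the decomposition $H^1(Y_0)\cong\bigoplus_i (M_i^H)^{m_i}$.
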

\begin{proof}
If $M$ is a compact manifold, the Lefschetz fixed point theorem computes the intersection number of the diagonal embedding with the graph of a function in $M \times M$. In our case, it says, $\wt{D}_g \cdot \wt{D}_{\mathrm{id}} = 2\chi_{G'}(g)-\chi_X(g)$ where $\chi_X$ is the character of $H^1(X)$ as a $G$-representation. If $g, h \in G$, then
\[ \wt{D}_g \cdot \wt{D}_h = \wt{D}_{h^{-1}g} \cdot \wt{D}_{\mathrm{id}} = 2\chi_{G'}(h^{-1}g) - \chi_X(h^{-1}g) \]
We can compute the self-intersection number of $D_{HgH}$ with itself by pulling back its homology class to $X\times X$ and counting intersections there. Of course, each intersection on $Y_0 \times Y_0$ has $|H|^2$ preimages in $X \times X$, since the map from $X \times X$ to $Y_0 \times Y_0$ is a branched cover of degree $|H|^2$. Therefore,
\[ D_\gamma \cdot D_\gamma = \frac{1}{|H|^2} \left( \sum_{g_1 \in \gamma} \wt{D_{g_1}} \right) \cdot \left( \sum_{g_2 \in \gamma} \wt{D_{g_2}}\right) =  \frac{1}{|H|^2} \sum_{g_1, g_2 \in HaH} \left( 2\chi_{G'}(g_2^{-1}g_1) - \chi_X(g_2^{-1}g_1) \right)  \]
Let $\chi_{H'}$ be the $H$-character given by the $H$-action on $\bC[H/H']$. Since $G'$ contains the normal subgroup $A$, $\chi_{G'}(h_1a^{-1}h_2h_3ah_4) = \chi_{H'}(h_1h_2h_3h_4)$ for any $h_1, \hdots, h_4 \in H$. Set $H_a := H \cap a^{-1}H$ and note that $h_1ah_2 = h_3ah_4$ if and only if there is some $h \in H_a$ so that $h_3 = h_1ah^{-1}a^{-1}$ and $h_4 = hh_2$. So,
\[ D_\gamma \cdot D_\gamma = \frac{1}{|H|^2|H_a|^2} \sum_{h_1, \hdots, h_4 \in H} \left( 2 \chi_{H'}(h_1 \hdots h_4) - \chi_X(h_1a^{-1}h_2h_4^{-1}ah_3^{-1}) \right).  \]
Simplifying, and setting $\pi_H := \frac{1}{|H|} \sum_{h \in H} h$,
\[ D_\gamma \cdot D_\gamma = \frac{|H|^2}{|H_a|^2} \left( 2\langle \chi_{H'}, \chi_{triv} \rangle_H - \chi_X((\pi_Ha^{-1}\pi_H)(\pi_H a \pi_H)) \right) \]
By Frobenius reciprocity, $\langle \chi_{H'}, \chi_{triv} \rangle_H = 1$. Moreover, given the decomposition in Lemma \ref{L:DecompositionOfH1X}, Lemma \ref{L:RMConstants} shows that for any $a' \in A$, the elements of $(M_i^H)^{m_i}$ are eigenvectors of $\pi_H a' \pi_H$ of eigenvalue $\frac{1}{|\Phi_i|} \sum_{\chi \in \Phi_i} \chi(a')$. Therefore, the trace of $(\pi_Ha^{-1}\pi_H)(\pi_H a \pi_H)$ on $H^1(X_0)$ is $\sum_i m_i \left| \frac{1}{|\Phi_i|} \sum_{\chi \in \Phi_i} \chi(a)\right|^2$ and the result follows.
\end{proof}

\begin{proof}[Proof of Lemma \ref{L:FiniteFibers}:] By assumption there is some $a \in A$ so that $|HaH| = |H|^2$ and $2 < \sum_i m_i \left| \frac{1}{|\Phi_i|} \sum_{\chi \in \Phi_i} \chi(a) \right|^2$, so the result follows immediately from Corollary \ref{C:FiniteFibersCriterion} and Lemma \ref{L:IntersectionNumber}.
\end{proof}

% \begin{proof}[Proof of Theorem \ref{T:FiberDimension}:] By Lemma \ref{L:FiberDimensionFor1}, the generic fiber dimension of $\mathrm{For}_1$ is the fiber dimension of $\sigma'$ and by Theorem \ref{T:FiniteFibersCriterion} and Lemma \ref{L:IntersectionNumber} the generic fiber dimension of $\mathrm{For}_2$ is zero. 
% \end{proof}

\section{Further estimates of $d_2$}

We continue to use the notation of Sections \ref{S:HHConstructions} and \ref{S:FirstEstimatesOfD2} and to assume that Assumption \ref{A:AbelianNormal} holds. Recall that $\cM_i$ is the extension of $\cV_i^{1,0}$ over $\cB_\sigma$. Fix $(f: Y \ra \bP^1) \in \cB_\sigma$ and let $Y_0$ be its unique genus $g_Y$ component. Let $\sigma'$ be the monodromy of $f$ restricted to $Y_0$. Let $X_0$ be the Galois closure of $Y_0$ and set $X := \mathrm{Ind}_{G'}^G(X_0)$. We will identify $M_i^{m_i}$ (resp. $(M_i^H)^{m_i}$ with a subset of $H^1(X)$ (resp. $H^1(Y_0)$). Fix a special branch point $b$ with monodromy $a \in A$. Let $p \in X$ be a preimage of $b$ for which there is a neighborhood where $a$ acts, in suitable local coordinates $z$ that send $p$ to $0$, by $a(z) = \zeta_{|a|} z$. 

Fix $\chi \in \Phi_i$ and let $\cW_\chi$ be the subbundle of $\Omega \cA_\sigma^G$ consisting of $A$-eigenforms in $M_i^{m_i}$ of character $\chi$. For each integer $k$, let $\cW_{\chi, k}$ be the sublocus of $\cW_\chi$ where the $1$-forms vanish to order at least $d_{i,k}$ at each point in $G \cdot p$. It may be useful to refer to Definition \ref{D:OrderOfVanishing} for notation.

\begin{lem}\label{L:OrderOfVanishing2}
Let $\cL_k$ be the locus of $\cM_i$ where the $1$-form vanishes to order at least $d_{i,k}$ on $f^{-1}(b)$. If $\cW_{\chi, k}$ is nonempty, then 
%If $\cW_{\chi, k}$ is nonempty and $1 \leq k \leq L(a)-1$, then 
\[ \dim \cW_{\chi,k-1} - \dim \cW_{\chi,k} \leq |\{ h \in H | \chi(hah^{-1}) = \zeta_{|a|}^{d_{i,j}+1} \text{ for some $0 \leq j \leq k-1$}\}|.  \]
%Similarly, if $\cW_{\chi, k}$ is nonempty and $k \geq L(a)$, then,
%\[ \dim \cW_{\chi,k} - \dim \cW_{\chi,k-1} \leq |H|.  \]
Moreover, $\dim \cM_i - \dim \cL_k \leq \dim \cW_{\chi} - \dim \cW_{\chi,k}$
%
% then
% \begin{equation*}
% \mathrm{codim}(\cL_i) \leq 
%     \begin{cases}
%         \sum_{j=0}^{k-1} (k-j+1) \dim E_j(a) & \text{if } 1 \leq k \leq L(a)-1\\
%         |k-L(a)+1||H| + \sum_{j=0}^{L(a)-2} (L(a)-j-1) \dim E_j(a) & \text{otherwise } 
%     \end{cases}
% \end{equation*} 
%
% If $A$ is normal and abelian, then the codimension of the locus of $\cM_i$ where the order of vanishing of the $1$-form on every point in $f^{-1}(b)$ is at least $d_{i,k}$ is at most $\sum_{j=0}^{k-1} (k-j+1) \dim E_j(a)$ if $1 \leq k \leq L(a)-1$. If $k \geq L(a)$, then the codimension is $|k-L(a)+1||H| + \sum_{j=0}^{L(a)-2} (L(a)-j-1) \dim E_j(a)$. 
\end{lem}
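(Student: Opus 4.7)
The plan rests on two local observations: that an $A$-eigenform has constant vanishing order along each $A$-orbit inside $G \cdot p$, and that the gaps in $\Lambda_i(a)$ satisfy $d_{i,k}-d_{i,k-1} \leq |a|$, since $d_{i,k-1}+|a|$ also lies in $\Lambda_i(a)$.

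For the first inequality, I would begin by identifying $G \cdot p$ with $G/\langle a \rangle$ as a $G$-set and observing that, because $A$ is normal with $G = AH$, the $A$-orbits on this set are in natural bijection with $H$: the orbit through $q_h := h \cdot p$ corresponds to $h \in H$. At $q_h$, the stabilizer is $\langle hah^{-1}\rangle$, acting in local coordinates by multiplication by $\zeta_{|a|}$, and any $\omega \in \cW_\chi$ satisfies $(hah^{-1})^*\omega = \chi(hah^{-1})\omega$. By Lemma \ref{L:VanishingOnX}, this forces the order of vanishing of $\omega$ at $q_h$ to lie in the arithmetic progression $\{d_{i,j_h} + n|a|\}_{n \geq 0}$, where $j_h$ is the unique index with $\chi(hah^{-1}) = \zeta_{|a|}^{d_{i,j_h}+1}$. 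A quick computation using $(a')^*\omega = \chi(a')\omega$ for $a' \in A$ shows that the vanishing order of $\omega$ is constant across each $A$-orbit, so the conditions cutting out $\cW_{\chi,k}$ inside $\cW_{\chi,k-1}$ reduce to one condition per $h \in H$. The gap estimate implies that the interval $[d_{i,k-1}, d_{i,k})$ meets each progression at most once, and it does not meet the one indexed by $h$ when $j_h \geq k$ (the progression then starts at $d_{i,j_h} \geq d_{i,k}$). Hence only orbits with $j_h \leq k-1$ contribute, each contributing at most one codimension, which yields the claimed bound.

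For the second inequality, I would use the fiberwise identification $\cM_i \cong \cW_\chi$ given by projection onto the $\chi$-eigenspace inside $M_i^{m_i}$: by Lemma \ref{L:RMConstants} an $H$-invariant form $\hat\omega \in (M_i^H)^{m_i}$ is determined by its $\chi$-component $\hat\omega_\chi$, with the other components recovered as $\hat\omega_{\chi'} = h^*\hat\omega_\chi$ for any $h \in H$ conjugating $\chi$ to $\chi'$. Since $\phi$ is unramified over $f^{-1}(b)$ (as noted in the proof of Corollary \ref{C:MainOrderOfVanishingComputation}), the condition that $\omega$ vanishes on $f^{-1}(b)$ to order $\geq d_{i,k}$ is equivalent to $\phi^*\omega = \sum_{\chi' \in \Phi_i} \hat\omega_{\chi'}$ vanishing on $G \cdot p$ to that order. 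If $\hat\omega_\chi$ alone vanishes on $G \cdot p$ to order $\geq d_{i,k}$, then each translate $h^*\hat\omega_\chi$ vanishes on $h^{-1}(G \cdot p) = G \cdot p$ to the same order, and hence so does their sum $\phi^*\omega$. This gives an inclusion $\cW_{\chi,k} \hookrightarrow \cL_k$ under the identification, so $\dim \cL_k \geq \dim \cW_{\chi,k}$, and combined with $\dim \cM_i = \dim \cW_\chi$ the inequality follows.

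The main subtlety lies in the first inequality: one must count codimension per $A$-orbit (there are $|H|$ of them) rather than per point of $G \cdot p$ (of which there are $|G|/|a|$), and the per-orbit bound of $1$ relies on the periodicity estimate $d_{i,k}-d_{i,k-1} \leq |a|$. The nonemptiness hypothesis on $\cW_{\chi,k}$ ensures the dimensions are well-defined. The second inequality is comparatively formal once the identification $\cM_i \leftrightarrow \cW_\chi$ is in hand.
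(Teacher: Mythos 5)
Your proof is correct and follows essentially the same route as the paper: the same bijection between $\cW_\chi$ and $\cM_i$ (via Lemma \ref{L:RMConstants} and the unramifiedness of $\phi$ over $f^{-1}(b)$) for the second inequality, and the same per-$A$-orbit count of vanishing conditions via Lemma \ref{L:VanishingOnX} for the first. You simply make explicit some steps the paper leaves implicit, such as the gap bound $d_{i,k}-d_{i,k-1}\leq |a|$ and the indexing of the $A$-orbits in $G\cdot p$ by $H$.
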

\begin{proof}
% If $\eta \in H^{1,0}(X)$ is an $A$-eigenform of character $\chi$, and $\alpha \in A$, $h \in H$,
% \[ \alpha^*(h^* \eta) = h^*(h^{-1}\alpha h)^* \eta = \chi(h^{-1}\alpha h) \left( h^* \eta \right) \]
% In other words, $h^* \eta$ remains an $A$-eigenform, but the corresponding character is $(h \cdot \chi)(\alpha) := \chi(h^{-1}\alpha h)$.

%Fix $\chi \in \Phi_i$ and let $\cW_\chi$ be the subbundle of $\Omega \cA_\sigma^G$ consisting of $A$-eigenforms in $M_i^{m_i}$ of character $\chi$.  

By Lemma \ref{L:RMConstants}, the map from the $A$-eigenforms in $M_1^{m_1}$ of character $\chi$ to $(M_1^H)^{m_1}$ given by sending a form $\eta$ to $\sum_{h \in H} h^* \eta$ is a bijection. This bijection identifies $\cW_\chi$ with $\cM_i$ in such a way that $\cW_{\chi, k}$ is sent into $\cL_k$. Hence, we have $\dim \cM_i - \dim \cL_k \leq \dim \cW_{\chi} - \dim \cW_{\chi,k}$.

%Let $p \in X$ be a point so that $\wt{f}(p) = b$ and so that $a$ is the monodromy of a small positively oriented loop around $b$ when lifted to an arc around $p$. It suffices to show that the sublocus $\cL_{\chi, k}$ of $\cW_\chi$ where the forms vanish to order at least $d_{i,k}$ at each point in $G \cdot p$ has the indicated codimension.

% Let $\cW_{\chi}$ be the holomorphic $1$-forms in $\wt{\cM_1}$ that are $A$-eigenforms with character $\chi$. There is a bijection from $\cW_{\chi}$ to the $H$-invariant forms given by sending $\eta$ to $\sum_{h \in H} h^* \eta$. 

Since $G = AH$, $G \cdot p = \bigcup_{h \in H} Ah \cdot p$. Notice that $A$-eigenforms vanish to the same order at every point in an $A$-orbit. By Lemma \ref{L:VanishingOnX}, if $\eta \in H^{1,0}(X)$ is an $A$-eigenform of character $\chi$, then its order of vanishing on elements of $Ah \cdot p$ is congruent to $d_h$ mod $|a|$ where $d_h$ is an integer so that $\zeta_{|a|}^{d_h+1} = \chi(hah^{-1})$. The number of $A$-orbits in $G \cdot p$ on which forms in $\cW_{\chi,k-1}$ vanish to order exactly $d_{i,k-1}$ is at most $|\{ h \in H | \chi(hah^{-1}) = \zeta_{|a|}^{d_{i,j}+1} \text{ for some $0 \leq j \leq k-1$}\}|$. Therefore, the estimate of $\dim \cW_{\chi, k-1} - \dim \cW_{\chi, k}$ holds as in the proof of Corollary \ref{C:OrderOfVanishing}.
\end{proof}

%%%%%%%%%%%%%%%%%%
%
% DO NOT DELETE
% SECTION DECOMISSIONED ON JUNE 11 2023
%
%%%%%%%%%%%%%%%%%%
% Finally, we will prove one final result about orders of vanishing in the context where $A$ is abelian and normal. In this case, we will let $A'$ be the subgroup of $A$ consisting of elements fixed by $H$. This is precisely the center of $G$. 

\begin{lem}\label{L:SymmetryDescent}
If $A'$ is the subgroup of $A$ consisting of elements fixed by $H$, then $A'$ acts on $Y$ by automorphisms and elements of $(M_i^H)^{m_i}$ are $A'$-eigenforms.
%
% Suppose that $A$ is abelian and that $\Phi_1$ is a collection of characters so that $M_1$ is the span of $\Phi_1$. Let $A'$ be the subgroup whose elements are fixed by every element of $H$. Then $A'$ is a subgroup of the automorphisms of $Y$ and for each $i$ there is a character $\rho_i: A' \ra \bC^\times$ so that every element of $\cM_i$ is a $\rho$-eigenform.
%
% Suppose that there is a branch point $b \in \bP^1$ whose monodromy is $a \in A$. The preimages of $b$ on $Y$ correspond to cosets $A/\langle a \rangle$. Two preimages of $b$ that both belong to $A'/\langle a \rangle$ have the same order of vanishing.
\end{lem}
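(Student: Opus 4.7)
The proof will have two parts, corresponding to the two assertions.

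For the first assertion, the plan is to show that $A'$ commutes with $H$ inside $G$. By definition $A'$ consists of $a' \in A$ with $ha'h^{-1} = a'$ for every $h \in H$, which is precisely the statement that $A'$ and $H$ commute elementwise. Since $A \subseteq G$ acts on $X$, so does $A'$, and because this $A'$-action commutes with the $H$-action used to form $Y = X/H$, the $A'$-action descends to an action of $A'$ on $Y$ by automorphisms. (One should also observe $A' \cap H = \{\mathrm{id}\}$ since $A \cap H = \{\mathrm{id}\}$, so the descent is faithful.)

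For the second assertion, the plan is to exploit the fact, established in Lemma \ref{L:SpannedByCharacters}, that $M_i$ is spanned by the $H$-orbit $\Phi_i$ of characters of $A$. The key computation is that all characters in $\Phi_i$ take the same value on any $a' \in A'$: if $\chi_1, \chi_2 \in \Phi_i$ with $\chi_2 = \chi_1^h$ for some $h \in H$, then
\[ \chi_2(a') = \chi_1(h^{-1}a'h) = \chi_1(a'), \]
using $h^{-1}a'h = a'$. Consequently, every element of $M_i^{m_i} \subseteq H^1(X)$, being a sum of $A$-eigenforms whose characters lie in $\Phi_i$, is an $A'$-eigenform of eigenvalue $\chi(a')$ (for any $\chi \in \Phi_i$). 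Identifying $H^1(Y) \subseteq H^1(X)$ with the $H$-invariant subspace via pullback along $X \to Y$, and using that the $A'$-action on $Y$ was defined precisely so that this pullback is $A'$-equivariant, we conclude that the elements of $(M_i^H)^{m_i}$ are $A'$-eigenforms with the same eigenvalue.

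The only subtle point, and the main thing to verify carefully, is the compatibility between the descended $A'$-action on $Y$ and the identification $H^1(Y) \cong H^1(X)^H$. This is automatic from the fact that the quotient map $X \to Y$ intertwines the two $A'$-actions by construction, so $A'$-eigenforms on $X$ that are $H$-invariant correspond to $A'$-eigenforms on $Y$ with the same eigenvalue. There are no further obstacles.
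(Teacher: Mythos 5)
Your proof is correct and follows essentially the same route as the paper: use that $A'$ commutes with $H$ (so the $A'$-action on the Galois closure $X$ descends to $Y=X/H$), note that all characters in the $H$-orbit $\Phi_i$ restrict to the same character on $A'$, and conclude that $M_i^{m_i}$, hence its $H$-invariant part, consists of $A'$-eigenforms. The only difference is that you spell out the compatibility of the identification $H^1(Y)\cong H^1(X)^H$ with the $A'$-actions, which the paper leaves implicit; this is a harmless elaboration, not a divergence.
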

\begin{proof}
Since $A'$ commutes with $H$, it descends to an action on $Y$ and there is a character $\rho: A' \ra \bC^\times$, which is the restriction of any character in $\Phi_i$ to $A'$. Since $M_i^{m_i}$ has a basis of $A$-eigenforms with characters in $\Phi_i$, the action on $a \in A'$ on any element of this basis is by multiplication by $\rho(a)$. Therefore, $a \in A'$ acts on $M_i^{m_i}$ by multiplication by $\rho(a)$ and the same holds on the $H$-invariant subspace. 
%
% an element $a \in A'$ acts on $M_i^{m_i}$ by multiplication by $\rho(a)$ and hence the same is true for the action of $a$ on $(M_i^H)^{m_i}$. 
%
% . Since $A'$ commutes with $H$, for each $M_i$, there is a character $\rho_i: A' \ra \bC^\times$ so that the restriction of any character in $\Phi_i$ to $A'$ is $\rho_i$. Since elements of $\cV_i$ are spanned by $A$-eigenforms, elements $a \in A'$, act on $\cV_i$ by scaling by $\rho_i(a)$. The action on the $H$-invariant subspace of $\cV_i$ is the same. 
\end{proof}

\section{An extension of Theorem \ref{T:Main2}}\label{S:Algorithms}

In this section, we make the following assumption.

%We continue to the use the notation of Section \ref{S:HHConstructions}. We will make the following assumption.

\noindent \textbf{Assumption.} Let $A$ be a finite abelian group. Let $H$ be a subgroup of $\mathrm{Aut}(A)$. Set $G = A \rtimes H$. The characters of $A$ can be written as a union of $H$-orbits $\bigcup_i \Phi_i$. Let $M_i$ be the span of $\Phi_i$ in $\bC[A]$. Let $\sigma = (g_1, \hdots, g_n)$ be a generating set of $G$ so that $g_1\cdot \hdots \cdot g_n = 1$ and so $g_1, \hdots, g_\ell$ belong to $A$. If $\Phi_j$ only contains the trivial character, set $m_j = 0$. Otherwise, set
\[ m_j := (n-2) \dim M_j - \sum_{i=1}^n \dim \mathrm{Fix}_{g_i}(M_j) \quad \text{and} \quad \rho := \sum_{i=1}^\ell \dim \mathrm{Fix}_{g_i}(M_1) \]
Suppose that $\Phi_1$ is closed under complex conjugation, which implies that $r := \frac{m_1}{2}$ is an integer by Proposition \ref{P:RMConstants}. Let $\cM_1$ be as in Section \ref{S:HHConstructions} and define $\Lambda_i$ and $d_{i,k}$ as in Section \ref{S:FirstEstimatesOfD2}. If $g_Y$ is the genus of surfaces in $\cM_i$, then we will let $\pi: \cM_1 \ra \Omega \cM_{g_Y}$ be the map so that $\pi\left( \left( f: Y \ra \bP^1, \eta \right) \right) = (Y, \eta)$ where $f \in \cB_\sigma$ and $\eta$ is a holomorphic $1$-form on the unique genus $g_Y$ component of $Y$. 

%We will now define the following game.

\begin{defn}
The \emph{order of vanishing game} is the following single-player game. Set $E := n - r - \rho - 3$. Set $R := \bigcup_{i=1}^\ell A/\langle g_i \rangle$.
The points in $A/\langle g_i \rangle$ are called \emph{the points above $g_i$}. Let $A'$ be the set of elements of $A$ fixed by every element of $H$. We will now assign weights to the points in $R$ with $\Lambda_1(g_i)$ being the possible weights attached to points above $g_i$ (compare this to Corollary \ref{C:MainOrderOfVanishingComputation}). The game starts with all points in $R$ being assigned the smallest possible weights. The player starts with $E$ tokens and, on a turn, has two options. First, the player may choose an element $a \in A$ and an index $i$ and spend one token to increase the weight of all points in $aA'/\langle g_i \rangle$ to the next highest valid weight (compare this to Corollary \ref{C:OrderOfVanishing} and Lemma \ref{L:SymmetryDescent}). Second, if all points above $g_i$ have the same weight, $d_{1,k}$, the player may use $|\{ h \in H | \chi(hah^{-1}) = \zeta_{|g_i|}^{d_{1,j}+1} \text{ for some $0 \leq j \leq k$}\}|$-many tokens to increase the weights of these points to the next highest valid weight (compare this to Lemma \ref{L:OrderOfVanishing2}). The player can never have a negative number of tokens and the game ends when the player has zero tokens. We will say that the player wins if the sum of the weights is at least $-2 + \sum_i m_i$. If the player wins, the final weights are called the \emph{winning configuration of weights}. 
\end{defn}

Before proceeding, we recall the following fact, which follows by the Chevalley-Weil formula or by Wright\footnote{Wright only establishes the second claim when $m=4$, but the general case is identical.} \cite[Lemma 2.6 and Proposition 2.7]{W1}. See also Mirzakhani-Wright \cite[Lemma 6.1]{MirWri2}.

\begin{lem}\label{L:ChevalleyWeil}
Suppose that $\sigma'$ is a generating set of $A$ and $Y'$ is an $A$-regular cover of $\bP^1$ with monodromy given by $\sigma'$. Then the multiplicity with which a character $\chi$ of $A$ appears in $H^{1,0}(Y')$ is 
\[ m_\chi := -1 + \sum_{a \in \sigma'} \left( 1 - \frac{n_{\chi, a}}{|a|} \right) \]
where $n_{\chi, a}$ is the smallest positive integer so that $\chi(a) = \zeta_{|a|}^{n_{\chi,a}}$. 

Moreover, letting $z$ be a local coordinate on $\bP^1$, the $\chi$-isotypic component of $H^{1,0}(Y')$ consists of forms of the form
\[ \omega_p := \frac{p(z)}{(z-z_1)^{1-n_{\chi, a_1}/|a_1|} \cdot \hdots (z-z_m)^{1-n_{\chi, a_m}/|a_m|}} dz \]
where $p(z)$ is a polynomial of degree at most $m_\chi-1$. If the order of vanishing of $p(z)$ at $z_i$ is $o_i$, then the order of vanishing of $\omega_p$ at preimages of $z_i$ is $o_i|a_i|+n_{\chi, a_i}-1$.
\end{lem}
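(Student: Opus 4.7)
The plan is to explicitly construct the $\chi$-isotypic component of $H^{1,0}(Y')$ by writing down all its elements as pullbacks of meromorphic differentials on $\bP^1$ twisted by a multi-valued function with the right monodromy. Define
\[ F_\chi(z) := \prod_{i=1}^m (z-z_i)^{n_{\chi,a_i}/|a_i|-1}, \]
which is a multi-valued holomorphic function on $\bP^1 - \{z_1, \hdots, z_m\}$ whose monodromy around $z_i$ is multiplication by $\zeta_{|a_i|}^{n_{\chi, a_i}} = \chi(a_i)$; hence $F_\chi \cdot dz$ pulls back to a single-valued meromorphic $1$-form on $Y'$ that is a $\chi$-eigenform for the $A$-action. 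For any rational function $q(z)$ on $\bP^1$, $\omega_q := q(z) F_\chi(z) dz$ is therefore a meromorphic $\chi$-eigenform on $Y'$.

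Next, I would convert the holomorphicity of $\omega_q$ into a degree bound on $q$. After arranging via a M\"obius transformation that $\infty \notin \{z_1, \hdots, z_m\}$, a local computation near a preimage of $z_i$, using the local coordinate $w$ on $Y'$ with $z - z_i = w^{|a_i|}$ (so that $a_i$ acts by $w \mapsto \zeta_{|a_i|} w$), gives
\[ F_\chi(z)\,dz = |a_i|\, h_i(w)\, w^{n_{\chi, a_i}-1} dw \]
with $h_i$ holomorphic and nonzero at $w=0$. Since $n_{\chi, a_i} \geq 1$, holomorphicity at branch points forces $q$ to be holomorphic there, and if $q$ vanishes to order $o_i$ at $z_i$ then $\omega_q$ vanishes to order $o_i |a_i| + n_{\chi, a_i} - 1$ at each preimage, as claimed. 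Expanding $\omega_q$ in the coordinate $u = 1/z$ near $\infty$ (where the cover is unramified) and requiring holomorphicity yields $\deg q \leq -2 + \sum_i(1 - n_{\chi, a_i}/|a_i|) = m_\chi - 1$. This shows that $q \mapsto \omega_q$ embeds polynomials of degree at most $m_\chi - 1$ into the $\chi$-isotypic component of $H^{1,0}(Y')$.

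For surjectivity, observe that if $\omega$ is any $\chi$-eigenform in $H^{1,0}(Y')$, then $\omega/(F_\chi\, dz)$ is a meromorphic function on $Y'$ invariant under $A$, hence descends to a rational function $q$ on $\bP^1$. Reversing the local analyses forces $q$ to be a polynomial of degree at most $m_\chi - 1$, so the map is a bijection. This simultaneously establishes the explicit description of the $\chi$-isotypic component of $H^{1,0}(Y')$, the order-of-vanishing formula, and the multiplicity formula $\dim = m_\chi$. The formula $m_\chi = -1 + \sum_i(1-n_{\chi, a_i}/|a_i|)$ can alternatively be cross-checked against Lemma \ref{L:H1Character} by summing multiplicities of $\chi$ and $\overline{\chi}$ and using $n_{\overline{\chi}, a_i} = |a_i| - n_{\chi, a_i}$ when $\chi(a_i) \neq 1$.

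The main obstacle is bookkeeping: one must keep the local monodromy conventions consistent (which power of $\zeta_{|a_i|}$ appears when one chooses a preimage of $z_i$), and verify that the surjectivity argument works uniformly over all $z_i$ simultaneously rather than one branch point at a time. Once the multi-valued function $F_\chi$ is set up with the correct monodromy and the preimages of $z_i$ are chosen compatibly with the convention $a_i(w) = \zeta_{|a_i|} w$, everything reduces to routine local computations in a single variable, and no global Riemann--Roch input is needed.
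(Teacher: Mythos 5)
Your proof is correct, and it is essentially the argument behind the paper's own treatment: the paper gives no proof of this lemma, deferring to the Chevalley--Weil formula and to Wright \cite[Lemma 2.6 and Proposition 2.7]{W1}, and your explicit construction of the $\chi$-eigenforms as $q(z)F_\chi(z)\,dz$ with the local vanishing computation at branch points and the degree bound from holomorphicity over $\infty$ is exactly the standard computation those references carry out. The only point worth noting is the implicit assumption that $\chi$ is nontrivial (for the trivial character the stated $m_\chi$ equals $-1$ while the true multiplicity is $0$), which is how the lemma is applied in the paper, and your convention check (that $a_i$ acting by $w\mapsto\zeta_{|a_i|}w$ at the chosen preimage makes $\omega_q$ a $\chi$-eigenform vanishing to order $o_i|a_i|+n_{\chi,a_i}-1$) is consistent with the paper's conventions in Definition \ref{D:ConstructionGluingMap} and Lemma \ref{L:VanishingOnX}.
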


\begin{thm}\label{T:Algorithm2}
Suppose that the order-of-vanishing game determined by $\sigma$ and $\Phi_1$ is winnable and that $\kappa$ is the winning configuration of weights. Suppose that for each $i$, the points above $g_i$ have weight $\kappa_i$ in the winning configuration. Suppose too that, for some $a \in A$ so that $|HaH| = |H|^2$,
 \[  2 < \sum_i m_i \left| \frac{1}{|\Phi_i|} \sum_{\chi \in \Phi_i} \chi(a)\right|^2.  \]
Suppose that $\sigma' = (g_1, \hdots, g_\ell, g_{\ell+1}g_{\ell+2}, \hdots, g_{n-1}g_{n-2}) = (a_1, \hdots, a_m)$ is a generating set of $A$ satisfying
\[ -2 + \sum_i m_i = (m-2)|A| - \sum_{g \in \sigma'} \frac{|A|}{|g|}. \]
Suppose finally that for some $\chi \in \Phi_1$, there is a partition $(o_1, \hdots, o_\ell)$ of $m_\chi-1$ so that $o_i|a_i| + n_{\chi, a_i} - 1 \geq \kappa_i$ for all $1 \leq i \leq \ell$. 
%
%  Suppose that there is a sequence $\ell+1 = \iota_1 < \iota_2 < \hdots < \iota_N = n$ so that $a_{\iota_j} := g_{\iota_{j-1}} g_{\iota_{j-1}+1} \hdots g_{\iota_j - 1}$ is an element of $A$ for all $j$, $\sigma' := (g_1, \hdots, g_\ell, a_1, \hdots, a_N)$ generates $A$, and 
% \[ \sum_i m_i = (|\sigma'|-2)|A| - \sum_{g \in \sigma'} \frac{|A|}{|g|}. \]
% Suppose finally that the nonemptiness game determined by $\sigma'$ and $(\kappa_1,\hdots, \kappa_\ell,0,\hdots,0)$ is winnable. 
%
If $\cM := \cM_1 \cap \pi^{-1}(\cH(\kappa))$, then $\pi(\cM)$ is a rank $r$ rel $\rho$ invariant subvariety whose field of definition is the extension of $\bQ$ generated by $\left\{ \sum_{\chi \in \Phi_1} \chi(a) : a \in A \right\}$.
\end{thm}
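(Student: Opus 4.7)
My plan is to verify the hypotheses of Proposition \ref{P:HHConstructionCriterion} for the index $i=1$ and the stratum $\cH(\kappa)$. Proposition \ref{P:RMConstants} immediately supplies Hodge compatibility of $\cV_1$, its definition over $\bR$ (since $\Phi_1$ is closed under complex conjugation), the claimed field of definition, and the fact that $n_1=1$. So the dimension inequality I need reduces to $d_1+d_2\leq n-r-\rho-3 =: E$, and the remaining tasks are (i) show $\cM$ is nonempty, (ii) show forms in $\cM$ only vanish at preimages of special branch points, (iii) bound $d_1$, and (iv) bound $d_2$.

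For nonemptiness: the identity $-2+\sum_i m_i=(m-2)|A|-\sum_{g\in\sigma'}|A|/|g|$ is Riemann--Hurwitz, so $\sigma'$ (which generates $A$) determines a connected $A$-regular cover $Y_0\to\bP^1$ of genus $g_Y$. Applying Lemma \ref{L:ChevalleyWeil} with the chosen $\chi\in\Phi_1$ and the partition $(o_1,\ldots,o_\ell)$ of $m_\chi-1$ gives a holomorphic $1$-form $\omega_p\in H^{1,0}(Y_0)$ with $A$-character $\chi$ whose order of vanishing at each preimage of the $i$-th special branch point is $o_i|g_i|+n_{\chi,g_i}-1\geq\kappa_i$. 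Lemma \ref{L:TreeFromMerge} combined with Corollary \ref{C:BoundaryFibers} (since $H\cap A=\{1\}$, so $V_1^{H\cap G'}=V_1$) identifies $\omega_p$ with a point of the fiber of $\cM_1$ over a boundary stratum of $\cB_\sigma$. A degree count closes the argument: since any holomorphic $1$-form on $Y_0$ has degree $2g_Y-2$, and the weights at preimages of special branch points already contribute $\sum_i\kappa_i|A|/|g_i|\geq 2g_Y-2$ by the winning condition, equality is forced, the form has no other zeros, and it lies in $\cH(\kappa)$; this both shows $\cM\neq\emptyset$ and confirms the vanishing-locus hypothesis of Proposition \ref{P:HHConstructionCriterion}.

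For the dimension bounds I would then invoke Proposition \ref{P:D1Estimate} directly. The tuple $\sigma'=(g_1,\ldots,g_\ell,g_{\ell+1}g_{\ell+2},\ldots)$ is by construction a respectful merge of $\sigma$ whose generated subgroup contains $A$ and whose genus equals $g_Y$, and the hypothesis on some $a\in A$ with $|HaH|=|H|^2$ satisfying the stated inequality is precisely what Proposition \ref{P:D1Estimate} requires. It then yields $d_1\leq|\sigma|-|\sigma'|-n_{comb}(\sigma')=n-(\ell+(n-\ell)/2)-(n-\ell)/2=0$. For $d_2$ I would interpret the order-of-vanishing game as an itemized codimension budget: the ``spend one token'' move is justified by Corollary \ref{C:OrderOfVanishing} together with Lemma \ref{L:SymmetryDescent}, which reduces the per-orbit codimension to orbits of the $A'$-action; the ``spend many tokens'' bulk move is justified by Lemma \ref{L:OrderOfVanishing2}, whose token cost matches the bound $|\{h\in H\mid \chi(hah^{-1})=\zeta_{|g_i|}^{d_{1,j}+1}\text{ for some }0\leq j\leq k\}|$ on $\dim\cW_{\chi,k-1}-\dim\cW_{\chi,k}$. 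A winning play raises the generic configuration of weights to $\kappa$ while spending at most $E$ tokens, so $\cM_1\cap\pi^{-1}(\cH(\kappa))$ has codimension at most $E$ in $\cM_1$ and $d_2\leq E$.

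With $d_1=0$ and $d_2\leq E$ in hand, the inequality $(n+r-3)-d_1-d_2\geq 2r+\rho$ follows, and Proposition \ref{P:HHConstructionCriterion} delivers the rank $r$, rel $\rho$ invariant subvariety with the field of definition specified by Proposition \ref{P:RMConstants}. The main obstacle I foresee is the $d_2$ step: one must carefully check that each game move corresponds bijectively to an independent codimension constraint on $\cM_1$, that the bulk move's token cost really equals (not merely upper-bounds) the codimension it contributes when invoked, and that the requirement in the game that all points above $g_i$ share a weight before a bulk move is allowed is compatible with the actual geometry of admissible covers in $\cB_\sigma$ where special branch points are not permitted to collide.
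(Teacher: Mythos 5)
Your proposal is correct and follows essentially the same route as the paper's proof: it reduces to Proposition \ref{P:HHConstructionCriterion} via Proposition \ref{P:RMConstants}, obtains $d_1=0$ and nonemptiness from Proposition \ref{P:D1Estimate} (your use of Lemma \ref{L:ChevalleyWeil}, Lemma \ref{L:TreeFromMerge}, and Corollary \ref{C:BoundaryFibers} simply unpacks Remark \ref{R:MoreGeometricAssumptions} and Lemma \ref{L:StrataNonemptiness}), and reads the order-of-vanishing game as the bound $d_2\leq E$ via Corollary \ref{C:OrderOfVanishing}, Lemma \ref{L:SymmetryDescent}, and Lemma \ref{L:OrderOfVanishing2}. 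The only difference is expository: your degree-count for the winning configuration makes explicit what the paper delegates to the cited proposition and remark.
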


\begin{rem}\label{R:MoreGeometricAssumptions}
The first displayed equation may be replaced by the weaker condition that $d_1 = 0$. The second displayed equation is equivalent to the condition that $\sigma$ and $\sigma'$ determine branched covers of the same genus. The condition on the partition of $m_\chi - 1$ is equivalent (by Lemma \ref{L:ChevalleyWeil}) to saying that if $Y'$ is an $A$-regular cover with monodromy $\sigma'$, then there is a nonzero element of $\cH$ that belongs to the $\chi$-isotypic component of $H^{1,0}(Y')$. This condition is necessary to apply Proposition \ref{P:D1Estimate} and to justify the use of Lemma \ref{L:OrderOfVanishing2} in the order of vanishing game to estimate $d_2$. These equivalent characterizations of the assumptions will be used in the sequel. They do not appear in the statement of the theorem to emphasize that the assumptions are purely combinatorial.
\end{rem}

%Suppose that $Y'$ is any $A$-regular cover with monodromy given by $\sigma'$. If $\chi$ is any character of $A$, then let $H^{1,0}(Y')_\chi$ be the $\chi$-isotypic component of $H^{1,0}(Y')$. By Lemma \ref{L:ChevalleyWeil} and by assumption, $H^{1,0}(Y')_\chi$ contains a form in $\cH$ for some $\chi$. (Note that this a necessary condition to apply Lemma \ref{L:OrderOfVanishing2} in the order of vanishing game to compute $d_2$.)

\begin{proof}
By Propositions \ref{P:CohomologyDecomposition} and \ref{P:RMConstants}, $2g_Y-2 = \sum_i m_i$. By Proposition \ref{P:RMConstants}, $\cV_1$ is Hodge-compatible and defined over $\bR$ since $\Phi_1$ is closed under complex conjugation. Recall that $d_1$ and $d_2$ are defined in Proposition \ref{P:HHConstructionCriterion}. Set $\cH := \cH(\kappa)$ and set $\cM := \cM_1 \cap \pi^{-1}(\cH)$. By Proposition \ref{P:D1Estimate}, $d_1 \leq n-m-n_{comb}(\sigma') = 0$.  The order of vanishing game shows that $d_2 \leq E = n - r - \rho - 3$. In other words,  $2r + \rho \leq (n+r-3)-d_1-d_2$. By Proposition \ref{P:HHConstructionCriterion}, $\pi(\cM)$, is a rank $r$ rel $\rho$ invariant subvariety. The field of definition is the indicated one by Proposition \ref{P:RMConstants}.
\end{proof}

\section{Dihedral Hurwitz-Hecke Constructions}

% Let $D_m = \langle r, t | r^m = t^2 = (tr)^2 = 1 \rangle$ be the dihedral group of order $2m$. Its \emph{standard permutation action} is on $\bZ/m$ where $r(x) = x+1$ and $t(x) = -x$. Let $A = \langle r \rangle$, which acts transitively on $\bZ/m$. Let $\chi_k$ be the character that sends $r$ to $\zeta_m^k$ where $\zeta_m = \mathrm{exp}\left( \frac{2 \pi i}{m} \right)$. The decomposition of $\bC[A]$ into $D_m$-irreducible representations is given by $M_0 = \{ \chi_0 \}$ and $M_i = \{ \chi_{\pm i} \}_{1 \leq i \leq \lfloor \frac{m}{2} \rfloor}$. If $M_i$ is two-dimensional, then $\langle M_i, \chi_{r^i} \rangle = 2$ if $m \mid ik$ and $0$ otherwise. If $g \notin A$, then $\langle M_i, \chi_g \rangle = 1$. 

Let $G = \langle r, t | t^2 = r^N = (tr)^2 = 1 \rangle$ be the dihedral group of order $2N$, where $N$ is a positive integer that we will soon specify. We will also soon specify generating sets $\sigma$ of size $n$ that we will use to form a Hurwitz-Hecke construction. Let $A = \langle r \rangle$ and $H = \langle t \rangle$. Suppose that $\sigma$ contains $n_I$ elements that do not belong to $A$. Let $\chi_k$ be the $A$-character that sends $r$ to $\zeta_N^k$ for $k \in \bZ/N\bZ$. The decomposition of $\bC[A]$ into $G$-irreducible representations is given by $M_i = \mathrm{span}(\Phi_i)$ for $1 \leq i \leq \lfloor \frac{N}{2} \rfloor$ where $\Phi_i := \{ \chi_{\pm i} \}$. The following is essentially a summary of definitions in this specific case.

%When $N \ne 2$, we notice that $\gamma := HrH$ has $|H|^2$ many elements and 

% Suppose that $\sigma$ contains $n_A$ elements of $A$ and $n_I$ involutions not in $A$. If $N$ is even, then we distinguish between $n_{I, e}$ involutions of the form $r^{2k}t$ and the $n_{I,o}$ involutions of the form $r^{2k+1}t$. Finally, suppose that the elements of $\sigma$ in $A$ are $(r^{a_1}, \hdots, r^{a_{n_A}})$ and that they have orders $(\delta_1, \hdots, \delta_{n_A})$. When $N$ is odd,
% \[ 2g - 2 = \sum_i \frac{N}{\delta_i}(\delta_i-1) + n_I \frac{N-1}{2} - 2N  = \left( \sum_i \frac{N}{\delta_i}(\delta_i-1) + n_I \frac{N}{2} - 2N \right) - \frac{n_I}{2} \]
% % \[ 2g - 2 = \sum_i \frac{N}{\delta_i}(\delta_i-1) + n_I \frac{N-1}{2} - 2N  = \left( n_A + \frac{n_I}{2} - 2 - \sum_i \delta_i^{-1} \right)N - \frac{n_I}{2}\]
% and when $N$ is even
% \[ 2g - 2 = \sum_i \frac{N}{\delta_i}(\delta_i-1) + n_{I,e} \frac{N-2}{2} + n_{I,o} \frac{N}{2} - 2N = \left( \sum_i \frac{N}{\delta_i}(\delta_i-1) + n_I \frac{N}{2} - 2N \right) - n_{I,e} \]
% % \[ 2g - 2 = \sum_i \frac{N}{\delta_i}(\delta_i-1) + n_{I,e} \frac{N-2}{2} + n_{I,o} \frac{N}{2} - 2N = \left( n_A + \frac{n_I}{2} - 2 - \sum_i \delta_i^{-1} \right)N - n_{I,e} \]

\begin{lem}\label{L:DihedralCriterion}
When $M_k$ is two-dimensional, $m_k = 2(n-2)-n_I-2n_{A,k}$ where $n_{A,k}$ is the number of elements in $\sigma$ that belong to $A$ and that are sent to $0$ when multiplied by $k$. Moreover, if $N > 2$, $\gamma = HrH$ has $|H|^2$ many elements and
\[ \sum_i m_i \left| \frac{1}{|\Phi_i|} \sum_{\chi \in \Phi_i} \chi(a)\right|^2 =   \sum_{k=1}^{\lfloor \frac{N}{2} \rfloor} m_k \cos^2 \left( \frac{2 \pi k}{N} \right). \]
This sum is strictly greater than $2$ if either of the following hold:
\begin{enumerate}
    \item $N > 4$, $N \ne 6$, and $m_k \geq 4$ for all $k$ coprime to $N$, or
    \item $N > 8$, $N \ne 10$, and $m_k \geq 2$ for all $k$ coprime to $N$.
\end{enumerate}
%For $N = 6$, the sum is $\frac{m_1}{4} + \frac{m_2}{4} + m_3$. 
\end{lem}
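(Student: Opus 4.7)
The plan is to handle the three assertions in sequence. For the first, apply Equation~\ref{E:Multiplicity} together with Frobenius reciprocity to rewrite it as
\[ m_k = 2(n-2) - \sum_{i=1}^n \dim \mathrm{Fix}_{g_i}(M_k). \]
Using the basis $\{v_{\chi_k}, v_{\chi_{-k}}\}$ of $M_k$ that diagonalizes the $A$-action (and on which $t$ acts by swapping the two vectors), a direct computation shows that if $g_i = r^j \in A$, then $g_i$ acts diagonally with eigenvalues $\zeta_N^{\pm jk}$, so $\dim \mathrm{Fix}_{g_i}(M_k) = 2$ exactly when $jk \equiv 0 \pmod{N}$ (i.e., $r^j$ contributes to $n_{A,k}$) and is $0$ otherwise. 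If instead $g_i \notin A$, then $g_i$ is an involution acting on $M_k$ by an anti-diagonal matrix of determinant $-1$, so it has eigenvalues $\pm 1$ and $\dim \mathrm{Fix}_{g_i}(M_k) = 1$. Summing yields $n_I + 2n_{A,k}$, which gives the formula.

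For the second assertion, I would compute $HrH = \{r, rt, tr, trt\}$; using $trt = r^{-1}$ this equals $\{r, r^{-1}, rt, tr\}$, and these four elements are pairwise distinct precisely when $N > 2$, so $|HrH| = 4 = |H|^2$. For the character sum, Lemma~\ref{L:RMConstants} identifies the coefficient attached to a two-dimensional orbit $\Phi_k = \{\chi_k, \chi_{-k}\}$ as $\tfrac{1}{2}(\zeta_N^k + \zeta_N^{-k}) = \cos(2\pi k/N)$, while for $N$ even the one-dimensional orbit at $k = N/2$ contributes $\chi_{N/2}(r) = -1 = \cos(\pi)$, which also has the form $\cos(2\pi k/N)$ at $k = N/2$. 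Squaring yields the claimed identity.

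For the third assertion, I would bound the full sum from below by retaining only the terms with $k$ coprime to $N$; the remaining $m_k$ contribute nonnegatively since they are cohomological multiplicities. Pairing coprime residues via $k \mapsto N-k$ and applying $\cos^2 x = \tfrac{1}{2}(1 + \cos 2x)$ gives
\[ \sum_{\substack{1 \leq k \leq \lfloor N/2 \rfloor \\ \gcd(k,N)=1}} \cos^2\!\left(\frac{2\pi k}{N}\right) = \frac{\varphi(N) + c_N(2)}{4}, \]
where $c_N(2) := \sum_{k \in (\bZ/N)^\times} \zeta_N^{2k}$ is the Ramanujan sum, given explicitly by $c_N(m) = \mu(N/d)\varphi(N)/\varphi(N/d)$ with $d = \gcd(N,m)$. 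Consequently, under hypothesis (1) the total sum is at least $\varphi(N) + c_N(2)$, and under hypothesis (2) at least $\tfrac{1}{2}(\varphi(N) + c_N(2))$. It then remains to verify that $\varphi(N) + c_N(2) > 2$ for $N > 4, N \neq 6$, and that $\varphi(N) + c_N(2) > 4$ for $N > 8, N \neq 10$. The main obstacle will be this final finite case check, which splits according to whether $N$ is odd, $N \equiv 2 \pmod{4}$, or $4 \mid N$, and requires careful bookkeeping with $\mu$ and $\varphi$; the small failing values are exactly those captured by the two sets of exclusions in the hypotheses.
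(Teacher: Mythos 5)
Your proposal is correct, and it supplies more than the paper does: the paper gives no proof of this lemma at all, introducing it with the remark that it is ``essentially a summary of definitions,'' so your direct verification is exactly the intended content. Part one matches the paper's framework (Equation~\ref{E:Multiplicity} plus Frobenius reciprocity, with rotations contributing fixed-space dimension $2$ or $0$ and reflections contributing $1$ via the anti-diagonal involution of determinant $-1$), and the double-coset count and the $\cos^2$ identity, including the one-dimensional $k=N/2$ term, are right. For the only step you leave as ``remains to verify'': it does close. With $d=\gcd(N,2)$ one has $c_N(2)=\mu(N)$ for $N$ odd, $c_N(2)=\mu(N/2)$ for $N\equiv 2 \pmod 4$, and $c_N(2)=\mu(N/2)\varphi(N)/\varphi(N/2)\in\{0,\pm 2\mu(N/2)\}$ for $4\mid N$; since $\varphi(N)\geq 4$ for odd $N\geq 5$ and the analogous small bounds in the even cases, one checks $\varphi(N)+c_N(2)\leq 2$ only for $N\in\{3,4,6\}$ and $\varphi(N)+c_N(2)\leq 4$ only for $N\leq 8$ or $N=10$, which are precisely the excluded values in hypotheses (1) and (2). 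Two harmless quibbles: the coefficient $\tfrac{1}{|\Phi_i|}\sum_{\chi\in\Phi_i}\chi(r)$ is read off directly from the statement and does not need Lemma~\ref{L:RMConstants}; and your appeal to nonnegativity of the discarded $m_k$ should cite that they are multiplicities in $H^1(X)$ (Proposition~\ref{P:CohomologyDecomposition}), which you implicitly do.
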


We will now consider polygons whose angles are specified by a nondecreasing sequence of positive integers $a_1 \leq \hdots \leq a_m$ so that $N := \frac{1}{m-2} \sum a_i$ is an integer. These integers determine vertices of angle $\frac{a_i \pi}{N}$. The unfolding of this polygon is a cyclic cover of the sphere with monodromy data $\sigma' := (a_1, \hdots, a_m)$ where these elements are taken to be in $\bZ/N$. To each of the subsequent polygons that we consider we will associate a generating set $\sigma$ of $G$ that is formed by replacing each element of $\sigma'$ according to the rule that $a_i$ is replaced by $(r^{a_i}t, t)$ in the case that $a_i = 1$ or $a_i = 2$ and $N$ is even, and by $(r^{a_i})$ otherwise. Notice that $\sigma$ and $\sigma'$ determine branched covers of $\bP^1$ of the same genus. 

%Let $b$ be the number in $\{1, \hdots, \lfloor \frac{N}{2} \rfloor \}$ so that the unfolding of the polygon belongs to $\cM_b$ and to the stratum $\cH$. 

%We will suppose that only $a_k$ is a number other than $1$ (resp. $1$ or $2$) when $N$ is odd (resp. even). In this case, the unfolding has $(a_k, N)$ zeros of order $\frac{a_k}{(a_k, N)} - 1$. Let $\cH$ be the stratum containing the unfolding. 

%This implies that $ba_k \equiv \pm \frac{a_k}{(a_k, N)}$ mod $\frac{N}{(a_k,N)}$.

% To each of the subsequent polygons that we consider we will associate a generating set $\sigma$ of $G$ given by associating to $a_i$, $(r^{a_i}t, t)$ in the case that $a_i = 1$ or $a_i = 2$ and $N$ is even, and $(r^{a_i})$ otherwise. Notice that $\sigma$ and $\sigma'$ determine branched covers of $\bP^1$ of the same genus. Let $b$ be the number in $\{1, \hdots, \lfloor \frac{N}{2} \rfloor \}$ so that the unfolding of the polygon belongs to $\cM_b$. This implies that $ba_k \equiv \pm \frac{a_k}{(a_k, N)}$ mod $\frac{N}{(a_k,N)}$.

The unfolding of the polygon determines a Riemann surface $Y$ and holomorphic one-form $\eta$ so that $\eta$ belongs to the $\chi_1$-isotypic component of $H^{1,0}(Y)$ and to $\cH := \cH(\kappa)$. In the notation of Lemma \ref{L:BoundaryConstruction}, $(Y, \eta)$ belongs to $\cM_1$. Set $\cM := \cM_1 \cap \pi^{-1}(\cH)$. In the following examples in Table \ref{T:EMMW}, there will be one special branch point and the order of vanishing of a holomorphic $1$-form in $\cM_1$ on a point its fiber, called the \emph{special fiber}, will belong to the set $V$. If any additional conditions are placed on the integers in the unfolding, they are listed in the ``Conditions" column. 

%%%%%%%%%%%%%%
%
% DO NOT DELETE
%
%%%%%%%%%%%%%%%
% The unfolding being in the chi_1-isotypic component claim is essentially Mirzakhani-Wright Lemam 6.4.
%%%%%%%%%%%%%%
%
% END DO NOT DELETE
%
%%%%%%%%%%%%%%%

\begin{table}[H]
\begin{tabular}{ |p{2.25cm}||p{2.25cm}|p{3cm}|p{3cm}|  }
 %\hline
 %\multicolumn{4}{|c|}{Country List} \\
 \hline
Polygon & $\cH$ &  $V$ & Conditions \\
 \hline
 $(1,1,\ell)$     & $\cH\left( \ell-1 \right)$ & $\{N - \ell - 1, \ell - 1\}$ mod $N$ & $\ell$ \text{ odd and $\ell >5$} \\
$(1,2,\ell)$   &  $\cH\left( \ell -1 \right)$ & $\{N - \ell - 1, \ell - 1\}$ mod $N$ & $(\ell,6) = 1$  \text{ and } $\ell > 7$   \\
$(1,1,\ell)$   & $\cH\left( \left( \frac{\ell}{2}-1 \right)^2 \right)$ &  $\{\frac{N}{2}-\frac{\ell}{2}-1, \frac{\ell}{2}-1 \}$ mod $\frac{N}{2}$ & $4 \mid \ell$ \text{ and } $\ell > 8$    \\
(1,1,1,7)   & $\cH(6)$    &   $\{1,2\}$ mod $5$ & \\
(1,1,1,9)   & $\cH(2^3)$    &  $0$ mod $2$ & \\
%(1,1,2,8)   & $\cH(3^2)$    & $\{0,1\}$ mod $3$  & \\
(1,1,2,12)   & $\cH(2^4)$    &   $0$ mod $2$ & \\
(1,2,2,11)   & $\cH(10)$    &  $\{2,4\}$ mod $8$ & \\
(1,2,2,15)   & $\cH(2^5)$    &   $0$ mod $2$ & \\
%(1,2,2,11)   & $\cH(10)$    & $\{2,4\}$ mod $8$ & \\
%(1,1,2,2,12)   & $\cH(1^6)$    &   $0$ mod $1$ & \\
 \hline
\end{tabular}
    \caption{Examples with one special fiber}
    \label{T:EMMW}
\end{table}

The following results are originally due, with different proofs, to Eskin-McMullen-Mukamel-Wright \cite{EMMW}, Veech \cite{V}, and Ward \cite{Ward}.

% Finally, we note that by Mirzakhani-Wright \cite[Lemma 7.1]{MirWri2}, the unfolding of a locus of $k$-gons always has rank at least $k-2$ provided that none of the vertices have angles that are multiples of $\pi$ and not all of them are multiples of $\frac{\pi}{2}$. This allows us to show that the invariant subvarieties that we construct must coincide with the orbit closure of the unfoldings that they contain. Finally, we note that in all cases in the sequel, 
%  \[  2 < \sum_i m_i \left| \frac{1}{|\Phi_i|} \sum_{\chi \in \Phi_i} \chi(a)\right|^2.  \]
% even in situations not explicitly covered by Lemma \ref{L:DihedralCriterion}. We will omit this computation.

\begin{prop}\label{P:EMMW}
For each example in Table \ref{T:EMMW} corresponding to an $m$-gon, $\cM$ is a rank $m-2$ rel zero invariant subvariety.  Moreover, $\cM$ is the orbit closure of the generic unfolding of the polygon with the indicated angles.
\end{prop}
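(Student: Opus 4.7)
The plan is to verify the combinatorial hypotheses of Theorem \ref{T:Algorithm2} for each entry of Table \ref{T:EMMW}, conclude that $\pi(\cM)$ is a rank $m-2$, rel zero invariant subvariety of $\cH(\kappa)$, and then identify it with the orbit closure of the generic unfolding using irreducibility of $\cM$ together with Eskin--Mirzakhani--Mohammadi. I would carry this out in three steps.

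For each polygon $(a_1,\ldots,a_m)$, the sequence $\sigma'=(r^{a_1},\ldots,r^{a_m})$ is the $A$-monodromy of the cyclic unfolding and $\sigma$ is the $G$-generating set obtained via the substitution rule preceding Lemma \ref{L:DihedralCriterion}. In every row of the table only the large angle $a_m$ survives the substitution, so $\ell=1$ and the unique special generator is $g_1=r^{a_m}$, matching the single special fiber. Letting $s$ be the number of substituted angles, $n=m+s$ and $n_I=2s$, so by Lemma \ref{L:DihedralCriterion} $m_1=2(n-2)-n_I-2n_{A,1}=2(m-2)$ since $a_m\not\equiv 0\pmod N$; this gives the desired rank $r=m_1/2=m-2$. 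By Lemma \ref{L:RelMultiplicity}, $\rho=\dim\mathrm{Fix}_{r^{a_m}}(M_1)=0$ in every row, since the eigenvalues $\zeta_N^{\pm a_m}$ of $r^{a_m}$ on $M_1$ are nontrivial by direct inspection. The genus identity $-2+\sum_i m_i=(m-2)|A|-\sum_{g\in\sigma'}|A|/|g|$ required by Theorem \ref{T:Algorithm2} is then Riemann--Hurwitz for the $A$-cover from $\sigma'$. Finally, the inequality $2<\sum_k m_k\cos^2(2\pi k/N)$ follows from Lemma \ref{L:DihedralCriterion}(2) whenever $N>8$ and $N\neq 10$, which covers all three triangle families, and by direct numerical computation for the five quadrilateral rows with $N\in\{5,6,8,8,10\}$.

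Next I would play the order-of-vanishing game. By Corollary \ref{C:MainOrderOfVanishingComputation} the allowed orders on the special fiber lie in $\Lambda_1(r^{a_m})$, whose residues modulo $|r^{a_m}|$ are precisely the set $V$ in the table; the target $\kappa$ assigns the larger residue to each of the $|A/\langle r^{a_m}\rangle|$ points, and this count matches the number of zeros of $\cH(\kappa)$. The budget $E=n-r-\rho-3$ equals $1$ for every triangle row and $2$ for every quadrilateral row, and a direct case-check shows that either Option 1 suffices, when $A'\not\subseteq\langle r^{a_m}\rangle$ so that $A'$-cosets separate distinct points of the special fiber, or Option 2 applies at cost exactly $|H|=2$, when $A'\subseteq\langle r^{a_m}\rangle$, using that $hr^{a_m}h^{-1}$ is independent of $h\in H$ and $\chi_1(r^{a_m})$ equals the required root of unity. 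In every row the final weight sum equals $2g_Y-2=-2+\sum_i m_i$, confirming that $\kappa$ is the winning configuration and that the target stratum is $\cH(\kappa)$. The partition condition on $m_{\chi_1}-1$ reduces to a short Chevalley--Weil calculation via Lemma \ref{L:ChevalleyWeil}: in every row one computes $m_{\chi_1}=1$ for the triangle entries and $m_{\chi_1}=2$ for the quadrilateral entries, and with the single part $o_1=m_{\chi_1}-1$ the inequality $o_1|r^{a_m}|+n_{\chi_1,r^{a_m}}-1\geq\kappa_1$ holds as an equality.

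With the hypotheses verified, Theorem \ref{T:Algorithm2} yields that $\pi(\cM)$ is a rank $m-2$, rel zero invariant subvariety of $\cH(\kappa)$. Since $\cM$ was chosen as a component of $\cM_1\cap\pi^{-1}(\cH(\kappa))$ it is irreducible, so $\pi(\cM)$ is an irreducible invariant subvariety, and the unfoldings of smooth polygons with the given angles form a Zariski-open subset of $\pi(\cM)$ because they correspond to the smooth locus $\mathrm{Hur}_\sigma\subseteq\cB_\sigma$ of the Hurwitz space. By Eskin--Mirzakhani--Mohammadi, this irreducible invariant subvariety is the $\GL$-orbit closure of any of its generic points, and a generic unfolding is such a generic point; therefore $\pi(\cM)$ is the orbit closure of the generic unfolding. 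The main obstacle is the bookkeeping in the order-of-vanishing game, specifically confirming that Option 2 is available in each row where it is needed and saturates the budget exactly, together with the direct numerical verification of the character-sum inequality in the five sporadic small-$N$ rows.
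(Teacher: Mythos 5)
Your verification of the hypotheses of Theorem \ref{T:Algorithm2} (the values $m_1=2(m-2)$, $\rho=0$, $E=s-1$, the character-sum inequality via Lemma \ref{L:DihedralCriterion} and direct computation for the small-$N$ quadrilaterals, and the token bookkeeping in the order-of-vanishing game) follows essentially the same route as the paper, and those steps are fine modulo the case-by-case details you defer; the paper replaces your Chevalley--Weil partition check by the geometric observation that the unfolding form $\eta$ lies in $\cH$ and in the $\chi_1$-isotypic component, via Remark \ref{R:MoreGeometricAssumptions}, but these are interchangeable.

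The genuine gap is in your last step, the ``Moreover'' claim. You assert that the unfoldings of polygons with the given angles form a Zariski-open subset of $\pi(\cM)$ ``because they correspond to the smooth locus $\mathrm{Hur}_\sigma\subseteq\cB_\sigma$,'' and then apply Eskin--Mirzakhani--Mohammadi to a generic unfolding. Both halves of this fail. The unfoldings are the cyclic covers with monodromy $\sigma'$, i.e.\ they arise from the \emph{boundary} of $\cB_\sigma$ where the substituted pairs of branch points have collided (this is exactly how they enter via Lemma \ref{L:StrataNonemptiness} and Proposition \ref{P:D1Estimate}); a generic point of $\mathrm{Hur}_\sigma$ is a dihedral cover branched over $n>m$ points and is not a polygon unfolding. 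Moreover the unfolding locus is a proper, lower-dimensional subvariety of $\pi(\cM)$ (for the quadrilaterals it has roughly half the dimension of the rank-two, rel-zero variety $\pi(\cM)$), so a generic unfolding is not a generic point of $\pi(\cM)$, and EMM alone does not preclude its orbit closure from being a proper invariant subvariety of $\cM$ (a Teichm\"uller curve or rank-one locus inside it). What is missing is a lower bound on the orbit closure of the generic unfolding: the paper gets this from Mirzakhani--Wright \cite[Lemma 7.1]{MirWri2}, which shows that the smallest invariant subvariety containing the generic unfolding of an $m$-gon with these angles already has rank $m-2$ and rel zero; combined with the containment of the unfoldings in $\cM$ and the equality of rank and rel, this forces the orbit closure to coincide with $\cM$. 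Without this (or an equivalent) input, the identification of $\cM$ with the orbit closure of the generic unfolding is not established.
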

\begin{proof}
By Lemma \ref{L:DihedralCriterion}, if $k$ is coprime to $N$ then $m_k = 2$ if $m =3$ and $m_k = 4$ if $m = 4$. By Lemma \ref{L:DihedralCriterion}, $2 < \sum_i m_i \left| \frac{1}{|\Phi_i|} \sum_{\chi \in \Phi_i} \chi(a)\right|^2$. This observation, the fact that $\eta$ belongs to both $\cH$ and the $\chi$-isotypic component of $H^{1,0}(Y)$, and Remark \ref{R:MoreGeometricAssumptions}, imply that all conditions in Theorem \ref{T:Algorithm2} are satisfied except possibly that $\kappa$ is a winning configuration for the order-of-vanishing game determined by $\sigma$ and $\Phi_1$.

For the triangles, $n = 5$, $r = 1$, $\rho = 0$, and we start the game with $E = 1$ token. For the first two triangles in the table, the conditions on $\ell$ imply that there is one point in $A/\langle r^\ell \rangle$ and we use the token to raise the weights on it from $N-\ell-1$ to $\ell-1$, showing that the game is winnable. For the third triangle in the table, $A' = \langle r^{N/2} \rangle$ and, the conditions on $\ell$, imply that $A'/\langle \ell \rangle = A / \langle \ell \rangle$. Therefore, we use one token to increase all weights from $N-\ell-1$ to $\ell-1$, thereby winning the game.

For the quadrilaterals, $n = 7, r=2$, $\rho = 0$, and we start with $E = 2$ tokens. The game is played with the points in $A/\langle a_4 \rangle$, which are assigned weights given by positive integers in $V$. For $(1,1,1,7)$ and $(1,2,2,11)$ there is a single point in the special fiber and we use both tokens to raise its order of vanishing by two weights. For $(1,1,1,9), (1,1,2,12)$, and $(1,2,2,15)$
we use $2$ tokens to raise the order of vanishing of all points in the special fiber by one weight.

We have shown that $\cM$ is a rank $m-2$ rel zero orbit closure that contains the unfolding of all polygons with corresponding angles given by the table. By Mirzakhani-Wright \cite[Lemma 7.1]{MirWri2}, the smallest such orbit closure has rank $m-2$ and rel zero and so we have the final claim.
\end{proof}

Unfortunately, for the polygons $(1,1,2,8)$ and $(1,1,2,2,12)$, whose generic unfoldings also have rank two orbit closures, we only have that $2 = \sum_i m_i \left| \frac{1}{|\Phi_i|} \sum_{\chi \in \Phi_i} \chi(a) \right|^2$. Therefore, we must rely on a different mechanism, specifically one due to Eskin-McMullen-Mukamel-Wright \cite[Proposition 4.19]{EMMW}, for guaranteeing that $d_1 = 0$.

\begin{lem}\label{L:NewD1Estimate}
In the case of $(1,1,2,8)$ and $(1,1,2,2,12)$, $d_1 = 0$.
\end{lem}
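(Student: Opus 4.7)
The plan is to revisit the argument of Proposition \ref{P:D1Estimate} and pinpoint where the bound $d_1 = 0$ needed to be strengthened. That argument reduces (via Lemma \ref{L:FiniteFibers} and Corollary \ref{C:FiniteFibersCriterion}) to finding a double coset $\gamma = HaH$ with $|\gamma| = |H|^2$ and $D_\gamma \cdot D_\gamma < 0$, and Lemma \ref{L:IntersectionNumber} converts the strict negativity into the condition $\sum_i m_i |\tfrac{1}{|\Phi_i|}\sum_{\chi \in \Phi_i}\chi(a)|^2 > 2$. For the two polygons in question this quantity equals exactly $2$, so Ellenberg's criterion fails and a substitute is required.

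First I would specialize to each case. For $(1,1,2,8)$ we have $N = 10$ (so $\Phi_1, \Phi_3$ are the characters with $k$ coprime to $N$); for $(1,1,2,2,12)$ we have $N = 6$ (so only $\Phi_1$ corresponds to $k$ coprime to $N$). Using Lemma \ref{L:DihedralCriterion} I would tabulate the multiplicities $m_k$, take $a = r$, and verify that the sum is exactly $2$, i.e.\ $D_\gamma\cdot D_\gamma = 0$. Since distinct irreducible components of $D_\gamma$ on $Y_0\times Y_0$ have nonnegative intersection, this also shows that the components are numerically proportional, so Ellenberg's argument degenerates into a one-parameter ambiguity rather than an infinite family.

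Next, I would invoke the alternative criterion of Eskin--McMullen--Mukamel--Wright \cite[Proposition 4.19]{EMMW} in place of Theorem \ref{T:FiniteFibersCriterion}. Their proposition handles precisely the borderline case by exploiting the Hodge structure of the $M_k$-isotypic components rather than only the self-intersection of a single Hecke correspondence: holomorphicity of the eigenform together with the Chevalley--Weil information (Lemma \ref{L:ChevalleyWeil}) forces extra vanishing on tangent vectors of the Hurwitz space that point along the fiber of $\mathrm{For}_2$. Concretely, one checks that the pullback of $\eta$ to $X$ lies in a sum of $A$-isotypic pieces whose Hodge filtration is sufficiently rigid to eliminate any nontrivial deformation of the covering map that fixes the underlying Riemann surface.

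The main obstacle is verifying the hypotheses of \cite[Proposition 4.19]{EMMW} in our combinatorial setup. This means (i) using Lemma \ref{L:ChevalleyWeil} with the monodromy $\sigma'$ associated to each polygon to identify precisely which characters contribute to $H^{1,0}$ and with which multiplicities, and (ii) matching that data with the hypotheses EMMW require on the holomorphic/anti-holomorphic split of the isotypic piece carrying $\eta$. Once these purely representation-theoretic checks go through for $N = 10$ and $N = 6$ respectively, \cite[Proposition 4.19]{EMMW} yields $d_1 = 0$ directly, and, as noted in Remark \ref{R:MoreGeometricAssumptions}, this is the only ingredient of Theorem \ref{T:Algorithm2} that was not already combinatorial.
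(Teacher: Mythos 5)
There is a genuine gap. Your proposal correctly diagnoses why Proposition \ref{P:D1Estimate} fails here (the quantity $\sum_i m_i\,\bigl|\tfrac{1}{|\Phi_i|}\sum_{\chi\in\Phi_i}\chi(a)\bigr|^2$ equals exactly $2$, so $D_\gamma\cdot D_\gamma=0$ and Ellenberg's criterion gives nothing), and it points to the right source, \cite{EMMW}. But from that point on the argument is deferred rather than given: ``verify the hypotheses of \cite[Proposition 4.19]{EMMW}'' and ``the Hodge filtration is sufficiently rigid to eliminate any nontrivial deformation of the covering map'' is precisely the content that needs to be proved, and you never say what those hypotheses are or why they hold for these two monodromy data. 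The actual mechanism (which the paper spells out, inspired by the same EMMW proposition) is concrete: normalize $f$ so that the special fiber is $f^{-1}(\infty)$; observe that the ramification order of $f$ at the special fiber equals the order of vanishing of $\eta$ there ($3$ for the quadrilateral, $1$ for the pentagon), so $f\eta$ is again a holomorphic $1$-form lying in the same piece $(M_1^H)^{m_1}$; since the holomorphic part of that piece is only $2$-dimensional, $\eta$ and $f\eta$ span it, so $f$ is recovered, up to M\"obius transformation, as the ratio of two eigenforms of the integral Hecke operator $HrH$ acting on $H^1(Y_0)$; and for a fixed $Y_0$ there are only countably many such integral operators, hence only countably many $f$, giving $d_1=0$. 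None of these steps — the holomorphicity of $f\eta$, the dimension count $\dim\bigl((M_1^H)^{m_1}\cap H^{1,0}\bigr)=2$, and the ratio-of-eigenforms recovery of $f$ with the countability of integral Hecke operators — appears in your proposal, and they cannot be skipped, since they are exactly what replaces the failed negativity of $D_\gamma\cdot D_\gamma$.

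A secondary but relevant error: for the quadrilateral $(1,1,2,8)$ one has $N=\tfrac{1}{m-2}\sum a_i=\tfrac{12}{2}=6$, not $10$ (both examples have $N=6$). Since your plan hinges on representation-theoretic bookkeeping — which characters $\Phi_k$ are relevant, the multiplicities $m_k$ from Lemma \ref{L:DihedralCriterion}, and the Chevalley--Weil data of Lemma \ref{L:ChevalleyWeil} — carrying it out with the wrong dihedral group would invalidate the very checks you propose to make.
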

\begin{proof}
We must show that, given $(Y_0, \eta) \in \cH$, there are only countably many choices (up to post-composition with a M\"obius transformation) for a branched cover $f: Y_0 \ra \bP^1$ so that $(f: Y_0 \ra \bP^1, \eta) \in \cM$. Let $f$ be such a map and, after possibly composing with a M\"obius transformation, we may suppose that the zero set of $\eta$, which is the special fiber, coincides with $f^{-1}(\infty)$. Note that $f$ is ramified to order $3$ (resp. $1$) at points in the special fiber for the quadrilateral (resp. pentagon). Since $\eta$ vanishes to order $3$ (resp. $1$) at these points, it follows that $f \eta$ is also a holomorphic $1$-form on $Y_0$ and that $f \eta$ and $\eta$ both belong to $(M_1^H)^{m_1}$. Notice that $H^1(Y_0) \cong (M_1^H)^4 \oplus (M_2^H)^4$ and that $H^{1,0}(Y_0) \cong (M_1^H)^2 \oplus (M_2^H)^2$. This implies that $\eta$ and $f\eta$ span the copy of $(M_1^H)^2$ in $H^{1,0}(Y_0)$. In particular, for $Y_0$ there is an integral linear map $A$, i.e. $HrH$, of $H^1(Y_0)$ to itself, that preserves the subspace $H^{1,0}(Y_0)$, and so that for any two holomorphic $1$-forms $\{\omega_1, \omega_2\}$ that form a basis of the $\frac{1}{2}$-eigenspace in $H^{1,0}(Y_0)$, $\frac{\omega_1}{\omega_2}$ is the same as $f$, up to postcomposition with a M\"obius transformation. Given $Y_0$ there are only countably many choices of $A$ and hence only countably many choices of $f$, as desired.
\end{proof}

\begin{table}[H]
\begin{tabular}{ |p{2.25cm}||p{2.25cm}|p{3cm}|  }
 %\hline
 %\multicolumn{4}{|c|}{Country List} \\
 \hline
Polygon & $\cH$ &  $V$ \\
 \hline
(1,1,2,8)   & $\cH(3^2)$    & $\{0,1\}$ mod $3$  \\
(1,1,2,2,12)   & $\cH(1^6)$    &   $0$ mod $1$  \\
 \hline
\end{tabular}
    \caption{The remaining Eskin-McMullen-Mukamel-Wright Examples}
    \label{T:EMMW2}
\end{table}

\begin{prop}\label{P:EMMW2}
For the polygons $(1,1,2,8)$ and $(1,1,2,2,12)$, $\cM$ is an invariant subvariety of rank two and rel zero (resp. two) that coincides with the orbit closure of the generic unfolding of a polygon with the specified angles. 
\end{prop}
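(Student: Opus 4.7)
The plan is to imitate the proof of Proposition \ref{P:EMMW}, invoking Theorem \ref{T:Algorithm2} together with Remark \ref{R:MoreGeometricAssumptions}: the latter permits replacing the first displayed inequality in Theorem \ref{T:Algorithm2} by the direct hypothesis $d_1 = 0$. The new ingredient supplying this equality is Lemma \ref{L:NewD1Estimate}, which handles exactly those polygons where the numerical criterion $2 < \sum_i m_i \left| \tfrac{1}{|\Phi_i|} \sum_{\chi \in \Phi_i} \chi(a) \right|^2$ degenerates to equality.

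First I would record the group-theoretic data. For $(1,1,2,8)$ one has $N = 6$, $n = 7$, $r = 2$, $\rho = 0$, and, after reordering, the special element is $g_1 = r^2$ of order $3$ in $A = \bZ/6$. For $(1,1,2,2,12)$ one has $N = 6$, $n = 9$, $r = 2$, $\rho = 2$, and the special element is $g_1 = 1$, reflecting the degenerate vertex of angle $2\pi$. In both cases $\Phi_1 = \{\chi_1, \chi_{-1}\}$ is closed under complex conjugation, so Proposition \ref{P:RMConstants} delivers Hodge compatibility and real definition of $\cV_1$. A Riemann-Hurwitz check confirms that $\sigma$ and the purely abelian monodromy $\sigma'$ obtained by collapsing each pair $(r^{a_i}t, t)$ determine covers of the correct genus $g_Y = 4$.

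Next I would verify that the order-of-vanishing game is winnable with $E = n - r - \rho - 3 = 2$ tokens in each case. For the quadrilateral, the special fiber has two points starting at weight $0 \in \Lambda_1(r^2) = \{0,1,3,4,\ldots\}$; since the $H$-fixed subgroup $A' = \{1, r^3\}$ meets both cosets of $\langle r^2 \rangle$, every Option-$1$ move raises both weights in tandem, and two tokens carry them through $0 \mapsto 1 \mapsto 3$. For the pentagon, all six points of the special fiber start at weight $0 \in \Lambda_1(1) = \bZ_{\geq 0}$; a single Option-$2$ move raises every weight to $1$ at total cost $|H| = 2$, since $\chi(hah^{-1}) = \chi(1) = 1$ for all $h \in H$. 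In each scenario the final weights match the stratum data in Table \ref{T:EMMW2}, and their sum equals $-2 + \sum_i m_i = 2g_Y - 2$.

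Combining the bound $d_2 \leq E$ from the game with $d_1 = 0$ from Lemma \ref{L:NewD1Estimate}, the dimension inequality of Proposition \ref{P:HHConstructionCriterion} is met with equality, so Theorem \ref{T:Algorithm2} (with the modification of Remark \ref{R:MoreGeometricAssumptions}) produces a rank-$2$ rel-$\rho$ invariant subvariety $\pi(\cM)$. To identify it with the orbit closure of the generic unfolding, I would reuse the Mirzakhani-Wright argument from Proposition \ref{P:EMMW}: the generic unfolding lies in $\pi(\cM)$, and \cite[Lemma 7.1]{MirWri2} forces any orbit closure containing it to already have rank $2$ (with the matching rel in the pentagon case), so equality must hold. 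The main obstacle is the order-of-vanishing game for the pentagon, where Option $1$ alone cannot raise all six weights within the two-token budget, and one must exploit Option $2$, justified by Lemma \ref{L:OrderOfVanishing2}, to lift every weight simultaneously.
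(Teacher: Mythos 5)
Your use of Lemma \ref{L:NewD1Estimate} to get $d_1=0$, the appeal to Remark \ref{R:MoreGeometricAssumptions} to bypass the failed numerical criterion, and your play of the order-of-vanishing game all match the paper's proof: for $(1,1,2,8)$ the fact that $A'=\langle r^3\rangle$ surjects onto $A/\langle r^2\rangle$ lets two Option-1 moves carry both weights through $0\mapsto 1\mapsto 3$, and for $(1,1,2,2,12)$ a single Option-2 move of cost $|H|=2$ raises all six weights from $0$ to $1$; the resulting application of Proposition \ref{P:HHConstructionCriterion} gives a rank two, rel $\rho$ invariant subvariety containing the unfoldings, exactly as in the paper.

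The gap is in the last step, the identification $\cM=\cN$ for the pentagon. You propose to run the Mirzakhani--Wright argument from Proposition \ref{P:EMMW} ``with the matching rel in the pentagon case,'' but \cite[Lemma 7.1]{MirWri2} does not apply to $(1,1,2,2,12)$: the vertex labelled $12$ has angle $12\pi/6=2\pi$, an integer multiple of $\pi$, which violates the hypotheses of that lemma --- this is precisely why the paper singles out the pentagon for a separate treatment. Without that lemma you only know that the orbit closure $\cN$ of the generic unfolding is contained in $\cM$; you have no lower bound on its rank or rel, so equality does not follow. The paper closes this by a geometric argument you would need to supply some version of: the locus of $(1,1,2,8)$ unfoldings sits in the boundary of the locus of $(1,1,2,2,12)$ unfoldings, so the rank two orbit closure $\cN'$ of the former lies in the boundary of $\cN$, forcing $\operatorname{rank}(\cN)=2$; and varying the lengths of the two edges meeting the $2\pi$-vertex produces a rel deformation of the unfolding whose derivative is not a multiple of a real rel deformation, forcing $\cN$ to have rel at least two, whence $\cN=\cM$. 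For the quadrilateral $(1,1,2,8)$ no angle is a multiple of $\pi$, the Mirzakhani--Wright lemma does apply, and your argument there is correct.
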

\begin{proof}
By Lemma \ref{L:NewD1Estimate}, $d_1 = 0$ and so, by Remark \ref{R:MoreGeometricAssumptions}, all conditions in Theorem \ref{T:Algorithm2} are satisfied except possibly that $\kappa$ is a winning configuration for the order-of-vanishing game determined by $\sigma$ and $\Phi_1$. The game begins with two tokens. 

For $(1,1,2,8)$, $A' = \langle r^3 \rangle$ and so $A'/\langle r^8 \rangle = A/\langle r^8 \rangle$. Therefore, we use both tokens to raise the weights of all points in the special fiber by two weights. For $(1,1,2,2,12)$, we can use both tokens to raise the weights of all points from $0$ to $1$. It now follows that $\cM$ is an invariant subvariety of rank two and rel $\rho$ that contains the indicated unfolding. 

By Mirzakhani-Wright \cite[Lemma 7.1]{MirWri2}, in the case of $(1,1,2,8)$, $\cM$ coincides with the orbit closure $\cN$ of the generic unfolding. However, in the case of $(1,1,2,2,12)$, the result does not apply (since one of the angles is a multiple of $\pi$), so we will sketch a geometric argument to show that $\cM = \cN$. Since the locus of unfoldings of $(1,1,2,2,12)$ pentagons contains the locus of unfoldings of $(1,1,2,8)$ quadrilaterals in its boundary we have that the orbit closure, $\cN'$, of this latter locus is contained in the boundary of $\cN$. Hence, $\cN$ has rank two since $\cN' \subseteq \cN \subseteq \cM$ where $\cN'$ and $\cM$ both have rank two. In the locus of polygons with angles determined by $(1,1,2,2,12)$, varying the lengths of the two edges that connect to the vertex of angle $2\pi$ provides a linear rel deformation of the unfolding whose derivative is not a multiple of a real rel deformation. This implies that $\cN$ has rel at least two and hence coincides with $\cM$.  
\end{proof}

\begin{proof}[Proof of Theorem \ref{T:Main}:] This is by Propositions \ref{P:BouwMoller}, \ref{P:BouwMoller2}, \ref{P:EMMW}, and \ref{P:EMMW2}
\end{proof}

\begin{prop}
Let $\ell$ be a positive integer and consider the triangles $(1,\ell, \ell)$ and $(1, \ell, \ell+1)$ and, when $\ell$ is odd, $(2, \ell, \ell)$ and $(2, \ell, \ell+2)$. For $N > 8$ and $N \ne 10$, the unfoldings of these triangles are Veech. 
% in Table \ref{T:Veech-Ward2} generate Teichm\"uller curves. 
%When $k > 0$ is an integer, the unfoldings of the following triangles generate Teichm\"uller curves: $(1,1,k)$, $(1,2,2k)$, $(1,k,k+1)$, $(1,k,k)$, $(2,2k-1,2k-1)$, or $(2,2k-1,2k+1)$.
\end{prop}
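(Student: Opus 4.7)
The plan is to apply Theorem \ref{T:Algorithm2} directly, imitating Proposition \ref{P:EMMW}. In each of the four families, take $G$ to be the dihedral group of order $2N$ with the generating set $\sigma$ assigned in Section 9, reorder so that the two large-angle generators $r^{a_2}, r^{a_3}$ (with $a_2, a_3 \in \{\ell, \ell+1, \ell+2\}$) come first and are declared special, and set $\sigma' = (r^{a_2}, r^{a_3}, r^{a_1})$ with $a_1 \in \{1, 2\}$. Throughout one has $n = 4$, $n_I = 2$, and $\sigma'$ generates $A = \bZ/N$ because either $a_1 = 1$ or $a_1 = 2$ with $\ell$ odd (so $\gcd(2, \ell) = 1$).

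First I would clear the hypotheses that are essentially substitution. For $k$ coprime to $N$ we have $k \cdot a_i \not\equiv 0 \pmod N$ for each special $a_i$, so $n_{A,k} = 0$ and Lemma \ref{L:DihedralCriterion} gives $m_k = 2(n-2) - n_I = 2$; in particular the rank $r = m_1/2$ equals $1$. Case (2) of the same lemma, available because $N > 8$ and $N \neq 10$, supplies $a \in A$ with $|HaH| = |H|^2$ and $\sum_i m_i \bigl| \frac{1}{|\Phi_i|} \sum_{\chi \in \Phi_i} \chi(a) \bigr|^2 > 2$. Since $0 < a_i < N$, one has $\chi_1(r^{a_i}) = \zeta_N^{a_i} \neq 1$ at each special generator, hence $\mathrm{Fix}_{r^{a_i}}(M_1) = 0$ and $\rho = 0$. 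A Riemann-Hurwitz calculation applied separately to $\sigma$ and $\sigma'$ verifies the genus-matching identity $-2 + \sum_i m_i = 2 g_Y - 2 = N - \sum_{g \in \sigma'} N/|g|$.

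The heart of the proof is the order-of-vanishing game. With $E = n - r - \rho - 3 = 0$ tokens, one must win immediately from the minimal character-compatible weights. Using Corollary \ref{C:MainOrderOfVanishingComputation} together with the explicit description of $\Lambda_1(r^{a_i})$, this minimum weight at a point above $r^{a_i}$ equals $a_i/\gcd(N, a_i) - 1$, which is precisely the order of vanishing of the unfolded 1-form at each preimage of the corresponding vertex. Summing $\gcd(N, a_i)$ copies of these minima over $i = 2, 3$ recovers $2g_Y - 2$ in every case, the small-angle vertex $a_1$ contributing no zero since its preimages each have total angle $2\pi$. The Chevalley-Weil formula (Lemma \ref{L:ChevalleyWeil}) gives $m_{\chi_1} = 1$ in all four families, so the partition condition collapses to $(o_1, o_2) = (0, 0)$, and the inequality $o_i |a_i| + n_{\chi_1, a_i} - 1 \geq \kappa_i$ becomes the equality $a_i/\gcd(N, a_i) - 1 = \kappa_i$ already established.

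With every hypothesis of Theorem \ref{T:Algorithm2} verified, the locus $\cM := \cM_1 \cap \pi^{-1}(\cH(\kappa))$ is a rank $1$, rel $0$ invariant subvariety, hence a Teichm\"uller curve by the argument of Proposition \ref{P:BouwMoller}. The unfolding of each triangle lies in $\cM$, so its $\mathrm{GL}(2, \bR)$-orbit closure coincides with $\cM$ and the triangle is Veech. The only genuinely delicate bookkeeping arises in the $(1, \ell, \ell+1)$ family, where a parity bifurcation in $\ell$ changes $\gcd(N, \ell)$ between $1$ and $2$ and shifts the stratum between $\cH(\ell - 1)$ (for $\ell$ odd) and $\cH((\ell/2 - 1)^2)$ (for $\ell$ even); both subcases run through the game identically and reach the same conclusion.
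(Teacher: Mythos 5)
Your proposal is correct and follows essentially the same route as the paper: the paper simply invokes Theorem \ref{T:Main2} (whose hypotheses are exactly your zero-token run of the order-of-vanishing game, i.e.\ $n-\tfrac{m_1}{2}-\rho_1-3=0$ and $2+\sum_i \frac{|A|}{|g_i|}\kappa_i=\sum_j m_j$) and verifies its assumptions as in the first paragraph of the proof of Proposition \ref{P:EMMW}. Routing instead through Theorem \ref{T:Algorithm2} only adds the Chevalley--Weil/partition check, and all of your numerical verifications ($m_k=2$ for $k$ coprime to $N$, $\rho=0$, $E=0$, minimal weights $a_i/\gcd(N,a_i)-1$ summing to $2g_Y-2$, $m_{\chi_1}=1$) are accurate, so the argument goes through.
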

\begin{proof}
This is immediate from Theorem \ref{T:Main2}. The first paragraph of the proof of Theorem \ref{P:EMMW} applies verbatim to verify that the assumptions in the theorem hold.
%
% Let $g$ be the genus of the unfolding. In these examples we will find that $n=4,r=1,\rho=0$. By Theorem \ref{T:Main2}, it suffices to produce a positive integer $k$ coprime to $N$ so that $2g-2 = (b,N)\kappa_b + (c,N)\kappa_c$ where $(\kappa_b+1)$ is the smallest positive integer congruent to $\pm kb$ mod $\frac{N}{(b,N)}$ and $\kappa_c$ is defined similarly. We choose $k = 1$ in all cases except for $(1,\ell, \ell+1)$ when $\ell$ is even, in which case we choose $k$ so that $k \equiv \pm \frac{\ell}{2}$ mod $\ell+1$. 
%
% The proof is essentially identical to that of Proposition \ref{P:VeechWard1}. 
%When $\ell$ is coprime to $N$ we take $\cM_1$ and the result is immediate by Lemma \ref{L:DihedralCriterion} as in the previous proof. When $\ell$ is not coprime to $N$ we also proceed as previously.
\end{proof}

\begin{rem}
A lengthy computation shows that, aside from the examples in this section and three Teichm\"uller curves in $\cH(2)$, the only invariant subvarieties that can be produced from Theorem \ref{T:Algorithm2} with $G$ dihedral are the quadratic doubles of $\cQ(5,-1)$ and $\cQ(9,-1)$.
\end{rem}

\bibliography{mybib}{}
\bibliographystyle{amsalpha}
\end{document}